\documentclass[12pt,reqno]{amsproc}
\setcounter{tocdepth}{4}
\setcounter{secnumdepth}{4}
\usepackage[pdfauthor   = {Sebastian\ Posur},
            pdftitle    = {Constructive\ category\ theory},
            pdfsubject  = {},
            pdfkeywords = {},
            bookmarks=true,
            bookmarksopen=true,
            pagebackref=true,
            hyperindex=true,
            colorlinks=true,
            linkcolor=blue,
            citecolor=blue,
            filecolor=blue,
            urlcolor=blue,
            ]{hyperref}

\usepackage[utf8]{inputenc} 
\usepackage[T1]{fontenc}
\usepackage{a4wide}
\usepackage{lmodern}
\usepackage[english]{babel}
\usepackage{mathrsfs}
\usepackage{etex}
\usepackage{mathtools}
\usepackage{latexsym}
\usepackage{amssymb}
\usepackage{amsthm}
\usepackage{amsmath}
\usepackage{caption}
\usepackage{mathabx}

\usepackage{colortbl}
\usepackage{tensor}

\usepackage[all]{xy}
\usepackage{verbatim}
\usepackage{listings}
\usepackage{fancyvrb}

\usepackage{graphicx}

\usepackage[dvipsnames]{xcolor}
\usepackage{accents} 
\usepackage{enumerate}
\usepackage{wrapfig}
\usepackage{tikz}
\usepackage{tikz-cd}
\usetikzlibrary{automata,shapes,arrows,matrix,backgrounds,positioning,plotmarks,calc,patterns,matrix,decorations.pathreplacing,decorations.pathmorphing,decorations.text,decorations.markings}
\usepackage[colorinlistoftodos,shadow]{todonotes}

\usepackage{multirow}
\usepackage{mdwlist}

\usepackage{stmaryrd}
\usepackage{mathdots} 

\usepackage{fancyvrb}

\usepackage[toc,page]{appendix}
\usepackage{float}

\usepackage{extarrows}
\usepackage{pdflscape}
\usepackage{rotating}
\usepackage{etex}
\newtheoremstyle{mytheoremstyle} 
    {5pt}                    
    {5pt}                    
    {\itshape}                   
    {\parindent}                           
    {\bf}                   
    {.}                          
    {.5em}                       
    {}  

\theoremstyle{mytheoremstyle}

\newtheorem{theorem}{Theorem}[section]

\newtheorem{lemma}[theorem]{Lemma}

\newtheorem{corollary}[theorem]{Corollary}

\newtheoremstyle{mytdefintionstyle} 
    {5pt}                    
    {5pt}                    
    {\rm}                   
    {\parindent}                           
    {\bf}                   
    {.}                          
    {.5em}                       
    {}  

\theoremstyle{remark}
\newtheorem{remark}[theorem]{Remark}

\theoremstyle{mytdefintionstyle}
\newtheorem{definition}[theorem]{Definition}

\newtheorem{example}[theorem]{Example}

\newtheorem{construction}[theorem]{Construction}

\newtheoremstyle{exmp_contd} 
{\topsep} {\topsep}%
{\upshape}
{}
{\bfseries}
{}
{ }
{\thmname{#1}\,\thmnumber{ #2}\thmnote{#3}\enspace(continued)}

\theoremstyle{exmp_contd}

\usepackage{xspace}
\definecolor{ExQ}{HTML}{0000FF}
\definecolor{Dec}{HTML}{E07B00}

\newcommand{\CapPkg}{\textsc{Cap}\xspace}

\DeclareMathOperator{\Proj}{Proj}

\newcommand{\AC}{\mathbf{A}}
\newcommand{\BC}{\mathbf{B}}

\newcommand{\firstproj}[2]{ \tensor*[_{#1}]{\begin{bmatrix}1\\0\end{bmatrix}}{_{#2}} }
\newcommand{\secondproj}[2]{ \tensor*[_{#1}]{\begin{bmatrix}0\\1\end{bmatrix}}{_{#2}} }
\newcommand{\bmatrowind}[4]{ \tensor*[_{#1}]{\begin{bmatrix}{#2} & {#3}\end{bmatrix}}{_{#4}} }

\newcommand{\pmatrow}[2]{ \begin{pmatrix}{#1} & {#2} \end{pmatrix} }
\newcommand{\pmatcol}[2]{ \begin{pmatrix}{#1} \\ {#2} \end{pmatrix} }

\newcommand{\N}{\mathbb{N}}
\newcommand{\Z}{\mathbb{Z}}

\newcommand{\Q}{\mathbb{Q}}

\newcommand{\Set}{\mathbf{Set}}

\newcommand{\AsCat}{\mathcal{C}}
\newcommand{\Free}{\mathrm{Free}}

\newcommand{\CH}{\mathrm{H}}

\newcommand{\SVec}{\mathrm{Rows}}

\newcommand{\Ab}{\mathbf{Ab}}

\newcommand{\op}{\mathrm{op}}

\newcommand{\Obj}{\mathrm{Obj}}
\DeclareMathOperator{\ObjC}{\Obj_{\mathbf{C}}}

\newcommand{\Freyd}{\mathcal{A}}

\newcommand{\im}{\mathrm{im}}

\newcommand{\tr}{\mathrm{tr}}

\newcommand{\id}{\mathrm{id}}

\DeclareMathOperator{\Hom}{\mathrm{Hom}}

\DeclareMathOperator{\HomC}{\Hom_{\mathbf{C}}}

\DeclareMathOperator{\kernel}{\mathrm{ker}}
\DeclareMathOperator{\weakkernel}{\mathrm{WeakKernel}}
\DeclareMathOperator{\WeakPullback}{\mathrm{WeakPullback}}

\DeclareMathOperator{\gkernel}{\mathrm{gker}}
\DeclareMathOperator{\cokernel}{\mathrm{coker}}

\DeclareMathOperator{\domain}{\mathrm{dom}}
\DeclareMathOperator{\image}{\mathrm{im}}

\DeclareMathOperator{\defect}{\mathrm{def}}
\DeclareMathOperator{\generalizedimage}{\mathrm{gim}}

\DeclareMathOperator{\Tor}{\mathrm{Tor}}
\DeclareMathOperator{\Ext}{\mathrm{Ext}}

\DeclareMathOperator{\Rel}{\mathrm{Rel}}
\DeclareMathOperator{\Span}{\mathrm{Span}}

\DeclareMathOperator{\G}{\mathbf{G}}

\newcommand{\emb}{\mathrm{emb}}
\newcommand{\proj}{\mathrm{proj}}

\newcommand{\fpmodl}{\text{-}\mathrm{fpmod}}

\newcommand{\fpgrmodl}{\text{-}\mathrm{fpgrmod}}

\newcommand{\Rows}{\mathrm{Rows}}

\newcommand{\fp}{\mathrm{fp}}

\newcommand{\GAP}{\texttt{GAP}\xspace}

\newcommand{\Source}{\mathrm{Source}}
\newcommand{\Range}{\mathrm{Range}}

\newcommand{\KernelObject}{\mathrm{KernelObject}}
\newcommand{\KernelEmbedding}{\mathrm{KernelEmbedding}}
\newcommand{\WeakKernelEmbedding}{\mathrm{WeakKernelEmbedding}}

\newcommand{\KernelLift}{\mathrm{KernelLift}}
\newcommand{\WeakKernelLift}{\mathrm{WeakKernelLift}}
\newcommand{\CokernelColift}{\mathrm{CokernelColift}}

\newcommand{\CokernelObject}{\mathrm{CokernelObject}}

\newcommand{\CokernelProjection}{\mathrm{CokernelProjection}}

\newcommand{\bfindex}[1]{{\textbf{#1}}{\index{{#1}}}}




\setlength{\marginparwidth}{2cm}
\tikzset{round left paren/.style={ncbar=0.5cm,out=120,in=-120}}
\tikzset{round right paren/.style={ncbar=0.5cm,out=60,in=-60}}
\newcolumntype{C}[1]{>{\centering\arraybackslash$}p{#1}<{$}}
\newlength{\mycolwd}
\settowidth{\mycolwd}{$.....$}
\usepackage{array, xcolor}
\definecolor{lightgray}{gray}{0.8}
\newcolumntype{L}{>{\raggedleft}p{0.28\textwidth}}
\newcolumntype{R}{p{0.8\textwidth}}

\definecolor{ctcolor}{gray}{0.95}

\definecolor{ctucolor}{gray}{0.85}

\makeatletter
\newcommand{\thickhline}{%
    \noalign {\ifnum 0=`}\fi \hrule height 1pt
    \futurelet \reserved@a \@xhline
}
\newcolumntype{"}{@{\hskip\tabcolsep\vrule width 1pt\hskip\tabcolsep}}
\makeatother

\usepackage{enumitem}
\newlist{theoremenumerate}{enumerate}{1}
\setlist[theoremenumerate]{label=(\arabic{theoremenumeratei}), ref=\thetheorem.(\arabic{theoremenumeratei}),noitemsep}

\relpenalty=10000
\binoppenalty=10000 

\author{Sebastian Posur}
\thanks{The author is supported by Deutsche Forschungsgemeinschaft (DFG) grant SFB-TRR 195: \emph{Symbolic Tools in Mathematics and their Application}}
\address{Department of mathematics, University of Siegen, 57072 Siegen, Germany}
\email{\href{mailto:Sebastian Posur <sebastian.posur@uni-siegen.de>}{sebastian.posur@uni-siegen.de}}

\begin{document}

\title[Methods of constructive category theory]{Methods of constructive category theory}

\begin{abstract}
We give an introduction to constructive category theory by answering two guiding computational questions.
The first question is: how do we compute the set of all natural transformations between two finitely presented functors like $\Ext$ and $\Tor$
over a commutative coherent ring $R$? We give an answer by introducing category constructors that enable us to build up
a category which is both suited for performing explicit calculations and equivalent to the category of all finitely presented functors.
The second question is:
how do we determine the differentials on the pages of a spectral sequence associated to a filtered cochain complex
only in terms of operations directly provided by the axioms of an abelian category?
Its answer relies on a constructive method for performing diagram chases
based on a calculus of relations within an arbitrary abelian category.
\end{abstract}

\keywords{%
Constructive category theory, finitely presented functors, diagram chases%
}
\subjclass[2010]{%
18E10, 
18E05, 
18A25, 
18E25
}

\maketitle

\tableofcontents

\section*{Introduction}
Basic algorithms in computer algebra provide answers for basic mathematical questions.
The Gaussian algorithm computes solutions of a given linear system over a field $k$.
The Euclidean algorithm computes the gcd of elements in an Euclidean domain.
Buchberger's algorithm \cite{Buch} computes Gröbner bases of (homogeneous) ideals $I = \langle f_1, \dots, f_r \rangle$ in a (graded) polynomial ring $R \coloneqq k[ x_1, \dots, x_n ]$ for $r, n \in \N$,
allowing us to answer many basic questions\footnote{For learning how to answer these questions computationally, we refer the reader to \cite{GP}.}: when do two representatives of elements in the residue class ring $k[ x_1, \dots, x_n ]/ I$ define the same element?
How to find a finite set of generators of the ideal $k[ x_1, \dots, x_m ] \cap I$ for $m < n$?
How to find a generating set for the syzygies of the given generators of $I$?
Generalizations of Buchberger's algorithm \cite{Green99}
provide answers to similar questions for some non-commutative rings, like finite dimensional quotients of path algebras.

In this article,
we demonstrate a strategy
that uses these basic algorithms as building blocks 
for answering a more high-level mathematical question:
\begin{enumerate}
 \item How do we compute the set of all natural transformations between two finitely presented functors over a commutative coherent\footnote{A commutative ring is coherent if kernels of $R$-module homomorphisms between finitely generated free $R$-modules are themselves finitely generated.} ring $R$?
\end{enumerate}
Examples of finitely presented functors over such rings are given by $\Ext^i(M,-)$ and $\Tor_i(M,-)$ for a finitely presented $R$-module $M$ and $i \in \N_0$.

The first section of this article is dedicated to answering this question.
The main idea is to use a constructive formulation of category theory.
We regard a category $\AC$ as a computational entity on whose objects and morphisms 
we can operate by algorithms. For example, composition of morphisms is an algorithm that takes two morphisms
$\alpha: A \rightarrow B$, $\beta: B \rightarrow C$ as input and outputs a new morphism $\alpha \cdot \beta: A \rightarrow C$.
Equality of morphisms is an algorithm that takes two morphisms $\alpha: A \rightarrow B$, $\alpha': A \rightarrow B$,
and outputs $\mathtt{true}$ if $\alpha$ and $\alpha'$ are equal, $\mathtt{false}$ otherwise.

The basic algorithms of computer algebra can now be used to render concrete instances of categories computable in the above sense.
For example, if we regard the quotient ring
$R = k[ x_1, \dots, x_n ]/ I$ as a category with a single object whose morphisms are given by the elements of $R$ and composition
by ring multiplication,
then deciding equality of morphisms is the same as deciding equality of ring elements, which is algorithmically realized by Buchberger's algorithm.
As another example, Gaussian elimination serves as an algorithm to realize the computation of kernels in a computational model of finite dimensional vector spaces.

Once reinterpreted in purely categorical terms, we can forget about the internal functioning of the basic algorithms
and start building up algorithms that solely rely on category theory specific notions.
In this way, we will be able to answer the more high-level mathematical question stated above,
i.e., we end up with an algorithmic strategy for computing sets of natural transformations between finitely presented functors.

Moreover, once we get used to the idea of using purely categorical notions as building blocks of
our algorithms, we can ask further questions that are founded on this idea:
\begin{enumerate}
 \setcounter{enumi}{1}
 \item How do we construct morphisms that are claimed to exist by homological algebra, like the differentials on the pages of a spectral sequence associated to a filtered cochain complex, only in terms of operations directly provided by the axioms of an abelian category, like computing kernels or cokernels?
\end{enumerate}

The second section of this article deals with this second question
and its answer relies on the introduction of the concept of \emph{generalized morphisms} \cite{barhabil}.
They provide a key tool for a constructive treatment of homological algebra
that let us compute with spectral sequences in the end.

Our constructive treatment of category theory
has been implemented within a software project called \CapPkg \cite{CAP-project}, which consists of a collection of \GAP \cite{GAP4} packages.
To reveal the feasibility of a direct computer implementation of all the outlined ideas within this article,
we make use of a more constructive language of mathematics (see, for example, \cite{MRRConstructiveAlgebra}).
Concretely, this means that we make an intuitive use of terms like \emph{data types} and \emph{algorithms}
instead of sets and functions,
and treat the notion of equality between elements of data types
as an extra datum that has to be provided by an explicit algorithm
(instead of being inherently available like in the case of sets).
Since this more constructive language encompasses classical mathematics,
all given constructions and theorems are also valid classically.

We assume a classical understanding of basic notions in category theory:
categories, functors, natural transformations, and equivalences of categories.

\section{Category constructors}

In this section, we make use of the concept of category constructors
in order to build up a category equivalent to the category of finitely presented functors.
Simply put, a category constructor is an operation that produces a category from some given input:
\begin{center}
    \begin{tikzpicture}[label/.style={postaction={
      decorate,
      decoration={markings, mark=at position .5 with \node #1;}}},
      mylabel/.style={thick, draw=black, align=center, minimum width=0.5cm, minimum height=0.5cm,fill=white}]
      \coordinate (r) at (8,0);
      \node[mylabel] (A) {some input};
      \node[mylabel] (B) at ($(A)+(r)$) {a category};
      \draw[shorten >=0.2em,shorten <=0.2em,->,thick] (A) --node[above]{category constructor} (B);
    \end{tikzpicture}
\end{center}
For example, we can regard a ring $R$ as a single object category $\AsCat(R)$ whose morphisms are given by the elements
of $R$ and composition is given by ring multiplication. This defines a category constructor:
\begin{center}
    \begin{tikzpicture}[label/.style={postaction={
      decorate,
      decoration={markings, mark=at position .5 with \node #1;}}},
      mylabel/.style={thick, draw=black, align=center, minimum width=0.5cm, minimum height=0.5cm,fill=white}]
      \coordinate (r) at (8,0);
      \node[mylabel] (A) {$R$ a ring};
      \node[mylabel] (B) at ($(A)+(r)$) {$R$ regarded as a category};
      \draw[shorten >=0.2em,shorten <=0.2em,->,thick] (A) --node[above]{$\AsCat(-)$} (B);
    \end{tikzpicture}
\end{center}
The input of a category constructor can of course itself consist of a category, for example in the case of taking the opposite category:
\begin{center}
    \begin{tikzpicture}[label/.style={postaction={
      decorate,
      decoration={markings, mark=at position .5 with \node #1;}}},
      mylabel/.style={thick, draw=black, align=center, minimum width=0.5cm, minimum height=0.5cm,fill=white}]
      \coordinate (r) at (6,0);
      \node[mylabel] (A) {$\AC$ a category};
      \node[mylabel] (B) at ($(A)+(r)$) {$\AC^{\op} \coloneqq $ opposite of $\AC$};
      \draw[shorten >=0.2em,shorten <=0.2em,->,thick] (A) --node[above]{$(-)^{\op}$} (B);
    \end{tikzpicture}
\end{center}

Other important examples of category constructors introduced in this section are:
\begin{itemize}
 \item the additive closure $\AC \mapsto \AC^{\oplus}$, see Subsection \ref{subsection:additive_closure}, which turns an Ab-category $\AC$ into an additive one,
 \item the Freyd category $\AC \mapsto \Freyd( \AC )$, see Subsection \ref{subsection:Freyd_category}, which equips an additive category $\AC$ with cokernels.
\end{itemize}

An iterative application of category constructors can lead to intriguing results.
Let $R$ be a commutative coherent ring,
$R\fpmodl$ the category of finitely presented $R$-modules,
and $\fp( R\fpmodl, \Ab )$ the category of all finitely presented functors $R\fpmodl \rightarrow \Ab$ (where $\Ab$ denotes the category of abelian groups),
i.e., functors that arise as cokernels of representable functors.
Triggering a cascade of category constructors yields an equivalence
\[
 \fp( R\fpmodl, \Ab ) \simeq \Freyd( \Freyd( \AsCat(R)^{\oplus} )^{\op} ).
\]
Thus, knowing how to compute homomorphism sets within $\Freyd( \Freyd( \AsCat(R)^{\oplus} )^{\op} )$ 
allows us to compute homomorphism sets between finitely presented functors.

In order to carry out this plan, we start at the lowest level $\AsCat(R)$ of this cascade
and analyze how algorithms at the current level give rise to algorithms on the next level until we end up with algorithms for dealing with the top level.

\subsection{Computable categories}

As a very first step we need to introduce categories from a constructive point of view.
We will see that it is worthwhile to pay special attention to the
classically trivial notion of \emph{equality of morphisms}.

\begin{definition}\label{definition:computable_category}
 A \textbf{category} $\AC$ consists of the following data:
 \begin{enumerate}
  \item A data type $\Obj_{\AC}$ 
        (\textbf{objects}).
  \item Depending on $A,B \in \Obj_{\AC}$, a data type $\Hom_{\AC}(A,B)$ (\textbf{morphisms}),
        each equipped with an equivalence relation $=$ (\textbf{equality}).
  \item An algorithm that computes for given $A,B,C \in \Obj_{\AC}$, $\alpha \in \Hom_{\AC}(A,B)$, and $\beta \in \Hom_{\AC}(B,C)$
        a morphism $\alpha \cdot \beta \in \Hom_{\AC}(A,C)$
        (\textbf{composition}).
        For $D \in \Obj_{\AC}$ and $\gamma \in \Hom_{\AC}(C,D)$, we require
        \[
         (\alpha \cdot \beta)\cdot \gamma = \alpha \cdot ( \beta \cdot \gamma ) \text{~(\textbf{associativity})}.
        \]
  \item An algorithm that constructs for given $A \in \Obj_{\AC}$ a morphism $\id_A \in \Hom_{\AC}(A,A)$
        (\textbf{identities}). For $B, C \in \Obj_{\AC}$, $\beta \in \Hom_{\AC}(B,A)$, $\gamma \in \Hom_{\AC}(A,C)$,
        we require 
        \[
         \beta \cdot \id_A = \beta \text{\hspace{1em} and \hspace{1em}} \id_A \cdot \gamma = \gamma.
        \]
  \end{enumerate}
\end{definition}

We give several examples of categories
that will quickly lead us into the realm of computationally undecidable problems.

\begin{example}\label{example:monoid_as_category}
 Every monoid $(M, 1, \cdot)$ gives rise to a category $\AsCat( M )$,
 consisting of a single object $\ast$,
 whose morphisms $\ast \stackrel{m}{\longrightarrow} \ast$
 are given by the elements $m \in M$.
 Composition is induced by multiplication in $M$,
 the identity is given by $1 \in M$.
 Equality of morphisms is simply equality of elements.
\end{example}

\begin{example}\label{example:free_monoid_as_category}
 Let $\Sigma$ be a finite alphabet, say $\Sigma \coloneqq \{ a, b, c, d, e \}$.
 All words built up from $\Sigma$,
 i.e., the elements of the free monoid $\Free( \Sigma )$ on $\Sigma$,
 together with concatenation of words form the morphisms of the single object category $\AsCat( \Free( \Sigma ) )$
 with the empty word as the identity.
 In this example, equality of morphisms is given by comparing words letter by letter.
\end{example}

\begin{example}\label{example:undecidable_monoid}
 We may alter the notion of equality in the previous Example \ref{example:free_monoid_as_category}
 without altering the other defining data.
 For a given finite set $R \subseteq \Free( \Sigma ) \times \Free( \Sigma )$, we may choose
 the equality in our category as the monoid equivalence relation generated by $R$,
 i.e., the smallest equivalence relation containing $R$ that is also a submonoid of $\Free( \Sigma ) \times \Free( \Sigma )$.
 For example, we could choose the monoid equivalence relation generated by
 \begin{align*}
  ac &= ca \\
  bc &= cb \\
  ce &= eca \\
  ad &= da \\
  bd &= db \\
  de &= edb \\
  cca &= ccae.
 \end{align*}
 We can still perform compositions as in $\AsCat( \Free( \Sigma ) )$,
 but the question of deciding whether two morphisms are equal
 w.r.t.\ the concrete monoid equivalence relation above
 is computationally unsolvable \cite{CollinsWordProblem}.
\end{example}

The previous example highlights the enormous importance of equality in a constructive setup
and motivates the next definition which singles out those categories for which the
classically trivial proposition
\[
 \forall \alpha, \beta \in \Hom_{\AC}(A,B): \alpha = \beta  \vee \alpha \neq \beta
\]
can be realized algorithmically.

\begin{definition}
 A category $\AC$ is called \textbf{computable}
 if we have an algorithm that decides for given $A,B \in \Obj_{\AC}$, $\alpha, \beta \in \Hom_{\AC}(A,B)$
 whether $\alpha = \beta$ or $\alpha \neq \beta$.
\end{definition}

\begin{example}
The category associated to the free monoid as described in Example \ref{example:free_monoid_as_category}
is computable if we can decide equality of elements in the given alphabet $\Sigma$. 
\end{example}

The following example generalizes the example of a free monoid
to ``a free monoid with multiple objects'', i.e., a free category.

\begin{example}[Free categories]\label{example:free_categories}
 A quiver $Q$ is a directed graph (with finitely many vertices and edges) that is allowed to contain loops and multiple edges.
 Edges $\alpha$ in $Q$ are usually called arrows
 and are depicted by
 \[
  a \stackrel{\alpha}{\longrightarrow} b,
 \]
 and we set $\Source( \alpha ) \coloneqq a$ and $\Range( \alpha ) \coloneqq b$.
 Arrows $\alpha$ and $\beta$ are called composable if
 \[
  \Range( \alpha ) = \Source( \beta ).
 \]
 Given two nodes $a,b \in Q$, 
 a path from $a$ to $b$ is a finite sequence
 of arrows $\alpha_1, \dots, \alpha_n$ such that 
 any two consecutive arrows are composable,
 and $\Source( \alpha_1 ) = a$, $\Range( \alpha_n ) = b$.
 If $a = b$, we allow $n = 0$ and call it the empty path.
 
 Taking the set of nodes in $Q$ as our objects,
 and taking paths from $a$ to $b$ as our morphisms,
 we can create a category whose composition is given by concatenation of paths.
 The empty paths now come in handy as identities.
 Equality for morphisms is given by comparing two paths arrow-wise.
 If we start with the quiver that contains a single node denoted by $\ast$
 and five loops
 \begin{center}
  \begin{tikzpicture}[transform shape,mylabel/.style={thick, align=center, minimum width=0.5cm, minimum height=0.5cm,fill=white}]
          \coordinate (r) at (8,0);
          \coordinate (rr) at (1,0);
          \coordinate (d) at (0,-1);
          \node(base) {$\ast$};
          \draw[->,thick] (base) to [out=90+20,in=90-20,looseness=15] node[above] {$a$} (base);
          \draw[->,thick] (base) to [out=90+20+72,in=90-20+72,looseness=15] node[above] {$b$} (base);
          \draw[->,thick] (base) to [out=90+20+144,in=90-20+144,looseness=15] node[left] {$c$} (base);
          \draw[->,thick] (base) to [out=90+20+216,in=90-20+216,looseness=15] node[right,xshift=-0.1em] {$d$} (base);
          \draw[->,thick] (base) to [out=90+20+288,in=90-20+288,looseness=15] node[above] {$e$} (base);
  \end{tikzpicture}
\end{center}
then this category recovers Example \ref{example:free_monoid_as_category}.
Free categories are computable whenever we can decide equality of arrows.
\end{example}

\subsection{Ab-categories}

We are mostly interested in categories that admit additional structure.
An Ab-category is a category $\AC$ for which
all homomorphism data types come equipped with the structure of abelian groups
such that this structure is compatible with the composition in $\AC$.
We spell this out explicitly.

\begin{definition}
  An \textbf{Ab-category} is a category $\AC$ for which we have:
  \begin{enumerate}
  \item An algorithm that computes for given $A,B \in \Obj_{\AC}$, $\alpha, \beta \in \Hom_{\AC}(A,B)$
        a morphism $\alpha + \beta \in \Hom_{\AC}(A,B)$ (\textbf{addition}).
  \item An algorithm that constructs for given $A,B \in \Obj_{\AC}$ a morphism $0 \in \Hom_{\AC}(A,B)$
        (\textbf{zero morphism}).
  \item An algorithm that computes for given $A,B \in \Obj_{\AC}$, $\alpha \in \Hom_{\AC}(A,B)$ 
        a morphism $-\alpha \in \Hom_{\AC}(A,B)$ (\textbf{additive inverse}).
  \item For all $A,B \in \Obj_{\AC}$, the given data turn $\Hom_{\AC}(A,B)$ into an abelian group.
  \item Composition with morphisms both from left and right in $\AC$ becomes a bilinear map.
\end{enumerate}
\end{definition}

\begin{example}[Rings as categories]\label{example:ring_as_category}
 Analogous to Example \ref{example:monoid_as_category},
 every ring $R$ gives rise to an Ab-category $\AsCat(R)$, i.e.,
 we identify ring multiplication with composition\footnote{
   Note that we defined composition in a category as \emph{pre}composition,
   and not as \emph{post}composition. It is common to regard a ring $R$
   as a category with postcomposition being identified with ring multiplication,
   and also to use the symbol $R$ in order to refer to that category.
   Thus, our category $\AsCat(R)$ equals the category $R^{\op}$.
 }.
 
 Analogous to Example \ref{example:undecidable_monoid},
 every two-sided ideal $I \subseteq R$
 lets us alter the notion of equality in this category by considering
 two morphisms as equal if and only if they are equal as elements in the quotient ring $R/I$.
 We denote this category by $\AsCat(R, I)$ and remark that it is equivalent to $\AsCat( R/I )$.
\end{example}

\begin{example}
Let $k$ be a field.
Let us consider the Ab-category $\AsCat(R, I)$
associated to the commutative polynomial ring
$R \coloneqq k[x_1, \dots, x_n]$ for $n \in \N$
and an ideal $I \subseteq R$ generated by finitely many given elements.
It is a great triumph of computer algebra that this category is indeed computable provided we can decide equality in $k$.
The decidability test for equality in $R/I$ can then be executed using the theory of Gröbner bases.
For a detailed account on Gröbner bases, see, e.g., \cite{CLO}.
\end{example}

To a graded ring, we can attach an Ab-category
having more than a single object.

\begin{example}[Graded rings as categories]\label{example:graded_ring_as_category}
 A $\Z$-graded ring is a
 ring $R$ together with a decomposition
 \[
  R = \bigoplus_{d \in \Z} R_d
 \]
 into a direct sum of abelian groups
 such that $R_d \cdot R_e \subseteq R_{d+e}$ for all $d,e \in \Z$.
 We can form an Ab-category out of these data as follows:
 \begin{enumerate}
  \item Objects are given by $\Z$.
  \item For $a,b \in \Z$, homomorphisms from $a$ to $b$ are given by elements in $R_{b-a}$.
  \item Composition is multiplication in $R$.
 \end{enumerate}
 We denote this category by $\AsCat( \bigoplus_{d \in \Z} R_d )$.
 For a graded ideal $I \subseteq R$,
 we alter the notion of equality analogously to Example \ref{example:ring_as_category} 
 in order to obtain an Ab-category $\AsCat( \bigoplus_{d \in \Z} R_d, I )$.
\end{example}

\begin{example}[Path algebras as categories]\label{example:path_algebra_as_category}
 We can linearize Example \ref{example:free_categories} as follows.
 Let $k$ be a commutative ring.
 Again, we take as objects the nodes in our quiver $Q$,
 but now, we allow as morphisms from a node $a$ to a node $b$
 any formal $k$-linear combination of paths from $a$ to $b$.
 Extending concatenation of paths $k$-bilinearly,
 we obtain in this way an Ab-category that we denote by $\AsCat( k, Q )$.
 
 Note that the morphisms from $a$ to $b$ in $\AsCat( k, Q )$ identify with 
 all elements in the path algebra $k[Q]$ that start at $a$ and end at $b$.
 In this sense, our category only stores the uniform elements of $k[Q]$.
 Ideals $I \subseteq k[Q]$ generated by uniform elements let us alter the notion of equality
 analogously to Example \ref{example:ring_as_category}, and we denote the corresponding category
 by $\AsCat( k, Q, I )$.
\end{example}

\subsection{Additive closure}\label{subsection:additive_closure}

In this subsection, we want to introduce a categorical concept that
grasps the idea of forming matrices whose entries consist of morphisms
in an underlying Ab-category.

\begin{definition}
  An \textbf{additive category} is an Ab-category $\AC$ for which we have:
  \begin{theoremenumerate}
  \setcounter{enumi}{9}
  \item An algorithm that computes for a given finite (possibly empty) list of objects $A_1, \dots, A_n$ in $\Obj_{\AC}$ (for $n \in \N_0$)
        an object $\bigoplus_{i=1}^n A_i \in \Obj_{\AC}$ (\textbf{direct sum}).
        If we are additionally given an integer $j \in \{ 1 \dots n\}$,
        we furthermore have algorithms for computing 
        morphisms $\pi_j \in \Hom_{\AC}(\bigoplus_{i=1}^n A_i, A_j)$ (\textbf{direct sum projection})
        and $\iota_j: \Hom_{\AC}(A_j, \bigoplus_{i=1}^n A_i)$ (\textbf{direct sum injection}).
  \item\label{definition_item:add_identities} The identities
        \begin{itemize}
         \item $\sum_{i=1}^n \pi_i \cdot \iota_i = \id_{\bigoplus_{i=1}^n A_i}$,
         \item $\iota_i \cdot \pi_i = \id_{A_i}$,
         \item $\iota_i \cdot \pi_j = 0$,
        \end{itemize}
        hold for all $i,j = 1, \dots, n$, $i \not= j$.
 \end{theoremenumerate}
\end{definition}
Direct sums in $\AC$ and matrices having morphisms in $\AC$ as its entries are closely linked as follows:
given a morphism
\[
 \bigoplus_{i=1}^m A_i \stackrel{\alpha}{\longrightarrow} \bigoplus_{j=1}^n B_j
\]
between direct sums in $\AC$, we can form the matrix of morphisms
\[
 \big( A_s \stackrel{\iota_s}{\longrightarrow} \bigoplus_{i=1}^m A_i \stackrel{\alpha}{\longrightarrow} \bigoplus_{j=1}^n B_j \stackrel{\pi_t}{\longrightarrow} B_t \big)_{st}.
\]
Conversely, any matrix of morphisms
\[
 \big( A_s \stackrel{\alpha_{st}}{\longrightarrow} B_t \big)_{ \substack{s = 1, \dots, m\\t = 1, \dots, n}}
\]
defines a morphism
\[
 \sum_{\substack{s = 1, \dots, m\\t = 1, \dots, n}}(\pi_s \cdot \alpha \cdot \iota_t): \bigoplus_{i=1}^m A_i {\longrightarrow} \bigoplus_{j=1}^n B_j.
\]
Both constructions are mutually inverse thanks to the equational identities \ref{definition_item:add_identities} that hold for direct sums.

If an Ab-category $\AC$ does not yet admit direct sums,
it is easy to construct its additive closure
by employing exactly the philosophy of thinking of morphisms
between direct sums as matrices.
We will now show this construction as an example of a \emph{category constructor}.

\begin{construction}
 Let $\AC$ be an Ab-category.
 We construct its \textbf{additive closure} $\AC^{\oplus}$
 as follows:
 an object in $\AC^{\oplus}$ is given by an integer $m \geq 0$ and a list
 \[
  (A_1, \dots, A_m)
 \]
 of objects $A_i \in \AC$ for $i = 1, \dots, m$.
 We think of this list as formally representing the object
 \[
  \bigoplus_{i = 1}^{m} A_i.
 \]
 A morphism from one such list
 $(A_1, \dots, A_m)$ to another $(B_1, \dots, B_n)$ is given by a matrix
 \[
\begin{pmatrix}
\alpha_{11} & \dots & \alpha_{1n} \\
\vdots & \ddots & \vdots \\
\alpha_{m1} & \dots & \alpha_{mn} \\
\end{pmatrix}
 \]
 consisting of morphisms $\alpha_{ij}: A_i \longrightarrow B_j$ in $\AC$.
 Now, composition can be defined by the usual formula for matrix multiplication,
 and matrices with identity morphisms on the diagonal and zero morphisms off-diagonal
 serve as identities in this category.
 Equality for morphisms is checked entrywise.
\end{construction}

\begin{remark}
 $\AC^{\oplus}$ is computable if and only if $\AC$ is.
\end{remark}

It is quite easy to check that $\AC^{\oplus}$ is indeed an additive category.
Futhermore, we can always view $\AC$ as a subcategory of $\AC^{\oplus}$
by identifying an object in $A \in \AC$ as a list with a single element $(A)$.
The empty list $()$ defines a zero object in $\AC^{\oplus}$, i.e., an object whose identity morphism
equals the zero morphism.

\begin{example}\label{example:rows_k}
 If $k$ is a field, then 
 the objects in $\AsCat(k)^{\oplus}$ (see Example \ref{example:ring_as_category})
 are simply given by natural numbers $\N_0$,
 and a morphism from $m \in \N_0$ to $n \in \N_0$
 is an $m \times n$ matrix with entries in $k$.
 
 The map $m \mapsto k^{1 \times m}$ and the identification of elements in $k^{m \times n}$
 with $k$-linear maps $k^{1 \times m} \longrightarrow k^{1 \times n}$
 gives rise to an equivalence of categories 
 between $\AsCat(k)^{\oplus}$ and the category of all finite dimensional 
 $k$-vector spaces.
 We set 
 \[
  \SVec_{k} \coloneqq \AsCat(k)^{\oplus},
 \]
 since we think of the objects $n \in \N_0$ as the vector spaces $k^{1 \times n}$ of rows.
 From a computational point of view, $\SVec_{k}$ often serves as a workhorse:
 due to the power of Gaussian elimination, whenever we can reduce a problem in another category
 to linear algebra, we can try and solve it within $\SVec_{k}$.
\end{example}

\begin{example}\label{example:rows_R}
 More generally, if $R$ is a ring,
 then objects in $\AsCat(R)^{\oplus}$
 identify with row modules $R^{1 \times n}$ for $n \in \N_0$,
 and every $R$-module homomorphism $R^{1 \times m} \longrightarrow R^{1 \times n}$ is given by a matrix in $R^{m \times n}$.
 But since not every $R$-module is free in general,
 $\AsCat(R)^{\oplus}$ is only equivalent to a subcategory of the category of all finitely generated $R$-modules.
 We set 
 \[
 \Rows_R \coloneqq \AsCat(R)^{\oplus}.
 \]
\end{example}

If $\AC$ has more than just a single object, then compositionality of morphisms
in $\AC^{\oplus}$ relies on more than just matching numbers of columns and rows.

\begin{example}\label{example:additive_closure_graded}
 If we take the additive closure of the category $\AsCat( \bigoplus_{d\in\Z}R_d )$
 introduced in Example \ref{example:graded_ring_as_category}, then
 we get a category whose objects
 can be seen as finite lists of integers.
 A morphism from such a list $(m_1, \dots, m_s)$ to another list $(n_1, \dots, n_t)$ with $s,t \in \N_0$
 is given by a matrix 
 \[(a_{ij})_{\substack{i = 1, \dots, s \\ j = 1, \dots, t } }\]
 with homogeneous entries
 in $R$ whose degrees satisfy
 \begin{equation}\label{eq:degrees}
\deg( a_{ij} ) + m_i = n_j  
 \end{equation}
 whenever $a_{ij} \neq 0$.
 
 As an example, let $k$ be a field and $R = k[x,y]$ be the $\Z$-graded polynomial ring with $\deg(x) = \deg(y) = 1$.
 Then
 \begin{center}
         \begin{tikzpicture}[label/.style={postaction={
          decorate,
          decoration={markings, mark=at position .5 with \node #1;}},
          mylabel/.style={thick, draw=none, align=center, minimum width=0.5cm, minimum height=0.5cm,fill=white}}]
          \coordinate (r) at (2.5,0);
          \coordinate (u) at (0,2);
          \node (A) {$(0,1)$};
          \node (B) at ($(A)+(r)$) {$(2)$};
          \draw[->,thick] (A) --node[above]
          {$\pmatcol{xy}{x+y}$} 
          (B);
          \end{tikzpicture}
\end{center}
is an example of a morphism in $\AsCat( \bigoplus_{d\in\Z}R_d )$.
Note that the matrix alone does not determine the source and range of this morphism, since, for example 
 \begin{center}
         \begin{tikzpicture}[label/.style={postaction={
          decorate,
          decoration={markings, mark=at position .5 with \node #1;}},
          mylabel/.style={thick, draw=none, align=center, minimum width=0.5cm, minimum height=0.5cm,fill=white}}]
          \coordinate (r) at (2.5,0);
          \coordinate (u) at (0,2);
          \node (A) {$(-1,0)$};
          \node (B) at ($(A)+(r)$) {$(1)$};
          \draw[->,thick] (A) --node[above]
          {$\pmatcol{xy}{x+y}$} 
          (B);
          \end{tikzpicture}
\end{center}
is also a valid example of a morphism.
If we fix the matrix and the source/range in the first example
and forget its range/source
 \begin{center}
         \begin{tikzpicture}[label/.style={postaction={
          decorate,
          decoration={markings, mark=at position .5 with \node #1;}},
          mylabel/.style={thick, draw=none, align=center, minimum width=0.5cm, minimum height=0.5cm,fill=white}}]
          \coordinate (r) at (2.5,0);
          \coordinate (u) at (0,2);
          \node (A) {$(0,1)$};
          \node (B) at ($(A)+(r)$) {$(?)$};
          \node (A2) at ($(A)+3*(r)$) {$(?,?)$};
          \node (B2) at ($(A2)+(r)$) {$(2)$};
          \draw[->,thick] (A) --node[above]
          {$\pmatcol{xy}{x+y}$} 
          (B);
          
          \draw[->,thick] (A2) --node[above]
          {$\pmatcol{xy}{x+y}$} 
          (B2);
    \end{tikzpicture}
\end{center}
then Equation \ref{eq:degrees} makes it possible to reconstruct the missing information.
However, such a reconstruction is not possible in general: the $s \times t$ zero matrix
defines a valid morphism between any two objects $(m_1, \dots, m_s)$ and $(n_1, \dots, n_t)$.
\end{example}

\begin{example}\label{example:additive_closure_k_Q}
 Similarly, taking the additive closure of the category $\AsCat( k, Q )$
 introduced in Example \ref{example:path_algebra_as_category},
 we get a category whose objects are finite lists of nodes in $Q$,
 and morphisms 
 from a list $(v_1, \dots, v_s)$ to $(w_1, \dots, w_s)$
 are matrices 
 \[(a_{ij})_{\substack{i = 1, \dots, s \\ j = 1, \dots, t } }\]
 whose entries consist of uniform elements in the path algebra $k[Q]$,
 where $a_{ij}$ is either zero or starts at $v_i$ and ends at $w_j$.
\end{example}

\subsection{Homomorphism structures}

The question of how to describe the homomorphisms between two objects ``as a whole''
is just as important as the decidability problem of equality for two individual morphisms.
Classically, one could restrict the attention to so-called locally small categories,
which are categories $\AC$ in which the members of the family $\Hom_{\AC}(A,B)$
can all be interpreted as objects in $\Set$, the category of sets.
This enables us to view $\Hom$ as a functor
\[
 \Hom: \AC^{\op} \times \AC \rightarrow \Set.
\]
For our constructive approach, we will simply generalize
this point of view and axiomatize those features that we need
from a $\Hom$-functor to make computational use of it.
But before we do this, we state the definition of a functor
within our constructive setup.

\begin{definition}\label{definition:functor}
 A \bfindex{functor} $F$ between two categories $\AC$ and $\BC$ consists of the following data:
 \begin{enumerate}
  \item An algorithm that computes for given $A \in \Obj_{\AC}$ an object  $F(A) \in \Obj_{\BC}$.
  \item An algorithm that computes for given $A,B \in \Obj_{\AC}$, $\alpha \in \HomC(A,B)$
        a morphism $F( \alpha ) \in \Hom_{\BC}(F(A),F(B))$.
        This algorithm needs to be compatible with the notion of equality for morphisms.
  \item For $A \in \ObjC$, $F( \id_A ) = \id_{F(A)}$.
  \item For $A,B,C \in \ObjC$, $\alpha \in \Hom_{\AC}( A, B )$, $\beta \in \Hom_{\AC}( B, C )$, we have
        \[F( \alpha \cdot \beta ) = F( \alpha ) \cdot F( \beta ).\]
 \end{enumerate}
\end{definition}

\begin{remark}
 Note that since we did not impose a notion of equality on
 the data type $\Obj_{\AC}$, it is not meaningful to declare
 the operation of $F$ on objects to be compatible with equality
 like we did in the case of morphisms.
\end{remark}

\begin{definition}\label{definition:homomorphism_strucutre}
 Let $\AC$, $\BC$ be categories.
 A \textbf{$\BC$-homomorphism structure} for $\AC$ consists of the following data:
 \begin{enumerate}
  \item An object $1 \in \BC$ called the \textbf{distinguished object}.
  \item A functor $H: \AC^{\op} \times \AC \rightarrow \BC$.
  \item A bijection $\nu: \Hom_{\AC}( A,B ) \xrightarrow{\sim} \Hom_{\BC}( 1, H( A,B ) )$ natural in $A,B \in \AC$,
        i.e, 
        \[
        \nu( \alpha \cdot X \cdot \beta ) = \nu( X ) \cdot H(\alpha, \beta) 
        \]
        for all composable triples of
        morphisms $\alpha, X, \beta$.
 \end{enumerate}
 Moreover, if we are in the context of Ab-categories,
 we also impose the condition that $H$ is a bilinear functor, i.e., acts linearly on morphisms in each component.
\end{definition}

\begin{example}\label{example:hom_structure_for_svec}
 Let $k$ be a field. We are going to describe
 a homomorphism structure for $\Rows_k$ (see Example \ref{example:rows_k})
 that is inspired by the fact that
 $\Rows_k$ is equivalent to the category of finite dimensional $k$-vector spaces and that
 linear maps between two given finite dimensional vector spaces form themselves a finite dimensional vector space.
 
 In the language of homomorphism structures, we can construct a $\SVec_k$-homomorphism structure for $\SVec_k$.
 We define a functor $H$ on objects (which are simply elements in $\N_0$) by multiplication of natural numbers,
 and on morphisms (which are matrices) by
 \[
  H( \alpha, \beta ) \coloneqq \alpha^{\tr} \otimes \beta,
 \]
 where $(-)^{\tr}$ is transposition and $\otimes$ denotes the Kronecker product.
 As a distinguished object, we take the natural number $1 \in \N_0$.
 Now, for given $m, n \in \N_0$,
 any morphism from $1$ to $mn$, i.e.,
 any row vector $(a_i)_{i = 1, \dots mn}$, can be interpreted
 as an $m \times n$ matrix by ``line-breaking'' after each $n$-entries.
 Conversely, every $m \times n$ matrix can be converted to such a row 
 by simply concatenating all rows.
 Thus, we have found a natural way to
 transfer ``vectors'' of $mn$, i.e., morphisms $1 \rightarrow mn$,
 into morphisms $m \rightarrow n$ in $\SVec_k$.
 
 So, note that it is not the object $mn \in \SVec_k$ alone that encodes $\Hom_{\SVec_k}(m,n)$,
 but it is the object $mn$ in the context of a homomorphisms structure that allows
 us to \emph{interpret} it as an encoding of homomorphisms from $m$ to $n$.
\end{example}

Next, we describe homomorphism structures for special cases of the Ab-categories
given in Examples \ref{example:ring_as_category}, \ref{example:graded_ring_as_category}, \ref{example:path_algebra_as_category}.

\begin{example}\label{example:hom_structure_for_R}
 Let $R$ be a \emph{commutative} ring.
 We can construct a $\AsCat(R)$-homomorphism structure for $\AsCat(R)$ (see Example \ref{example:ring_as_category}) as follows:
 the operation
 \[
  H: \AsCat(R)^{\op} \times \AsCat(R) \longrightarrow \AsCat(R): (\ast \stackrel{a}{\longleftarrow} \ast, \ast \stackrel{b}{\longrightarrow} \ast) \mapsto (\ast \stackrel{a \cdot b}{\longrightarrow} \ast)
 \]
 defines a bilinear functor
 due to the commutativity of $R$.
 For the distinguished object, we have no other choice but to take the unique object $\ast$ in $\AsCat(R)$.
 Finally, $\nu$ can be chosen as the identity on $\Hom_{\AsCat(R)}(\ast, \ast)$.
\end{example}

\begin{example}\label{example:hom_structure_for_graded_R}
 Let $k$ be a field and
 let $R$ be a $\Z$-graded $k$-algebra.
 If every $R_d$ is of finite $k$-dimension 
 with bases $\{r_d^1, \dots, r_d^{\dim_k(R_d)}\}$,
 then we may write for every $a,b,c \in \Z$ and $r \in R_{c}$, $s \in R_{b-(a+c)}$
 the $k$-linear operator
 \[
  R_a \longrightarrow R_{b}: x \mapsto r \cdot x \cdot s
 \]
 in terms of the given bases in order to obtain matrices $M_{a,b,r,s}$.
 This enables us to describe for $\AsCat( \bigoplus_{d \in \Z} R_d )$ (see Example \ref{example:graded_ring_as_category}) a $\SVec_k$-homomorphism structure
 with
 \[
  H( a, b ) \coloneqq \dim_k( R_{b-a} )
 \]
 for $a,b \in \Z$,
 and for $a',b' \in \Z$, $r \in R_{a-a'}$, $s \in R_{b'-b}$,
 \[
  H( a \stackrel{r}{\longleftarrow} a', b \stackrel{s}{\longrightarrow} b' ) \coloneqq M_{(b-a), (b'-a'), r, s}.
 \]
 The distinguished object is $1 \in \Rows_k$, and $\nu_{a,b}$ computes for an element $r \in R_{b-a}$ its list of coefficients w.r.t.\ the basis $\{r_{b-a}^1, \dots, r_{b-a}^{\dim_k(R_{b-a})}\}$.
 
 Moreover, if $R$ is commutative (but the $R_d$ not necessarily finite dimensional), 
 we could also construct a different homomorphism structure for $\AsCat( \bigoplus_{d \in \Z} R_d )$,
 namely a $\AsCat( \bigoplus_{d \in \Z} R_d )$-homomorphism structure
 with
 \[
  H( a, b ) \coloneqq b - a
 \]
 for $a,b \in \Z$ and for $a',b' \in \Z$, $r \in R_{a-a'}$, $s \in R_{b'-b}$,
 \[
  H( a \stackrel{r}{\longleftarrow} a', b \stackrel{s}{\longrightarrow} b' ) \coloneqq (b-a) \stackrel{r \cdot s}{\longrightarrow} (b'-a').
 \]
 This time, the distinguished object is $0 \in \AsCat( \bigoplus_{d \in \Z} R_d )$,
 and $\nu$ given by the identity
 \[
  \Hom_{\AsCat( \bigoplus_{d \in \Z} R_d )}( a, b ) = R_{b-a} = \Hom_{\AsCat( \bigoplus_{d \in \Z} R_d )}( 0, b - a ).
 \]
 So, we see that it is neither necessarily the case that $\BC$ is equivalent to $\AC$,
 nor that there is only a single homomorphism structure for a given category $\AC$.
\end{example}

\begin{example}\label{example:hom_structure_for_quiver}
 Let $k$ be a field and $Q$ be a quiver.
 If $Q$ is acyclic, then 
 the homomorphisms in $\AsCat(k,Q)$ from a vertex $v$ to a vertex $w$ form a finite dimensional
 $k$-vector space.
 Similarly to Example \ref{example:hom_structure_for_graded_R},
 this allows us to create an $\SVec_k$-homomorphism structure for
 $\AsCat( k, Q )$ with
 \[
  H( v, w ) \coloneqq \text{number of paths from $v$ to $w$.}
 \]
\end{example}

It is natural to ask how a structure that we have given to a category
may transfer to a category obtained by a category constructor.
We can indeed transfer homomorphism structures to the additive closure.

\begin{construction}\label{Construction:lift_of_hom_structure_to_additive_closure}
 Let $\AC$ be an Ab-category and $\BC$ be an additive category.
 Let furthermore $(H,1,\nu)$ be a $\BC$-homomorphism structure for $\AC$.
 Then we can extend $(H,1,\nu)$ to a $\BC$-homomorphisms structure $(H^{\oplus},1,\nu^{\oplus})$ for $\AC^{\oplus}$ by extending bilinearly
 \[
H^{\oplus}\left( (B_j)_j \stackrel{(\alpha_{ij})_{ij}}{\longleftarrow} (A_i)_i, (C_s)_s \stackrel{(\beta_{st})_{st}}{\longrightarrow} (D_t)_t \right)
\coloneqq
\bigoplus_{j,s}H(B_j,C_s) \stackrel{\big(H(\alpha_{ij},\beta_{st})\big)_{(js)(it)}}{\longrightarrow} \bigoplus_{i,t}H(A_i,D_t).
 \]
 The natural isomorphism $\nu^{\oplus}$ is defined via the composition of natural isomorphisms
 \begin{align*}
  \Hom_{\AC^{\oplus}}( (B_j)_j, (C_s)_s ) &\simeq \bigoplus_{j,s}\Hom_{\AC^{\oplus}}( B_j,C_s ) \\
  &\simeq \bigoplus_{j,s}\Hom_{\BC}( 1, H(B_j,C_s) ) \\
  &\simeq \Hom_{\BC}\big( 1, H^{\oplus}((B_j)_j,(C_s)_s) \big). \\
 \end{align*}
\end{construction}

\begin{remark}\label{remark:Ab_to_additive_for_hom_structure}
 We can also use Construction \ref{Construction:lift_of_hom_structure_to_additive_closure}
 in the case when $\BC$ is an Ab-category that is not necessarily additive
 by first applying the full embedding $\BC \hookrightarrow \BC^{\oplus}$
 in order to obtain a $\BC^{\oplus}$-homomorphism structure for $\AC$,
 and then proceed as described.
\end{remark}

\begin{example}
 Let $k$ be a field.
 Let $H$ denote the $\AsCat(k)$-homomorphism structure of $\AsCat(k)$
 described in Example \ref{example:hom_structure_for_R}.
 Applying Construction \ref{Construction:lift_of_hom_structure_to_additive_closure}
 to $H$ (via Remark \ref{remark:Ab_to_additive_for_hom_structure})
 yields exactly the $\SVec_k$-homomorphism structure of $\SVec_k = \AsCat(k)^{\oplus}$
 that we described in Example \ref{example:hom_structure_for_svec}.
\end{example}

\subsection{Freyd category}\label{subsection:Freyd_category}

In this subsection, we introduce a further category constructor: the Freyd category \cite{FreydRep, BelFredCats}.
Freyd categories provide a unified approach to categories of finitely presented modules,
finitely presented graded modules, and finitely presented functors.

Let $R$ be a ring. 
Recall that a (left) $R$-module $M$ is called \textbf{finitely presented} if there exist
$a,b \in \N_0$ and an exact sequence
\begin{center}
         \begin{tikzpicture}[label/.style={postaction={
          decorate,
          decoration={markings, mark=at position .5 with \node #1;}},
          mylabel/.style={thick, draw=none, align=center, minimum width=0.5cm, minimum height=0.5cm,fill=white}}]
          \coordinate (r) at (2.5,0);
          \coordinate (u) at (0,2);
          \node (A) {$R^{1 \times b}$,};
          \node (B) at ($(A)-(r)$) {$R^{1 \times a}$};
          \node (C) at ($(B)-(r)$) {$M$};
          \node (D) at ($(C)-(r)$) {$0$};
          \draw[->,thick] (A) --node[above]{$\rho_M$} (B);
          \draw[->,thick] (B) -- (C);
          \draw[->,thick] (C) -- (D);
          \end{tikzpicture}
\end{center}
which is called a \textbf{presentation} of $M$.
Since $\rho_M$ is induced by a matrix with rows $r_1, \dots, r_b \in R^{1 \times a}$,
being finitely presented means nothing but the existence of an isomorphism
\[
 M \simeq R^{1 \times a}/ \langle r_1, \dots, r_b \rangle.
\]
Thus, we may think of a presentation as a way to store finitely
many relations $r_1, \dots, r_b$ that we would like to impose on an free module $R^{1 \times a}$.
Let $N$ be another finitely presented module with presentation
$\rho_N: R^{1 \times b'} \rightarrow R^{1 \times a'}$.
By the comparison theorem \cite{weihom}, we can lift any morphism $\mu: M \rightarrow N$
to a commutative diagram
\begin{center}
         \begin{tikzpicture}[label/.style={postaction={
          decorate,
          decoration={markings, mark=at position .5 with \node #1;}},
          mylabel/.style={thick, draw=none, align=center, minimum width=0.5cm, minimum height=0.5cm,fill=white}}]
          \coordinate (r) at (2.5,0);
          \coordinate (u) at (0,2);
          \node (A) {$R^{1 \times b}$};
          \node (B) at ($(A)-(r)$) {$R^{1 \times a}$};
          \node (C) at ($(B)-(r)$) {$M$};
          \node (D) at ($(C)-(r)$) {$0$};
          
          \node (A2) at ($(A) - (u)$) {$R^{1 \times b'}$};
          \node (B2) at ($(A2)-(r)$) {$R^{1 \times a'}$};
          \node (C2) at ($(B2)-(r)$) {$N$};
          \node (D2) at ($(C2)-(r)$) {$0$};
          \draw[->,thick] (A) --node[above]{$\rho_M$} (B);
          \draw[->,thick] (B) -- (C);
          \draw[->,thick] (C) -- (D);
          
          \draw[->,thick] (A2) --node[above]{$\rho_N$} (B2);
          \draw[->,thick] (B2) -- (C2);
          \draw[->,thick] (C2) -- (D2);
          
          \draw[->,thick] (A) -- (A2);
          \draw[->,thick] (B) -- (B2);
          \draw[->,thick] (C) --node[left]{$\mu$} (C2);
          \end{tikzpicture}
\end{center}
and conversely, any commutative diagram
\begin{center}
         \begin{tikzpicture}[label/.style={postaction={
          decorate,
          decoration={markings, mark=at position .5 with \node #1;}},
          mylabel/.style={thick, draw=none, align=center, minimum width=0.5cm, minimum height=0.5cm,fill=white}}]
          \coordinate (r) at (2.5,0);
          \coordinate (u) at (0,2);
          \node (A) {$R^{1 \times b}$};
          \node (B) at ($(A)-(r)$) {$R^{1 \times a}$};
          
          \node (A2) at ($(A) - (u)$) {$R^{1 \times b'}$};
          \node (B2) at ($(A2)-(r)$) {$R^{1 \times a'}$};
          \draw[->,thick] (A) --node[above]{$\rho_M$} (B);
          
          \draw[->,thick] (A2) --node[above]{$\rho_N$} (B2);
          
          \draw[->,thick] (A) -- (A2);
          \draw[->,thick] (B) -- (B2);
          \end{tikzpicture}
\end{center}
induces a morphism $\mu: M \rightarrow N$.
Moreover, such a $\mu$ is zero if and only if
we have a commutative diagram with exact rows
\begin{center}
         \begin{tikzpicture}[label/.style={postaction={
          decorate,
          decoration={markings, mark=at position .5 with \node #1;}},
          mylabel/.style={thick, draw=none, align=center, minimum width=0.5cm, minimum height=0.5cm,fill=white}}]
          \coordinate (r) at (2.5,0);
          \coordinate (u) at (0,2);
          \node (A) {$R^{1 \times b}$};
          \node (B) at ($(A)-(r)$) {$R^{1 \times a}$};
          \node (C) at ($(B)-(r)$) {$M$};
          \node (D) at ($(C)-(r)$) {$0$};
          
          \node (A2) at ($(A) - (u)$) {$R^{1 \times b'}$.};
          \node (B2) at ($(A2)-(r)$) {$R^{1 \times a'}$};
          \node (C2) at ($(B2)-(r)$) {$N$};
          \node (D2) at ($(C2)-(r)$) {$0$};
          \draw[->,thick] (A) --node[above]{$\rho_M$} (B);
          \draw[->,thick] (B) -- (C);
          \draw[->,thick] (C) -- (D);
          
          \draw[->,thick] (A2) --node[above]{$\rho_N$} (B2);
          \draw[->,thick] (B2) -- (C2);
          \draw[->,thick] (C2) -- (D2);
          
          \draw[->,thick,dashed] (B) -- (A2);
          \draw[->,thick] (B) -- (B2);
          \draw[->,thick] (C) --node[left]{$\mu$} (C2);
          \end{tikzpicture}
\end{center}

It follows that computing with finitely presented modules and their homomorphisms
can be replaced by computing with presentations (which are nothing but morphisms in the additive category $\Rows_R$,
see Example \ref{example:rows_R}),
and commutative squares involving presentations (which are simply commutative squares within $\Rows_R$)
considered up to an equivalence relation.
The concept of a Freyd category formalizes this calculus with $\Rows_R$
being replaced by an arbitrary additive category $\AC$.

\begin{construction}[Freyd categories]
Let $\AC$ be an additive category. We create
$\Freyd( \AC )$, the so-called \textbf{Freyd category} of $\AC$.
Its objects consist of morphisms
\[
 (A \stackrel{\rho_A}{\longleftarrow} R_A)
\]
in $\AC$. We think of such morphisms as formally representing
the cokernel of $\rho_A$. Note that neither $R_A$ nor $\rho_A$
do formally depend on $A$, however, we like to decorate these
objects with $A$ as an index and think of them as an encoding
for ``relations'' imposed on $A$.
A morphism between two objects in $\Freyd( \AC )$,
i.e., $(A \stackrel{\rho_A}{\longleftarrow} R_A)$ to $(B \stackrel{\rho_B}{\longleftarrow} R_B)$,
is given by a morphism
\[
 \alpha: A \longrightarrow B
\]
such that $\exists \rho_{\alpha}: R_A \longrightarrow R_B$
making the diagram 
\begin{center}
         \begin{tikzpicture}[label/.style={postaction={
          decorate,
          decoration={markings, mark=at position .5 with \node #1;}},
          mylabel/.style={thick, draw=none, align=center, minimum width=0.5cm, minimum height=0.5cm,fill=white}}]
          \coordinate (r) at (2.5,0);
          \coordinate (u) at (0,2);
          \node (A) {$A$};
          \node (B) at ($(A)+(r)$) {$R_A$};
          \node (C) at ($(A) - (u)$) {$B$};
          \node (D) at ($(B) - (u)$) {$R_B$};
          \draw[->,thick] (B) --node[above]{$\rho_A$} (A);
          \draw[->,thick] (D) --node[above]{$\rho_B$} (C);
          \draw[->,thick] (A) --node[left]{$\alpha$} (C);
          \draw[->,thick,dashed] (B) --node[right]{$\rho_{\alpha}$} (D);
          \end{tikzpicture}
\end{center}
commutative.
The equality of two morphisms 
$A \stackrel{\alpha}{\longrightarrow} B$, $A \stackrel{\alpha'}{\longrightarrow} B$ 
from $(A \stackrel{\rho_A}{\longleftarrow} R_A)$ to $(B \stackrel{\rho_B}{\longleftarrow} R_B)$
is defined by the existence of a $\lambda$ (called \textbf{witness for $\alpha$ and $\alpha'$ being equal}) 
rendering the diagram
\begin{center}
\begin{tikzpicture}[label/.style={postaction={
  decorate,
  decoration={markings, mark=at position .5 with \node #1;}},
  mylabel/.style={thick, draw=none, align=center, minimum width=0.5cm, minimum height=0.5cm,fill=white}}]
  \coordinate (r) at (3.5,0);
  \coordinate (u) at (0,2);
  \node (m) {$B$};
  \node (n) at ($(m)+(r)$) {$R_B$};
  \node (r1) at ($(m) + (u)$) {$A$};
  \draw[->,thick] (n) --node[above]{$\rho_B$} (m);
  \draw[->,thick,dashed] (r1) --node[above]{$\lambda$} (n);
  \draw[->,thick] (r1) --node[left]{$\alpha - \alpha'$} (m);
\end{tikzpicture}
\end{center}
commutative.
Composition and identity morphisms are inherited from $\AC$.
It is easy to check that the notion of equality for morphisms yields an equivalence relation compatible
with composition and identities.
\end{construction}

\begin{remark}
 Two commutative squares
 \begin{center}
  \begin{tabular}{ccc}
    \begin{tikzpicture}[label/.style={postaction={
            decorate,
            decoration={markings, mark=at position .5 with \node #1;}},
            mylabel/.style={thick, draw=none, align=center, minimum width=0.5cm, minimum height=0.5cm,fill=white}},
            baseline=(base)]
            \coordinate (r) at (2.5,0);
            \coordinate (u) at (0,2);
            \node (A) {$A$};
            \node (B) at ($(A)+(r)$) {$R_A$};
            \node (C) at ($(A) - (u)$) {$B$};
            \node (D) at ($(B) - (u)$) {$R_B$};
            \node (base) at ($(B) - 0.5*(u)$) {};
            \draw[->,thick] (B) --node[above]{$\rho_A$} (A);
            \draw[->,thick] (D) --node[above]{$\rho_B$} (C);
            \draw[->,thick] (A) --node[left]{$\alpha$} (C);
            \draw[->,thick,dashed] (B) --node[right]{$\rho_{\alpha}$} (D);
    \end{tikzpicture} 
          & 
          \hspace{1em}and \hspace{1em}
          &
    \begin{tikzpicture}[label/.style={postaction={
            decorate,
            decoration={markings, mark=at position .5 with \node #1;}},
            mylabel/.style={thick, draw=none, align=center, minimum width=0.5cm, minimum height=0.5cm,fill=white}},
            baseline=(base)]
            \coordinate (r) at (2.5,0);
            \coordinate (u) at (0,2);
            \node (A) {$A$};
            \node (B) at ($(A)+(r)$) {$R_A$};
            \node (C) at ($(A) - (u)$) {$B$};
            \node (D) at ($(B) - (u)$) {$R_B$};
            \node (base) at ($(B) - 0.5*(u)$) {};
            \draw[->,thick] (B) --node[above]{$\rho_A$} (A);
            \draw[->,thick] (D) --node[above]{$\rho_B$} (C);
            \draw[->,thick] (A) --node[left]{$\alpha$} (C);
            \draw[->,thick,dashed] (B) --node[right]{$\rho_{\alpha}'$} (D);
    \end{tikzpicture}  
  \end{tabular}
 \end{center}
are equal as morphisms in $\Freyd( \AC )$ with $0: A \longrightarrow R_B$ as a witness,
which is why we depict the arrows corresponding to $\rho_{\alpha}, \rho_{\alpha}'$ with a dashed line:
they merely need to exist, but do not otherwise contribute to the actual morphism.
\end{remark}

If $R\fpmodl$ denotes the category of finitely presented (left) $R$-modules,
then the discussion in the beginning of this subsection
can be summarized by the existence of an equivalence
\[
 R\fpmodl \simeq \Freyd( \Rows_R ).
\]
Note that the decisive feature of row modules $R^{1 \times a}$
that makes this equivalence work is their projectiveness as $R$-modules.
Thus, if we let $\Proj_R$ denote the full subcategory of the category of $R$-modules
spanned by all finitely presented projective modules,
and if $\AC$ is any full subcategory satisfying
\[
 \Rows_R \subseteq \AC \subseteq \Proj_R,
\]
we still have
\[
 R\fpmodl \simeq \Freyd( \AC ).
\]
If $k$ is a field and $Q$ a quiver,
then $\AsCat(k, Q)^{\oplus}$ (see Example \ref{example:additive_closure_k_Q})
identifies with the full additive subcategory of the category of modules over the path algebra $k[Q]$
generated by the projectives $k[Q]e_v$, where $e_v$ denotes the idempotent associated to the node $v \in Q$.
Since this subcategory contains $k[Q]$ and thus $\Rows_{k[Q]}$, we obtain an equivalence
\[
 k[Q]\fpmodl \simeq \Freyd( \AsCat(k, Q)^{\oplus} ).
\]
The discussion in this subsection neatly generalizes
to finitely presented graded modules.
If $R = \bigoplus_{d \in \Z}R_d$ is a $\Z$-graded ring,
then $\AsCat(\bigoplus_{d \in \Z}R_d)^{\oplus}$ (see Example \ref{example:additive_closure_graded})
identifies with the full additive subcategory of the category of graded $R$-modules 
generated by the shifts $R(d)$ for $d \in \Z$, i.e.,
by the graded modules with graded parts $R(d)_e \coloneqq R_{d+e}$
for all $e \in \Z$, and we again have an equivalence
\[
 R\fpgrmodl \simeq \Freyd( \AsCat(\bigoplus_{d \in \Z}R_d)^{\oplus} ),
\]
with $R\fpgrmodl$ denoting the category of finitely presented graded $R$-modules.

Thus, the abstract study of Freyd categories enables us to study all these computational
models of finitely presented modules in one go.

For an additive category $\AC$, let $\Hom( \AC^{\op}, \Ab )$ denote the category of contravariant additive functors
from $\AC$ into the category of abelian groups $\Ab$.
By Yoneda's lemma, the functor
\[
 \AC \longrightarrow \Hom( \AC^{\op}, \Ab ): A \mapsto (-,A)
\]
is full and faithful, where $(-,A)$ denotes the contravariant $\Hom$-functor. 
Thus, we can think of $\AC$ as the full subcategory of $\Hom( \AC^{\op}, \Ab )$
generated by all representable functors.
Again, by Yoneda's lemma, representable functors are projective objects in $\Hom( \AC^{\op}, \Ab )$,
and a straightforward generalization of the discussion in the beginning of this subsection shows
that we can identify $\Freyd( \AC )$ with the full subcategory of $\Hom( \AC^{\op}, \Ab )$
generated by so-called \emph{finitely presented functors}.
A functor $F: \AC^{\op} \longrightarrow \Ab$ is finitely presented if there exists $A, B \in \AC$
and $\alpha: A \rightarrow B$
and an exact sequence
\begin{center}
         \begin{tikzpicture}[label/.style={postaction={
          decorate,
          decoration={markings, mark=at position .5 with \node #1;}},
          mylabel/.style={thick, draw=none, align=center, minimum width=0.5cm, minimum height=0.5cm,fill=white}}]
          \coordinate (r) at (3.5,0);
          \coordinate (u) at (0,2);
          \node (A) {$(-,A)$};
          \node (B) at ($(A)-(r)$) {$(-,B)$};
          \node (C) at ($(B)-(r)$) {$F$};
          \node (D) at ($(C)-(r)$) {$0$};
          \draw[->,thick] (A) --node[above]{$(-,\alpha)$} (B);
          \draw[->,thick] (B) -- (C);
          \draw[->,thick] (C) -- (D);
          \end{tikzpicture}
\end{center}
in $\Hom( \AC^{\op}, \Ab )$, i.e., $F$ arises as the cokernel of a morphism between representable functors.
Analogously, one defines finitely presented covariant functors on $\AC$, and the category of all such functors
is equivalent to $\Freyd( \AC^{\op} )$.

\begin{example}\label{example:exts}
 If $\AC$ is an abelian category with enough projectives and $A \in \AC$, then 
 \[\Ext^i(A,-): \AC \rightarrow \Ab\]
 if finitely presented for all $i \geq 0$ \cite{A}.
 For example, in order to write $\Ext^1(A,-)$ as an object in $\Freyd( \AC^{\op} )$,
 take any short exact sequence
 \begin{center}
         \begin{tikzpicture}[label/.style={postaction={
          decorate,
          decoration={markings, mark=at position .5 with \node #1;}},
          mylabel/.style={thick, draw=none, align=center, minimum width=0.5cm, minimum height=0.5cm,fill=white}}]
          \coordinate (r) at (3,0);
          \coordinate (u) at (0,2);
          \node (A) {$0$};
          \node (B) at ($(A)-(r)$) {$\Omega^1(A)$};
          \node (C) at ($(B)-(r)$) {$P$};
          \node (D) at ($(C)-(r)$) {$A$};
          \node (E) at ($(D)-(r)$) {$0$};
          \draw[->,thick] (A) -- (B);
          \draw[->,thick] (B) -- (C);
          \draw[->,thick] (C) -- (D);
          \draw[->,thick] (D) -- (E);
          \end{tikzpicture}
\end{center}
 with $P$ projective. Then the morphism $(\Omega^1(A)\longrightarrow P)$
 considered as an object in $\Freyd( \AC^{\op} )$ corresponds to $\Ext^1(A,-)$.
 For higher $\Ext$s, we need to compute more steps of a projective resolution of $A$.
\end{example}

We have seen in this subsection that if we start with a ring $R$ and consider it as a single object category $\AsCat(R)$,
then we can apply a cascade of category constructors
\[
 \Freyd( \Freyd( \AsCat(R)^{\oplus} )^{\op} )
\]
and end up with a category equivalent to finitely presented functors on finitely presented modules over $R$.
Thus, the question of how to compute with finitely presented functors now reduces to the understanding of
how to compute with Freyd categories.

\subsection{Computing with Freyd categories}

We explain how to perform several explicit constructions within Freyd categories, like computing cokernels, kernels, lifts along mono\-morphisms,
and homomorphism structures.
For details about the correctness of these constructions, we refer the reader to \cite{PosFreyd}.

\subsubsection{Equality of morphisms}

Being computable for $\AC$ does by no means imply computability of $\Freyd( \AC )$.
We specify the decisive algorithmic feature of $\AC$ that turns $\Freyd( \AC )$ into a computable category.

\begin{definition}
 We say a category $\AC$ has \textbf{decidable lifts}
 if we have an algorithm that takes as an input a cospan
 \[
  A \stackrel{\alpha}{\longrightarrow} B \stackrel{\gamma}{\longleftarrow} C
 \]
 and either outputs a lift $\lambda: A \rightarrow C$ rendering the diagram
 \begin{center}
    \begin{tikzpicture}[label/.style={postaction={
        decorate,
        decoration={markings, mark=at position .5 with \node #1;}},
        mylabel/.style={thick, draw=none, align=center, minimum width=0.5cm, minimum height=0.5cm,fill=white}},
        baseline=(base)]
        \coordinate (r) at (2.5,0);
        \coordinate (u) at (0,2);
        \node (A) {$A$};
        \node (B) at ($(A)-(u)$) {$B$};
        \node (C) at ($(B) + (r)$) {$C$};
        \draw[->,thick] (A) --node[left]{$\alpha$} (B);
        \draw[->,thick] (C) --node[above]{$\gamma$} (B);
        \draw[->,thick,dashed] (A) --node[above,yshift=0.3em]{$\lambda$} (C);
      \end{tikzpicture}    
 \end{center}
 commutative, or disproves the existence of such a lift.
 \end{definition}

Clearly, whenever an additive category $\AC$ has decidable lifts,
we are able to decide equality in $\Freyd( \AC )$.
 
\begin{example}\label{example:lifts_and_gauss}
Let $k$ be a field with decidable equality of elements. 
Then, the category $\SVec_k$ has decidable lifts:
a cospan in $\SVec_k$ is nothing but a pair of matrices $\alpha, \gamma$ over $k$
having the same number of columns, and we can decide whether there
exists a matrix $\lambda$ over $k$ such that
$\lambda \cdot \gamma = \alpha$ using Gaussian elimination.
\end{example}

\begin{example}
 The following class of examples is vital for constructive algebraic geometry.
 Let $k$ be a field with decidable equality of elements.
 For 
 \[
 R = k[x_1, \dots, x_n]/I,
 \]
 where $I \subseteq k[x_1, \dots, x_n]$ is an ideal,
 Gröbner basis techniques imply that $\Rows_R$ has decidable lifts.
 Moreover, if
 $\mathfrak{p} \subseteq k[x_1, \dots, x_n]/I$ is a prime ideal,
 then for the localization
 \[
 R = (k[x_1, \dots, x_n]/I)_{\mathfrak{p}},
 \]
 $\Rows_R$ has decidable lifts.
 A general algorithm proving this fact can be found in \cite{PosLinSys}.
 Computing lifts in more specialized cases of such rings are treated for example in \cite{BL}
 or \cite{GP}.
\end{example}

We can employ homomorphism structures for making lifts decidable.

\begin{lemma}\label{lemma:lifts_via_hom}
 Let $\AC$ have a $\BC$-homomorphism structure $(H, \nu, 1 )$.
 For a given cospan $A \stackrel{\alpha}{\longrightarrow} B \stackrel{\gamma}{\longleftarrow} C$ in $\AC$,
 there exists a lift
 \begin{center}
    \begin{tikzpicture}[label/.style={postaction={
        decorate,
        decoration={markings, mark=at position .5 with \node #1;}},
        mylabel/.style={thick, draw=none, align=center, minimum width=0.5cm, minimum height=0.5cm,fill=white}},
        baseline=(base)]
        \coordinate (r) at (2.5,0);
        \coordinate (u) at (0,2);
        \node (A) {$A$};
        \node (B) at ($(A)-(u)$) {$B$};
        \node (C) at ($(B) + (r)$) {$C$};
        \draw[->,thick] (A) --node[left]{$\alpha$} (B);
        \draw[->,thick] (C) --node[above]{$\gamma$} (B);
        \draw[->,thick,dashed] (A) --node[above,yshift=0.3em]{$\lambda$} (C);
      \end{tikzpicture}    
 \end{center}
 in $\AC$ if and only if there exists a lift
 \begin{center}
    \begin{tikzpicture}[label/.style={postaction={
        decorate,
        decoration={markings, mark=at position .5 with \node #1;}},
        mylabel/.style={thick, draw=none, align=center, minimum width=0.5cm, minimum height=0.5cm,fill=white}},
        baseline=(base)]
        \coordinate (r) at (4.5,0);
        \coordinate (u) at (0,2);
        \node (A) {$1$};
        \node (B) at ($(A)-(u)$) {$H(A,B)$};
        \node (C) at ($(B) + (r)$) {$H(A,C)$};
        \draw[->,thick] (A) --node[left]{$\nu( \alpha )$} (B);
        \draw[->,thick] (C) --node[above]{$H(A,\gamma)$} (B);
        \draw[->,thick,dashed] (A) --node[above,yshift=0.3em]{$\lambda'$} (C);
      \end{tikzpicture}    
 \end{center}
 in $\BC$. In other words, we can decide lifts in $\AC$ whenever we can decide lifts in $\BC$.
\end{lemma}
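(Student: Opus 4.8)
The plan is to collapse both lift problems onto a single instance of the naturality equation for $\nu$, and then to shuttle between them using that $\nu$ is a bijection. The guiding idea is that the leg $\gamma: C \to B$ of the cospan can be pushed into the second argument of the functor $H$, so that solving the lift in $\AC$ becomes literally the same equation as solving the lift in $\BC$ after applying $\nu$.

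First I would write down the single naturality instance that drives everything. Applying the defining relation of Definition \ref{definition:homomorphism_strucutre} to the composable triple $(\id_A, \lambda, \gamma)$, where $\lambda: A \to C$ is an arbitrary morphism in $\AC$, yields
\[
\nu(\lambda \cdot \gamma) = \nu(\lambda) \cdot H(\id_A, \gamma) = \nu(\lambda) \cdot H(A, \gamma),
\]
where $H(A, \gamma)$ denotes the image of $\gamma$ under the partial functor $H(A, -)$, i.e.\ the arrow $H(A, C) \to H(A, B)$ appearing in the statement.

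For the forward implication, given $\lambda: A \to C$ with $\lambda \cdot \gamma = \alpha$, I would set $\lambda' \coloneqq \nu(\lambda)$ and read off $\lambda' \cdot H(A, \gamma) = \nu(\lambda \cdot \gamma) = \nu(\alpha)$, so that $\lambda'$ solves the $\BC$-side lift. For the reverse implication, given $\lambda': 1 \to H(A, C)$ with $\lambda' \cdot H(A, \gamma) = \nu(\alpha)$, I would invoke that $\nu: \Hom_{\AC}(A, C) \xrightarrow{\sim} \Hom_{\BC}(1, H(A, C))$ is a bijection and put $\lambda \coloneqq \nu^{-1}(\lambda')$. The same naturality identity then gives $\nu(\lambda \cdot \gamma) = \nu(\lambda) \cdot H(A, \gamma) = \lambda' \cdot H(A, \gamma) = \nu(\alpha)$, and injectivity of $\nu$ forces $\lambda \cdot \gamma = \alpha$, as desired.

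There is no real obstacle here: the argument is two one-line computations glued together by the bijectivity of $\nu$. The only point that demands care is keeping the variances straight --- $H$ is contravariant in its first slot and covariant in its second, so one must confirm that $H(A, \gamma)$ is indeed induced by $\gamma$ in the covariant slot and points in the direction $H(A, C) \to H(A, B)$ drawn in the diagram. Once this is settled, the algorithmic consequence is immediate: to decide lifts in $\AC$ one applies $\nu$ and $H$ to transport the cospan into $\BC$, runs the lift-decision algorithm there, and transfers any solution back through $\nu^{-1}$.
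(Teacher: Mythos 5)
Your proposal is correct and is essentially the paper's own proof: the paper likewise observes that, by naturality, $\nu(\lambda \cdot \gamma) = \nu(\lambda)\cdot H(A,\gamma)$, so that the bijection $\nu\colon \Hom_{\AC}(A,C) \to \Hom_{\BC}(1,H(A,C))$ restricts to a bijection between solutions of the two lift problems. You merely spell out the two directions (and the variance check on $H(A,\gamma)$) that the paper leaves implicit.
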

\begin{proof}
It is easy to see that
\[
 \nu: \Hom_{\AC}( A, C ) \longrightarrow \Hom( 1, H(A,C) )
\]
induces a bijection between lifts of the former system and lifts of the latter,
since, by naturality, we have
\begin{align*}
 \nu( \alpha ) = \nu( \lambda \cdot \gamma ) = \nu( \lambda ) \cdot H( A, \gamma ).
\end{align*}
\end{proof}

\begin{example} 
 Let $k$ be a field with decidable equality and let $Q$ be an acyclic quiver.
 Then the $\Rows_k$-homomorphism structure of $\AsCat(k,Q)^{\oplus}$
 described in Example \ref{example:hom_structure_for_quiver},
 the statement in Lemma \ref{lemma:lifts_via_hom},
 and the decidability of lifts in $\Rows_k$ (Example \ref{example:lifts_and_gauss})
 imply the decidability of lifts in $\AsCat(k, Q)^{\oplus}$.
 
 The same holds for $\Z$-graded $k$-algebras $R = \bigoplus_{d \in \Z} R_d$
 with finite dimensional degree-parts, see Example \ref{example:hom_structure_for_graded_R}.
\end{example}

\subsubsection{Cokernels}

Just as the additive closure turns an Ab-category into an additive one,
Freyd categories endow additive categories with cokernels.

\begin{definition}
 Let $\AC$ be an additive category.
 Given $A,B \in \Obj_{\AC}$, $\alpha \in \Hom_{\AC}(A,B)$, 
 a cokernel of $\alpha$ consists of the following data:
 \begin{enumerate}
  \item An object $\CokernelObject( \alpha )$ (\textbf{cokernel object}), also denoted by $\cokernel( \alpha )$,
        and a morphism
        \[
         \CokernelProjection(\alpha) \in \Hom_{\AC}( B, \CokernelObject( \alpha )) \text{\hspace{1em}(\textbf{cokernel projection})}
        \]
        such that $\alpha \cdot \CokernelProjection(\alpha) = 0$.
  \item An algorithm that computes for given $T \in \Obj_{\AC}$, $\tau \in \Hom_{\AC}(B,T)$
        such that $\alpha \cdot \tau = 0$ 
        a morphism 
        \[
          \CokernelColift( \alpha, \tau ) \in \Hom_{\AC}(\cokernel( \alpha ), T) \text{\hspace{1em}(\textbf{cokernel colift})}
        \]
        such that 
        \[
        \CokernelProjection( \alpha ) \cdot \CokernelColift( \alpha, \tau ) = \tau,
        \]
        where $\CokernelColift( \alpha, \tau )$ is uniquely determined (up to equality of morphisms) by this property.
 \end{enumerate}
\end{definition}

\begin{example}
Let $R$ be a ring and let $\rho: R^{1 \times b} \longrightarrow R^{1 \times a}$
be an $R$-module homomorphism.
Then $\cokernel( \rho ) \in R\fpmodl$ is mapped to an object in $\Freyd( \Rows_R )$
via the equivalence
\[
 R\fpmodl \simeq \Freyd( \Rows_R ),
\]
and this object is given, up to isomorphism, by the morphism $\rho$
itself.
In this sense, taking the cokernel of a morphism
between two row modules is a completely formal act.

\end{example}

Every morphism in $\Freyd( \AC )$ has a cokernel
by means of the following construction, whose proof of correctness can be found in
\cite[Section 3.1]{PosFreyd}.
\begin{construction}
 The following algorithm creates cokernel projections in $\Freyd( \AC )$:
 \begin{center}
 \begin{tabular}{ccc}
    \begin{tikzpicture}[label/.style={postaction={
      decorate,
      decoration={markings, mark=at position .5 with \node #1;}},
      mylabel/.style={thick, draw=none, align=center, minimum width=0.5cm, minimum height=0.5cm,fill=white}},
      baseline=(base)]
      \coordinate (r) at (2.5,0);
      \coordinate (u) at (0,2);
      \node (A) {$A$};
      \node (B) at ($(A)+(r)$) {$R_A$};
      \node (C) at ($(A) - (u)$) {$B$};
      \node (D) at ($(B) - (u)$) {$R_B$};
      \node (base) at ($(A) - 0.5*(u)$) {};
      \draw[->,thick] (B) --node[above]{$\rho_A$} (A);
      \draw[->,thick] (D) --node[above]{$\rho_B$} (C);
      \draw[->,thick] (A) --node[left]{$\alpha$} (C);
      \draw[->,thick,dashed] (B) --node[right]{$\rho_{\alpha}$} (D);
    \end{tikzpicture}
    &
      $\stackrel{\CokernelProjection}{\longmapsto}$
    &
    \begin{tikzpicture}[label/.style={postaction={
      decorate,
      decoration={markings, mark=at position .5 with \node #1;}},
      mylabel/.style={thick, draw=none, align=center, minimum width=0.5cm, minimum height=0.5cm,fill=white}},
      baseline=(base)]
      \coordinate (r) at (2.5,0);
      \coordinate (u) at (0,2);
      \node (A) {$B$};
      \node (B) at ($(A)+(r)$) {$R_B$};
      \node (C) at ($(A) - (u)$) {$B$};
      \node (D) at ($(B) - (u)$) {$R_B \oplus A$};
      \node (base) at ($(A) - 0.5*(u)$) {};
      \draw[->,thick] (B) --node[above]{$\rho_B$} (A);
      \draw[->,thick] (D) --node[above]{$\begin{pmatrix}\rho_B \\ \alpha\end{pmatrix}$} (C);
      \draw[->,thick] (A) --node[left]{$\id_B$} (C);
      \draw[->,thick,dashed] (B) --node[right]{$\begin{pmatrix}\id_{R_B} & 0\end{pmatrix}$} (D);
    \end{tikzpicture}
 \end{tabular}    
 \end{center}
 Moreover, for any morphism
 \begin{center}
   \begin{tikzpicture}[label/.style={postaction={
      decorate,
      decoration={markings, mark=at position .5 with \node #1;}},
      mylabel/.style={thick, draw=none, align=center, minimum width=0.5cm, minimum height=0.5cm,fill=white}},
      baseline=(base)]
      \coordinate (r) at (2.5,0);
      \coordinate (u) at (0,2);
      \node (A) {$B$};
      \node (B) at ($(A)+(r)$) {$R_B$};
      \node (C) at ($(A) - (u)$) {$T$};
      \node (D) at ($(B) - (u)$) {$R_T$};
      \node (base) at ($(A) - 0.5*(u)$) {};
      \draw[->,thick] (B) --node[above]{$\rho_B$} (A);
      \draw[->,thick] (D) --node[above]{$\rho_T$} (C);
      \draw[->,thick] (A) --node[left]{$\tau$} (C);
      \draw[->,thick,dashed] (B) --node[right]{$\rho_{\tau}$} (D);
    \end{tikzpicture}
 \end{center}
  and any witness $A \stackrel{\lambda}{\longrightarrow} R_T$ for the composition
 \begin{center}
  \begin{tikzpicture}[label/.style={postaction={
      decorate,
      decoration={markings, mark=at position .5 with \node #1;}},
      mylabel/.style={thick, draw=none, align=center, minimum width=0.5cm, minimum height=0.5cm,fill=white}},
      baseline=(base)]
      \coordinate (r) at (2.5,0);
      \coordinate (u) at (0,2);
      \node (A) {$A$};
      \node (B) at ($(A)+(r)$) {$R_A$};
      \node (C) at ($(A) - (u)$) {$T$};
      \node (D) at ($(B) - (u)$) {$R_T$};
      \node (base) at ($(A) - 0.5*(u)$) {};
      \draw[->,thick] (B) --node[above]{$\rho_A$} (A);
      \draw[->,thick] (D) --node[above]{$\rho_T$} (C);
      \draw[->,thick] (A) --node[left]{$\alpha \cdot \tau$} (C);
      \draw[->,thick,dashed] (B) --node[right]{$\rho_{\alpha} \cdot \rho_{\tau}$} (D);
    \end{tikzpicture}
 \end{center}
    being equal to zero in $\Freyd( \AC )$, we can construct a cokernel colift:
 \begin{center}
    \begin{tikzpicture}[label/.style={postaction={
      decorate,
      decoration={markings, mark=at position .5 with \node #1;}},
      mylabel/.style={thick, draw=none, align=center, minimum width=0.5cm, minimum height=0.5cm,fill=white}},
      baseline=(base)]
      \coordinate (r) at (2.5,0);
      \coordinate (u) at (0,2);
      \node (A) {$B$};
      \node (B) at ($(A)+(r)$) {$R_B \oplus A$};
      \node (C) at ($(A) - (u)$) {$T$};
      \node (D) at ($(B) - (u)$) {$R_T$};
      \node (base) at ($(A) - 0.5*(u)$) {};
      \draw[->,thick] (B) --node[above]{$\begin{pmatrix}\rho_B \\ \alpha\end{pmatrix}$} (A);
      \draw[->,thick] (D) --node[above]{$\rho_T$} (C);
      \draw[->,thick] (A) --node[left]{$\tau$} (C);
      \draw[->,thick,dashed] (B) --node[right]{$\begin{pmatrix}\rho_{\tau} \\ \lambda\end{pmatrix}$} (D);
    \end{tikzpicture}    
 \end{center}
\end{construction}

\subsubsection{Kernels}

Unlike cokernels, kernels in $\Freyd( \AC )$, if they exist,
cannot be constructed formally but only with the help of additional algorithms
in $\AC$.
\begin{definition}
 Let $\AC$ be an additive category.
 Given $A,B \in \Obj_{\AC}$, $\alpha \in \Hom_{\AC}(A,B)$, 
 a \textbf{kernel} of $\alpha$ consists of the following data:
 \begin{enumerate}
 \item  An object $\KernelObject( \alpha ) \in \Obj_{\AC}$ (\textbf{kernel object}), also denoted by $\kernel( \alpha )$,
        and a morphism
        \[\KernelEmbedding( \alpha ) \in \Hom_{\AC}(\KernelObject( \alpha ), A) \text{\hspace{1em}(\textbf{kernel embedding})}\]
        such that $\KernelEmbedding( \alpha ) \cdot \alpha = 0$.
  \item An algorithm that computes for given $T \in \Obj_{\AC}$, $\tau \in \Hom_{\AC}(T,A)$
        such that $\tau \cdot \alpha = 0$
        a morphism 
        \[
        \KernelLift( \alpha, \tau ) \in \Hom_{\AC}(T,\KernelObject( \alpha )) \text{\hspace{1em}(\textbf{kernel lift})}
        \]
        such that
        \[ \KernelLift( \alpha, \tau ) \cdot \KernelEmbedding( \alpha ) = \tau\]
        where $\KernelLift( \alpha, \tau )$ is uniquely determined (up to equality of morphisms) by this property.
  \end{enumerate}
\end{definition}

\begin{remark}\label{remark:freyd_kernels}
 Let $R$ be a ring.
Assume that we can produce for every $R$-module homomorphism
of the form $\rho: R^{1 \times b} \longrightarrow R^{1 \times a}$
another $R$-module homomorphism
\[
 \kappa: R^{1 \times c} \longrightarrow R^{1 \times b}
\]
whose image spans the kernel of $\rho$ as an $R$-module.
Then, by using such a procedure twice,
we are able to construct an exact sequence
\begin{center}
         \begin{tikzpicture}[label/.style={postaction={
          decorate,
          decoration={markings, mark=at position .5 with \node #1;}},
          mylabel/.style={thick, draw=none, align=center, minimum width=0.5cm, minimum height=0.5cm,fill=white}},baseline = (A)]
          \coordinate (r) at (2.5,0);
          \coordinate (u) at (0,2);
          \node (A) {$R^{1 \times c'}$};
          \node (B) at ($(A)-(r)$) {$R^{1 \times c}$};
          \node (C) at ($(B)-(r)$) {$R^{1 \times b}$};
          \node (D) at ($(C)-(r)$) {$R^{1 \times a}$};
          \draw[->,thick] (A) --node[above]{$\kappa'$} (B);
          \draw[->,thick] (B) --node[above]{$\kappa$} (C);
          \draw[->,thick] (C) --node[above]{$\rho$}  (D);
          \end{tikzpicture}
\end{center}
in which $\kappa'$ is a finite presentation of the kernel of $\rho$.
\end{remark}

Abstracting the procedure $\rho \mapsto \kappa$
from $\Rows_R$ to an arbitrary additive category $\AC$
leads to the notion of a weak kernel,
which is defined exactly like a kernel,
but we drop the uniqueness assumption of the kernel lift.
\begin{definition}
 Let $\AC$ be an additive category.
 Given $A,B \in \Obj_{\AC}$, $\alpha \in \Hom_{\AC}(A,B)$, 
 a \textbf{weak kernel} of $\alpha$ consists of the following data:
 \begin{enumerate}
 \item  An object $\weakkernel( \alpha ) \in \Obj_{\AC}$ (\textbf{weak kernel object})
        and a morphism
        \[\WeakKernelEmbedding( \alpha ) \in \Hom_{\AC}(\weakkernel( \alpha ), A) \text{\hspace{1em}(\textbf{weak kernel embedding})}\]
        such that $\WeakKernelEmbedding( \alpha ) \cdot \alpha = 0$.
  \item An algorithm that computes for given $T \in \Obj_{\AC}$, $\tau \in \Hom_{\AC}(T,A)$
        such that $\tau \cdot \alpha = 0$
        a morphism 
        \[
        \WeakKernelLift( \alpha, \tau ) \in \Hom_{\AC}(T,\weakkernel( \alpha )) \text{\hspace{1em}(\textbf{weak kernel lift})}\]
        such that
        \[ \WeakKernelLift( \alpha, \tau ) \cdot \WeakKernelEmbedding( \alpha ) = \tau.\]
  \end{enumerate}
\end{definition}

\begin{example}
 We unravel the definition of a weak kernel in the concrete case
 where $R$ is a ring and $\AC = \Rows_R$.
 So, given a matrix $R^{1 \times b} \stackrel{\rho}{\longrightarrow} R^{1 \times a}$,
 i.e., a morphism in $\Rows_R$,
 a weak kernel of $\rho$ consists of
 \begin{enumerate}
  \item an object $R^{1 \times c}$,
  \item a matrix $R^{1 \times c} \stackrel{\kappa}{\longrightarrow} R^{1 \times b}$ such that $\kappa \cdot \rho = 0$,
  \item and for every other matrix $R^{1 \times t} \stackrel{\tau}{\longrightarrow} R^{1 \times b}$ such that $\tau \cdot \rho = 0$,
  we can find a lift $R^{1 \times t} \stackrel{u(\tau)}{\longrightarrow} R^{1 \times c}$
  making the diagram
  \begin{center}
    \begin{tikzpicture}[label/.style={postaction={
            decorate,
            decoration={markings, mark=at position .5 with \node #1;}},
            mylabel/.style={thick, draw=none, align=center, minimum width=0.5cm, minimum height=0.5cm,fill=white}}]
            \coordinate (r) at (2.5,0);
            \coordinate (u) at (0,1.5);
            \node (K) {$R^{1 \times c}$};
            \node (A) at ($(K)+(r) - 0.5*(u)$) {$R^{1 \times b}$};
            \node (B) at ($(A)+(r)$) {$R^{1 \times a}$};
            \node (T) at ($(K) - (u)$) {$R^{1 \times t}$};
            \draw[->,thick] (K) --node[above]{$\kappa$} (A);
            \draw[->,thick] (A) --node[above]{$\rho$} (B);
            \draw[->,thick,dashed] (T) --node[left]{$u(\tau)$} (K);
            \draw[->,thick] (T) --node[below]{$\tau$} (A);
    \end{tikzpicture}
  \end{center}
  commutative.
  In matrix terms, this means that the rows of $\kappa$
  have to span the row kernel (also called \textbf{syzygies}) of $\rho$, since
  we can express every collection of rows $\tau$ lying in the row kernel of $\rho$
  as a linear combination (given by $u(\tau)$) of the rows in $\kappa$.
 \end{enumerate}
 But since these linear combinations do not have to be uniquely determined,
 we deal with \emph{weak} kernels here.
 Thus, the existence of weak kernels in $\Rows_R$ is 
 equivalent to
 finding a finite generating system for row kernels of matrices over $R$.
 A ring for which row kernels are finitely generated is called \textbf{(left-)coherent}.
\end{example}

\begin{remark}
 Algorithms to compute syzygies in $\Rows_R$ mainly rely on the theory of Gröbner bases.
 For the cases of quotients of commutative polynomial rings (both graded and non-graded), see, e.g., \cite{GP}.
 For non-commutative cases (including finite dimensional quotients of path algebras), see, e.g., \cite{Green99}.
\end{remark}

Our goal is to describe kernels in $\Freyd( \AC )$
with the help of weak kernels in $\AC$.
In order to be able to do so,
we need the construction of weak pullbacks from weak kernels.

\begin{definition}\label{definition:weak_pullbacks}
 Let $\AC$ be an additive category. Given a cospan $A \stackrel{\alpha}{\longrightarrow} B \stackrel{\gamma}{\longleftarrow} C$ in $\AC$,
 a \textbf{weak pullback} consists of the following data:
 \begin{enumerate}
  \item An object $\WeakPullback( \alpha, \gamma ) \in \AC$.
  \item Morphisms
  \[
   \firstproj{\alpha}{\gamma}: \WeakPullback( \alpha, \gamma ) \rightarrow A
  \]
   and 
   \[
   \secondproj{\alpha}{\gamma}: \WeakPullback( \alpha, \gamma ) \rightarrow C
   \]
   such that 
   \[\firstproj{\alpha}{\gamma} \cdot \alpha = \secondproj{\alpha}{\gamma} \cdot \gamma.\]
  \item An algorithm that computes for $T \in \AC$ and morphisms $p: T \rightarrow A$, $q: T \rightarrow C$ with $p \cdot \alpha = q \cdot \gamma$
        a morphism 
        \[
         \bmatrowind{\alpha}{p}{q}{\gamma}: T \rightarrow \WeakPullback( \alpha, \gamma )
        \]
        satisfying
        \begin{center}
        \begin{tabular}{ccccc}
         $p = \bmatrowind{\alpha}{p}{q}{\gamma} \cdot \firstproj{\alpha}{\gamma}$
         & & and & &
         $q = \bmatrowind{\alpha}{p}{q}{\gamma} \cdot \secondproj{\alpha}{\gamma}$.
        \end{tabular}
        \end{center}
 \end{enumerate}
 
\end{definition}

\begin{remark}
 The only difference between pullbacks and weak pullbacks lies
 in the uniqueness of the induced morphism, which is missing in the case of weak pullbacks.
\end{remark}

\begin{construction}\label{construction:weak_pullbacks_from_weak_kernels}
 We show how to construct weak pullbacks from weak kernels in an additive category $\AC$.
 Let
    \begin{center}
\begin{tikzpicture}[label/.style={postaction={
      decorate,
      decoration={markings, mark=at position .5 with \node #1;}}}]
      \coordinate (r) at (1.5,0);
      \coordinate (u) at (0,1.25);
      \node (A) {$A$};
      \node (B) at ($(A)+(r)$) {$B$};
      \node (C) at ($(B) +(u)$) {$C$};
      \draw[->,thick] (A) -- node[below]{$\alpha$} (B);
      \draw[->,thick] (C) -- node[right]{$\gamma$} (B);
\end{tikzpicture}
\end{center}
be a cospan.
We define the \bfindex{diagonal difference}
\[
 \delta \coloneqq \begin{pmatrix}
             \alpha \\ -\gamma
           \end{pmatrix}
           : A \oplus C \rightarrow B.
\]
Then, we may set
\begin{enumerate}
 \item the weak pullback object
 \[\WeakPullback( \alpha, \gamma )\coloneqq \weakkernel( \delta ),\]
 \item the first weak pullback projection
 \begin{center}
        \begin{tikzpicture}[label/.style={postaction={
                decorate,
                decoration={markings, mark=at position .5 with \node #1;}},
                mylabel/.style={thick, draw=none, align=center, minimum width=0.5cm, minimum height=0.5cm,fill=white}}]
                \coordinate (r) at (2.5,0);
                \coordinate (u) at (0,1.5);
                \node (K) {$\WeakPullback( \alpha, \gamma )$};
                \node (A) at ($(K)+3.2*(r)$) {$A \oplus C$};
                \node (B) at ($(A)+(r)$) {$A$,};
                \node (T) at ($(K)-1.1*(r)$) {$\firstproj{\alpha}{\gamma}$};
                \draw[->,thick] (K) --node[above]{$\WeakKernelEmbedding( \delta )$} (A);
                \draw[->,thick] (A) --node[above]{$\pmatcol{1}{0}$} (B);
                \draw[draw=none] (T) --node[]{$\coloneqq$} (K);
        \end{tikzpicture}
      \end{center}
 \item the second weak pullback projection
 \begin{center}
        \begin{tikzpicture}[label/.style={postaction={
                decorate,
                decoration={markings, mark=at position .5 with \node #1;}},
                mylabel/.style={thick, draw=none, align=center, minimum width=0.5cm, minimum height=0.5cm,fill=white}}]
                \coordinate (r) at (2.5,0);
                \coordinate (u) at (0,1.5);
                \node (K) {$\WeakPullback( \alpha, \gamma )$};
                \node (A) at ($(K)+3.2*(r)$) {$A \oplus C$};
                \node (B) at ($(A)+(r)$) {$C$.};
                \node (T) at ($(K)-1.1*(r)$) {$\secondproj{\alpha}{\gamma}$};
                \draw[->,thick] (K) --node[above]{$\WeakKernelEmbedding( \delta )$} (A);
                \draw[->,thick] (A) --node[above]{$\pmatcol{0}{1}$} (B);
                \draw[draw=none] (T) --node[]{$\coloneqq$} (K);
        \end{tikzpicture}
      \end{center}
\end{enumerate}
Moreover, for any pair $p: T \rightarrow A$, $q: T \rightarrow C$ such that $p \cdot \alpha = q \cdot \gamma$, we set
\begin{enumerate}
    \setcounter{enumi}{3}
      \item  the morphism into the weak pullback
         \begin{center}
    \begin{tikzpicture}[label/.style={postaction={
            decorate,
            decoration={markings, mark=at position .5 with \node #1;}},
            mylabel/.style={thick, draw=none, align=center, minimum width=0.5cm, minimum height=0.5cm,fill=white}}]
            \coordinate (r) at (3.5,0);
            \coordinate (u) at (0,2);
            \node (K) {$\WeakPullback( \alpha, \gamma )$};
            \node (A) at ($(K)+(r) - 0.5*(u)$) {$A \oplus C$};
            \node (B) at ($(A)+(r)$) {$B$.};
            \node (T) at ($(K) - (u)$) {$T$};
            \draw[->,thick] (K) --node[above]{} (A);
            \draw[->,thick] (A) --node[above]{$\delta$} (B);
            \draw[->,thick,dashed] (T) --node[left]{$\bmatrowind{\alpha}{p}{q}{\gamma} \coloneqq \WeakKernelLift\left( \delta, \pmatrow{p}{q} \right)$} (K);
            \draw[->,thick] (T) --node[below,xshift = 0.5em]{$\pmatrow{p}{q}$} (A);
    \end{tikzpicture}
  \end{center}
\end{enumerate}

\end{construction}
\begin{proof}[Correctness of the construction]
The equation $p \cdot \alpha = q \cdot \gamma$
is equivalent to 
$\begin{pmatrix} p & q \end{pmatrix} \cdot \delta = 0$.
\end{proof}

\begin{example}
 Let $R$ be a ring.
 Computing the weak pullback of two morphisms in $\Rows_R$,
 i.e., of two matrices $\alpha, \gamma$ over $R$ having the same number of columns,
 amounts to computing the syzygies of the stacked matrix
 \[
  \pmatcol{\alpha}{-\gamma}.
 \]
\end{example}

\begin{construction}[Kernels in Freyd categories]\label{construction:kernels_in_Freyd}
 Let $\AC$ be an additive category in which we can compute weak kernels.
 By Construction \ref{construction:weak_pullbacks_from_weak_kernels},
 this means that we are able to construct weak pullbacks.
 We will use these for the construction of kernels in the Freyd category.
 Given a morphism
 \begin{center}
     \begin{tikzpicture}[label/.style={postaction={
      decorate,
      decoration={markings, mark=at position .5 with \node #1;}},
      mylabel/.style={thick, draw=none, align=center, minimum width=0.5cm, minimum height=0.5cm,fill=white}},
      baseline=(base)]
      \coordinate (r) at (2.5,0);
      \coordinate (u) at (0,2);
      \node (A) {$A$};
      \node (B) at ($(A)+(r)$) {$R_A$};
      \node (C) at ($(A) - (u)$) {$B$};
      \node (D) at ($(B) - (u)$) {$R_B$};
      \node (base) at ($(A) - 0.5*(u)$) {};
      \draw[->,thick] (B) --node[above]{$\rho_A$} (A);
      \draw[->,thick] (D) --node[above]{$\rho_B$} (C);
      \draw[->,thick] (A) --node[left]{$\alpha$} (C);
      \draw[->,thick,dashed] (B) --node[right]{$\rho_{\alpha}$} (D);
    \end{tikzpicture}
  \end{center}
  in $\Freyd( \AC )$.
  Generalizing the idea given in Remark \ref{remark:freyd_kernels}, we
  can construct its kernel object and kernel embedding as
  \begin{center}
     \begin{tikzpicture}[label/.style={postaction={
      decorate,
      decoration={markings, mark=at position .5 with \node #1;}},
      mylabel/.style={thick, draw=none, align=center, minimum width=0.5cm, minimum height=0.5cm,fill=white}},
      baseline=(base)]
      \coordinate (r) at (7,0);
      \coordinate (u) at (0,2.5);
      \node (A) {$\WeakPullback( \rho_B, \alpha )$};
      \node (B) at ($(A)+(r)$) {$\WeakPullback( \kappa, \rho_A )$};
      \node (C) at ($(A) - (u)$) {$A$};
      \node (D) at ($(B) - (u)$) {$R_A$};
      \node (base) at ($(A) - 0.5*(u)$) {};
      \draw[->,thick] (B) --node[above]{$\firstproj{\kappa}{\alpha}$} (A);
      \draw[->,thick] (D) --node[above]{$\rho_A$} (C);
      \draw[->,thick] (A) --node[left]{$\kappa \coloneqq \secondproj{\rho_B}{\alpha}$} (C);
      \draw[->,thick,dashed] (B) --node[right]{$\secondproj{\kappa}{\alpha}$} (D);
    \end{tikzpicture}
  \end{center}
  If we have a test morphism
  \begin{center}
     \begin{tikzpicture}[label/.style={postaction={
      decorate,
      decoration={markings, mark=at position .5 with \node #1;}},
      mylabel/.style={thick, draw=none, align=center, minimum width=0.5cm, minimum height=0.5cm,fill=white}},
      baseline=(base)]
      \coordinate (r) at (2.5,0);
      \coordinate (u) at (0,2);
      \node (A) {$T$};
      \node (B) at ($(A)+(r)$) {$R_T$};
      \node (C) at ($(A) - (u)$) {$A$};
      \node (D) at ($(B) - (u)$) {$R_A$};
      \node (base) at ($(A) - 0.5*(u)$) {};
      \draw[->,thick] (B) --node[above]{$\rho_T$} (A);
      \draw[->,thick] (D) --node[above]{$\rho_A$} (C);
      \draw[->,thick] (A) --node[left]{$\tau$} (C);
      \draw[->,thick,dashed] (B) --node[right]{$\rho_{\tau}$} (D);
    \end{tikzpicture}
  \end{center}
  whose composition with our first morphism yields zero in $\Freyd( \AC )$, i.e.,
  there exists a lift
  \begin{center}
\begin{tikzpicture}[label/.style={postaction={
  decorate,
  decoration={markings, mark=at position .5 with \node #1;}},
  mylabel/.style={thick, draw=none, align=center, minimum width=0.5cm, minimum height=0.5cm,fill=white}}]
  \coordinate (r) at (3.5,0);
  \coordinate (u) at (0,2);
  \node (m) {$B$};
  \node (n) at ($(m)+(r)$) {$R_B$,};
  \node (r1) at ($(m) + (u)$) {$T$};
  \draw[->,thick] (n) --node[above]{$\rho_B$} (m);
  \draw[->,thick,dashed] (r1) --node[above]{$\sigma$} (n);
  \draw[->,thick] (r1) --node[left]{$\tau \cdot \alpha$} (m);
\end{tikzpicture}
\end{center}
then we can construct the kernel lift
\begin{center}
     \begin{tikzpicture}[label/.style={postaction={
      decorate,
      decoration={markings, mark=at position .5 with \node #1;}},
      mylabel/.style={thick, draw=none, align=center, minimum width=0.5cm, minimum height=0.5cm,fill=white}},
      baseline=(base)]
      \coordinate (r) at (7,0);
      \coordinate (u) at (0,2.5);
      \node (A) {$\WeakPullback( \rho_B, \alpha )$};
      \node (B) at ($(A)+(r)$) {$\WeakPullback( \kappa, \rho_A )$};
      \node (C) at ($(A) + (u)$) {$T$};
      \node (D) at ($(B) + (u)$) {$R_T$};
      \node (base) at ($(A) - 0.5*(u)$) {};
      \draw[->,thick] (B) --node[above]{$\firstproj{\kappa}{\alpha}$} (A);
      \draw[->,thick] (D) --node[above]{$\rho_A$} (C);
      \draw[->,thick] (C) --node[left]{$\bmatrowind{\rho_B}{\sigma}{\tau}{\alpha}$} (A);
      \draw[->,thick,dashed] (D) --node[right]{$\bmatrowind{\kappa}{ \bmatrowind{\rho_B}{\sigma}{\tau}{\alpha} }{ \rho_{\tau} }{ \rho_A }$} (B);
    \end{tikzpicture}
  \end{center}
\end{construction}
\begin{proof}[Correctness of the construction]
 See \cite[Section 3.2]{PosFreyd}.
\end{proof}

\subsubsection{The abelian case}

Knowing how to construct kernels and cokernels in Freyd categories
allows us to construct pullbacks and pushouts: for pullbacks, we can proceed analogously
to Construction \ref{construction:weak_pullbacks_from_weak_kernels}. For pushouts,
we can proceed dually.

The construction of kernels in $\Freyd( \AC )$ relies on having weak kernels in $\AC$.
However, even more can be computed once $\AC$ has weak kernels:

\begin{theorem}[\cite{FreydRep}]\label{theorem:Freyd_main}
 $\Freyd( \AC )$ is abelian if and only if $\AC$ has weak kernels.
\end{theorem}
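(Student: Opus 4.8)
The plan is to pass through the Yoneda embedding and realize $\Freyd(\AC)$ as the full subcategory $\fp(\AC)$ of finitely presented functors inside the abelian category $\mathcal{F} \coloneqq \Hom(\AC^{\op}, \Ab)$, as recalled in the discussion preceding Example \ref{example:exts}. There the representable functors $(-,A)$ are projective in $\mathcal{F}$, they lie in $\fp(\AC)$, and every finitely presented functor is a quotient of a representable. A standard fact then organizes the whole proof: a full additive subcategory $\mathcal{B}$ of an abelian category $\mathcal{A}$ that contains a zero object and is closed under the kernels and cokernels formed in $\mathcal{A}$ is itself abelian, with monomorphisms, epimorphisms and the image factorization all inherited from $\mathcal{A}$. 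Indeed, a morphism is monic in $\mathcal{B}$ iff its $\mathcal{A}$-kernel (which lies in $\mathcal{B}$) vanishes, and dually for epimorphisms, while the comparison $\coim \to \im$ is the one computed in $\mathcal{A}$, hence an isomorphism. Thus the entire statement reduces to deciding when $\fp(\AC)$ is closed under kernels and cokernels in $\mathcal{F}$.

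For the direction that weak kernels imply abelianness, I would first note that $\Freyd(\AC)$ is additive: the zero object, biproducts and the $\Ab$-enrichment are all inherited entrywise from $\AC$, and this is routine. Closure of $\fp(\AC)$ under cokernels in $\mathcal{F}$ holds with no hypothesis on $\AC$; it is the content of the purely formal cokernel construction. The only nontrivial point is closure under kernels. Given a morphism $\phi\colon F \to G$ of finitely presented functors, I would present $F$ and $G$ by representables and use weak kernels in $\AC$ exactly as in Remark \ref{remark:freyd_kernels}: one application of $\rho \mapsto \kappa$ yields a representable surjecting onto $\kernel(\phi)$ (finite generation), and a second application produces a finite presentation of that kernel. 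This is precisely what Construction \ref{construction:kernels_in_Freyd} carries out internally, via the weak pullbacks of Construction \ref{construction:weak_pullbacks_from_weak_kernels}. Hence $\kernel(\phi) \in \fp(\AC)$, the ambient $\mathcal{F}$-kernel lands in the subcategory, and by the standard fact above $\Freyd(\AC)$ is abelian.

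For the converse, suppose $\Freyd(\AC)$ is abelian; I must manufacture weak kernels in $\AC$ from genuine kernels in $\Freyd(\AC)$. Given $\alpha\colon A \to B$ in $\AC$, regard it as a morphism between the representables $(A \leftarrow 0)$ and $(B \leftarrow 0)$ and form its kernel $\iota\colon N \to (A \leftarrow 0)$ in the abelian category $\Freyd(\AC)$. Since every object is a quotient of a representable, choose an epimorphism $\pi\colon (W \leftarrow 0) \twoheadrightarrow N$ from a representable; by fullness and faithfulness of the embedding, the composite $\pi \cdot \iota$ is the image of a morphism $\kappa\colon W \to A$ in $\AC$. I claim $\kappa$ is a weak kernel of $\alpha$. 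Faithfulness transports $\kappa \cdot \alpha = 0$ from $\Freyd(\AC)$ back to $\AC$. For the weak lifting property, a morphism $\tau\colon T \to A$ in $\AC$ with $\tau \cdot \alpha = 0$ factors through $\iota$ by the universal property of the kernel; since $(T \leftarrow 0)$ is projective and $\pi$ is epic, this factorization lifts along $\pi$ to a morphism $(T \leftarrow 0) \to (W \leftarrow 0)$, i.e.\ to some $\widetilde{\tau}\colon T \to W$ in $\AC$ with $\widetilde{\tau} \cdot \kappa = \tau$. No uniqueness is asserted, which is exactly the weak kernel property.

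The step I expect to be the real obstacle is the closure under kernels in the forward direction, because it is where the abelian axioms beyond ``has kernels and cokernels'' — normality of monomorphisms and epimorphisms, equivalently the isomorphy of $\coim \to \im$ — are actually purchased. Everything else (additivity, formal cokernels, and the Yoneda bookkeeping) is soft; the single hypothesis of weak kernels is precisely what upgrades finite generation of kernels of presented functors to finite presentation, and thereby makes the finitely presented functors an abelian subcategory of $\mathcal{F}$. A fully internal alternative would verify normality directly from Constructions \ref{construction:weak_pullbacks_from_weak_kernels} and \ref{construction:kernels_in_Freyd}, but that route is considerably more calculational, and I would avoid it in favor of the embedding argument.
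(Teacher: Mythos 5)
Your proof is correct, and it takes a genuinely different route from the one the paper relies on. The paper does not prove Theorem \ref{theorem:Freyd_main} at all: it cites \cite{FreydRep}, and what functions as the constructive substance of the ``if'' direction is distributed over the computing subsection --- the formal cokernel construction, Construction \ref{construction:weak_pullbacks_from_weak_kernels}, Construction \ref{construction:kernels_in_Freyd}, and the lift-along-monomorphism construction --- with correctness delegated to \cite{PosFreyd}. That is an \emph{internal} verification of the abelian axioms, carried out entirely inside $\Freyd(\AC)$, and it is what the paper needs, since the ambient category $\Hom(\AC^{\op},\Ab)$ is not a computationally accessible object. Your argument is instead the classical external one (essentially Freyd's original): embed $\Freyd(\AC)$ as the finitely presented functors, invoke the criterion that a full additive subcategory closed under ambient kernels and cokernels is abelian with inherited exact structure, and get the converse from the fact that the objects $(A \leftarrow 0)$ are projective and every object is a quotient of one. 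What your route buys is brevity and transparency, especially in the ``only if'' direction, where your kernel-of-$[\alpha]$, epi-from-a-representable, projectivity argument is clean and complete; what the paper's route buys is explicit algorithms (witnesses, weak pullbacks) in place of existence statements.

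Two steps in your sketch deserve to be made explicit, though neither is a genuine gap. First, your exact-subcategory criterion requires closure under \emph{ambient} kernels, and a kernel formed within the full subcategory need not a priori coincide with the ambient one; so the real content of your appeal to Construction \ref{construction:kernels_in_Freyd} is the pointwise Yoneda check that the object built from two weak pullbacks evaluates, at each $X \in \AC$, to the kernel of $\phi_X$ --- one weak pullback supplies a surjection $\Hom_{\AC}(X,P) \twoheadrightarrow \kernel(\phi)(X)$, the second supplies its relations. Second, projectivity of $(T \leftarrow 0)$ in $\Freyd(\AC)$ should get its one-line proof: if $e \colon (X \leftarrow R_X) \to (Y \leftarrow R_Y)$ is an epimorphism, its formal cokernel $(Y \leftarrow R_Y \oplus X)$ vanishes, so $\id_Y = \lambda_1 \cdot \rho_Y + \lambda_2 \cdot e$ for some $\lambda_1 \colon Y \to R_Y$, $\lambda_2 \colon Y \to X$, and $f \mapsto f \cdot \lambda_2$ produces the required lift (with witness $f \cdot \lambda_1$). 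With these two remarks spelled out, your proof is a complete and correct alternative to the cited one.
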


Here is the definition of an abelian category
as it can be found in textbooks like \cite{weihom}:
an abelian category
is an additive category $\AC$
with kernels and cokernels
such that
\begin{enumerate}
 \item every mono is the kernel of its cokernel,
 \item every epi is the cokernel of its kernel.
\end{enumerate}

Let us unravel these new requirements from an algorithmic point of view.
The first statement tells us that whenever we are
given a monomorphism $\alpha \in  \Hom_{\AC}(A,B)$,
it should have the same categorical properties as the kernel embedding of
the morphism $\CokernelProjection( \alpha )$.
Since we are able to compute kernel lifts for a given kernel embedding,
we have to be able to compute such lifts for $\alpha$ as well.
Thus, an algorithmic rereading of the first statement is given as follows:
an abelian category comes equipped with an algorithm
that computes for a given monomorphism $\alpha \in  \Hom_{\AC}(A,B)$ and 
given morphism $\tau \in \Hom_{\AC}(T,B)$ such that $\tau \cdot \CokernelProjection( \alpha ) = 0$
the \textbf{lift along a monomorphism} $u \in \Hom_{\AC}( T,A )$ (i.e., $u \cdot \alpha = \tau$).

Dually, the second statement can be rephrased as:
an abelian category comes equipped with an algorithm
that computes for a given epimorphism $\alpha \in \Hom_{\AC}(A,B)$ and 
given morphism $\tau \in \Hom_{\AC}(A,T)$ such that $\KernelEmbedding( \alpha ) \cdot \tau = 0$
the \textbf{colift along an epimorphism} $u \in \Hom_{\AC}(B,T)$ (i.e., $\alpha \cdot u = \tau$).

We will show how to compute lifts along monomorphisms in $\Freyd( \AC )$.

\begin{remark}\label{remark:witness_mono}
 Suppose given a monomorphism
 \begin{center}
     \begin{tikzpicture}[label/.style={postaction={
      decorate,
      decoration={markings, mark=at position .5 with \node #1;}},
      mylabel/.style={thick, draw=none, align=center, minimum width=0.5cm, minimum height=0.5cm,fill=white}},
      baseline=(base)]
      \coordinate (r) at (2.5,0);
      \coordinate (u) at (0,2);
      \node (A) {$A$};
      \node (B) at ($(A)+(r)$) {$R_A$};
      \node (C) at ($(A) - (u)$) {$B$};
      \node (D) at ($(B) - (u)$) {$R_B$};
      \node (base) at ($(A) - 0.5*(u)$) {};
      \draw[->,thick] (B) --node[above]{$\rho_A$} (A);
      \draw[->,thick] (D) --node[above]{$\rho_B$} (C);
      \draw[->,thick] (A) --node[left]{$\alpha$} (C);
      \draw[->,thick,dashed] (B) --node[right]{$\rho_{\alpha}$} (D);
    \end{tikzpicture}
  \end{center}
  in $\Freyd( \AC )$.
  Then its kernel embedding (see Construction \ref{construction:kernels_in_Freyd})
  \begin{center}
     \begin{tikzpicture}[label/.style={postaction={
      decorate,
      decoration={markings, mark=at position .5 with \node #1;}},
      mylabel/.style={thick, draw=none, align=center, minimum width=0.5cm, minimum height=0.5cm,fill=white}},
      baseline=(base)]
      \coordinate (r) at (7,0);
      \coordinate (u) at (0,2.5);
      \node (A) {$\WeakPullback( \rho_B, \alpha )$};
      \node (B) at ($(A)+(r)$) {$\WeakPullback( \kappa, \rho_A )$};
      \node (C) at ($(A) - (u)$) {$A$};
      \node (D) at ($(B) - (u)$) {$R_A$};
      \node (base) at ($(A) - 0.5*(u)$) {};
      \draw[->,thick] (B) --node[above]{$\firstproj{\kappa}{\alpha}$} (A);
      \draw[->,thick] (D) --node[above]{$\rho_A$} (C);
      \draw[->,thick] (A) --node[left]{$\kappa \coloneqq \secondproj{\rho_B}{\alpha}$} (C);
      \draw[->,thick,dashed] (B) --node[right]{$\secondproj{\kappa}{\alpha}$} (D);
    \end{tikzpicture}
  \end{center}
  is zero in $\Freyd( \AC )$. We call a witness for this kernel embedding being zero,
  which is nothing but a lift
  \begin{center}
     \begin{tikzpicture}[label/.style={postaction={
      decorate,
      decoration={markings, mark=at position .5 with \node #1;}},
      mylabel/.style={thick, draw=none, align=center, minimum width=0.5cm, minimum height=0.5cm,fill=white}},
      baseline=(base)]
      \coordinate (r) at (7,0);
      \coordinate (u) at (0,2.5);
      \node (A) {$\WeakPullback( \rho_B, \alpha )$};
      \node (B) at ($(A)+(r)$) {};
      \node (C) at ($(A) - (u)$) {$A$};
      \node (D) at ($(B) - (u)$) {$R_A$};

      \node (base) at ($(A) - 0.5*(u)$) {};
      
      \draw[->,thick] (D) --node[above]{$\rho_A$} (C);
      \draw[->,thick] (A) --node[left]{$\secondproj{\rho_B}{\alpha}$} (C);
      \draw[->,thick,dashed] (A) --node[above]{$\sigma$} (D);
    \end{tikzpicture}
  \end{center}
  
  a \textbf{witness for being a monomorphism} of our original morphism.
\end{remark}

\begin{construction}[Lift along monomorphism in Freyd categories]
 Let
 \begin{center}
     \begin{tikzpicture}[label/.style={postaction={
      decorate,
      decoration={markings, mark=at position .5 with \node #1;}},
      mylabel/.style={thick, draw=none, align=center, minimum width=0.5cm, minimum height=0.5cm,fill=white}},
      baseline=(base)]
      \coordinate (r) at (2.5,0);
      \coordinate (u) at (0,2);
      \node (A) {$A$};
      \node (B) at ($(A)+(r)$) {$R_A$};
      \node (C) at ($(A) - (u)$) {$B$};
      \node (D) at ($(B) - (u)$) {$R_B$};
      \node (base) at ($(A) - 0.5*(u)$) {};
      \draw[->,thick] (B) --node[above]{$\rho_A$} (A);
      \draw[->,thick] (D) --node[above]{$\rho_B$} (C);
      \draw[->,thick] (A) --node[left]{$\alpha$} (C);
      \draw[->,thick,dashed] (B) --node[right]{$\rho_{\alpha}$} (D);
    \end{tikzpicture}
  \end{center}
  be a monomorphism in $\Freyd( \AC )$ together with a witness for being a monomorphism (see Remark \ref{remark:witness_mono})
  \[
   \sigma: \WeakPullback( \rho_B, \alpha ) \longrightarrow R_A.
  \]
  Moreover, let
  \begin{center}
     \begin{tikzpicture}[label/.style={postaction={
      decorate,
      decoration={markings, mark=at position .5 with \node #1;}},
      mylabel/.style={thick, draw=none, align=center, minimum width=0.5cm, minimum height=0.5cm,fill=white}},
      baseline=(base)]
      \coordinate (r) at (2.5,0);
      \coordinate (u) at (0,2);
      \node (A) {$T$};
      \node (B) at ($(A)+(r)$) {$R_T$};
      \node (C) at ($(A) - (u)$) {$B$};
      \node (D) at ($(B) - (u)$) {$R_B$};
      \node (base) at ($(A) - 0.5*(u)$) {};
      \draw[->,thick] (B) --node[above]{$\rho_T$} (A);
      \draw[->,thick] (D) --node[above]{$\rho_B$} (C);
      \draw[->,thick] (A) --node[left]{$\tau$} (C);
      \draw[->,thick,dashed] (B) --node[right]{$\rho_{\tau}$} (D);
    \end{tikzpicture}
  \end{center}
  be a test morphism, i.e., a morphism in $\Freyd( \AC )$ whose composition with the cokernel projection 
  \begin{center}
   \begin{tikzpicture}[label/.style={postaction={
      decorate,
      decoration={markings, mark=at position .5 with \node #1;}},
      mylabel/.style={thick, draw=none, align=center, minimum width=0.5cm, minimum height=0.5cm,fill=white}},
      baseline=(base)]
      \coordinate (r) at (2.5,0);
      \coordinate (u) at (0,2);
      \node (A) {$B$};
      \node (B) at ($(A)+(r)$) {$R_B$};
      \node (C) at ($(A) - (u)$) {$B$};
      \node (D) at ($(B) - (u)$) {$R_B \oplus A$};
      \node (base) at ($(A) - 0.5*(u)$) {};
      \draw[->,thick] (B) --node[above]{$\rho_B$} (A);
      \draw[->,thick] (D) --node[above]{$\begin{pmatrix}\rho_B \\ \alpha\end{pmatrix}$} (C);
      \draw[->,thick] (A) --node[left]{$\id_B$} (C);
      \draw[->,thick,dashed] (B) --node[right]{$\begin{pmatrix}\id_{R_B} & 0\end{pmatrix}$} (D);
    \end{tikzpicture}
  \end{center}
  of our monomorphism yields zero, which, in turn, is witnessed by a lift
  \begin{center}
     \begin{tikzpicture}[label/.style={postaction={
      decorate,
      decoration={markings, mark=at position .5 with \node #1;}},
      mylabel/.style={thick, draw=none, align=center, minimum width=0.5cm, minimum height=0.5cm,fill=white}},
      baseline=(base)]
      \coordinate (r) at (4.5,0);
      \coordinate (u) at (0,2.5);
      \node (A) {$T$};
      \node (B) at ($(A)+(r)$) {};
      \node (C) at ($(A) - (u)$) {$B$};
      \node (D) at ($(B) - (u)$) {$R_B \oplus A$.};
      \node (base) at ($(A) - 0.5*(u)$) {};
      
      \draw[->,thick] (D) --node[below]{$\begin{pmatrix}\rho_B \\ \alpha\end{pmatrix}$} (C);
      \draw[->,thick] (A) --node[left]{$\tau$} (C);
      \draw[->,thick,dashed] (A) --node[right,yshift=0.5em]{$\pmatrow{\tau_{R_B}}{\tau_A}$} (D);
    \end{tikzpicture}
  \end{center}
  Then, we can construct the lift along monomorphism as
  \begin{center}
     \begin{tikzpicture}[label/.style={postaction={
      decorate,
      decoration={markings, mark=at position .5 with \node #1;}},
      mylabel/.style={thick, draw=none, align=center, minimum width=0.5cm, minimum height=0.5cm,fill=white}},
      baseline=(base)]
      \coordinate (r) at (2.5,0);
      \coordinate (u) at (0,2);
      \node (A) {$T$};
      \node (B) at ($(A)+(r)$) {$R_T$};
      \node (C) at ($(A) - (u)$) {$A$};
      \node (D) at ($(B) - (u)$) {$R_A$};
      \node (base) at ($(A) - 0.5*(u)$) {};
      \draw[->,thick] (B) --node[above]{$\rho_T$} (A);
      \draw[->,thick] (D) --node[above]{$\rho_A$} (C);
      \draw[->,thick] (A) --node[left]{$\tau_A$} (C);
      \draw[->,thick,dashed] (B) --node[right]{$\bmatrowind{\rho_B}{\rho_{\tau} - \rho_T \cdot \tau_{R_B}}{\rho_T \cdot \tau_A}{\alpha} \cdot \sigma$} (D);
    \end{tikzpicture}
  \end{center}
\end{construction}
\begin{proof}[Correctness of the construction]
 See \cite[Section 3.3]{PosFreyd}.
\end{proof}

How to proceed for colifts along epimorphisms can be seen in \cite[Section 3.4]{PosFreyd}.

\subsubsection{Homomorphisms}\label{subsubsection:homomorphisms}

We end this first section with a discussion of how to compute sets of homomorphisms in Freyd categories,
since this enables us, among other things, to compute sets of natural transformations between finitely presented functors.

Let $\AC$ be an additive category
and let $(A \stackrel{\rho_A}{\longleftarrow} R_A)$ and $(B \stackrel{\rho_B}{\longleftarrow} R_B)$ be objects in $\Freyd( \AC )$.
Recall that a morphism between these two objects
\begin{center}
     \begin{tikzpicture}[label/.style={postaction={
      decorate,
      decoration={markings, mark=at position .5 with \node #1;}},
      mylabel/.style={thick, draw=none, align=center, minimum width=0.5cm, minimum height=0.5cm,fill=white}},
      baseline=(base)]
      \coordinate (r) at (2.5,0);
      \coordinate (u) at (0,2);
      \node (A) {$A$};
      \node (B) at ($(A)+(r)$) {$R_A$};
      \node (C) at ($(A) - (u)$) {$B$};
      \node (D) at ($(B) - (u)$) {$R_B$};
      \node (base) at ($(A) - 0.5*(u)$) {};
      \draw[->,thick] (B) --node[above]{$\rho_A$} (A);
      \draw[->,thick] (D) --node[above]{$\rho_B$} (C);
      \draw[->,thick] (A) --node[left]{$\alpha$} (C);
      \draw[->,thick,dashed] (B) --node[right]{$\rho_{\alpha}$} (D);
      \draw[->,thick,dashed] (A) --node[above]{$\lambda$} (D);
    \end{tikzpicture}
\end{center}
consists of an element $\alpha \in \Hom_{\AC}( A, B )$ considered up to addition with an element of the form
$\lambda \cdot \rho_B$ such that there exists $\rho_{\alpha}$ with $\rho_A \cdot \alpha = \rho_{\alpha} \cdot \rho_B$.
In other words, the abelian group
\[
\mathcal{H} \coloneqq \Hom_{\Freyd( \AC )}\big( (A \stackrel{\rho_A}{\longleftarrow} R_A), (B \stackrel{\rho_B}{\longleftarrow} R_B)\big)
\]
is given by a certain subquotient of the abelian group $\Hom_{\AC}( A, B )$
that fits into the following commutative diagram of abelian groups with exact rows and columns:
\begin{center}
\begin{minipage}{\textwidth}
\begin{center}
  \captionof{figure}{$\mathcal{H}$ as a subquotient of abelian groups.}
  \label{figure:hom_abel_grps}
     \begin{tikzpicture}[label/.style={postaction={
      decorate,
      decoration={markings, mark=at position .5 with \node #1;}},
      mylabel/.style={thick, draw=none, align=center, minimum width=0.5cm, minimum height=0.5cm,fill=white}}]
      \coordinate (r) at (6,0);
      \coordinate (u) at (0,2);
      \node (A) {$0$};
      \node (B) at ($(A)+0.3*(r)$) {$\mathcal{H}$};
      \node (C) at ($(B)+0.5*(r)$) {$\frac{\Hom_{\AC}(A,B)}{\im( \Hom_{\AC}( A, \rho_B ) ) }$};
      \node (D) at ($(C)+(r)$) {$\frac{\Hom_{\AC}(R_A,B)}{\im( \Hom_{\AC}( R_A, \rho_B ) ) }$};
      
      \node (C2) at ($(C)-(u)$) {$0$};
      \node (D2) at ($(D)-(u)$) {$0$};
      
      \node (C3) at ($(C)+(u)$) {$\Hom_{\AC}( A, B )$};
      \node (D3) at ($(D)+(u)$) {$\Hom_{\AC}( R_A, B )$};
      
      \node (C4) at ($(C3)+(u)$) {$\Hom_{\AC}( A, R_B )$};
      \node (D4) at ($(D3)+(u)$) {$\Hom_{\AC}( R_A, R_B )$};

      \draw[->,thick] (A) -- (B);
      \draw[->,thick] (B) -- (C);
      \draw[->,thick] (C) -- (D);
      
      \draw[->,thick] (C) -- (C2);
      \draw[->,thick] (D) -- (D2);
      
      \draw[->,thick] (C3) -- (C);
      \draw[->,thick] (D3) -- (D);
      
      \draw[->,thick] (C4) --node[left]{$\Hom_{\AC}( A, \rho_B )$} (C3);
      \draw[->,thick] (D4) --node[right]{$\Hom_{\AC}( R_A, \rho_B )$} (D3);
      
      \draw[->,thick] (C3) --node[above]{$\Hom_{\AC}( \rho_A, B )$} (D3);
    \end{tikzpicture}
\end{center}
\end{minipage}
\end{center}
Now, assume that $\AC$ has a $\BC$-homomorphism structure $(H, 1, \nu )$, where $\BC$ is an abelian category.
Then, inspired by the diagram of abelian groups above, we may construct a diagram with exact rows and columns in $\BC$:

\begin{center}
\begin{minipage}{\textwidth}
\begin{center}
  \captionof{figure}{Constructing a homomorphism structure for Freyd categories.}
  \label{figure:hom_freyd}
  \begin{tikzpicture}[label/.style={postaction={
      decorate,
      decoration={markings, mark=at position .5 with \node #1;}},
      mylabel/.style={thick, draw=none, align=center, minimum width=0.5cm, minimum height=0.5cm,fill=white}}]
      \coordinate (r) at (6,0);
      \coordinate (u) at (0,2);
      \node (A) {$0$};
      \node (B) at ($(A)+0.3*(r)$) {$\mathcal{H}'$};
      \node (C) at ($(B)+0.5*(r)$) {$\frac{H(A,B)}{\im( H( A, \rho_B ) }$};
      \node (D) at ($(C)+(r)$) {$\frac{H(R_A,B)}{\im( H( R_A, \rho_B ) }$};
      
      \node (C2) at ($(C)-(u)$) {$0$};
      \node (D2) at ($(D)-(u)$) {$0$};
      
      \node (C3) at ($(C)+(u)$) {$H( A, B )$};
      \node (D3) at ($(D)+(u)$) {$H( R_A, B )$};
      
      \node (C4) at ($(C3)+(u)$) {$H( A, R_B )$};
      \node (D4) at ($(D3)+(u)$) {$H( R_A, R_B )$};

      \draw[->,thick] (A) -- (B);
      \draw[->,thick] (B) -- (C);
      \draw[->,thick] (C) -- (D);
      
      \draw[->,thick] (C) -- (C2);
      \draw[->,thick] (D) -- (D2);
      
      \draw[->,thick] (C3) -- (C);
      \draw[->,thick] (D3) -- (D);
      
      \draw[->,thick] (C4) --node[left]{$H( A, \rho_B )$} (C3);
      \draw[->,thick] (D4) --node[right]{$H( R_A, \rho_B )$} (D3);
      
      \draw[->,thick] (C3) --node[above]{$H( \rho_A, B )$} (D3);
    \end{tikzpicture}
\end{center}
\end{minipage}
\end{center}

If $1 \in \BC$ is a projective object, then $\Hom_{\BC}( 1, - )$ is exact. Applying $\Hom_{\BC}( 1, - )$ to the diagram in Figure \ref{figure:hom_freyd} recovers the
diagram of abelian groups depicted in Figure \ref{figure:hom_abel_grps}. But this means
\[
 \Hom_{\BC}( 1, \mathcal{H}' ) \simeq \mathcal{H} \simeq \Hom_{\Freyd( \AC )}\big( (A \stackrel{\rho_A}{\longleftarrow} R_A), (B \stackrel{\rho_B}{\longleftarrow} R_B)\big).
\]
In other words, we used the $\BC$-homomorphism structure on $\AC$ to define a $\BC$-homomorphism structure on $\Freyd( \BC )$
(for more details, see \cite[Section 6.2]{PosFreyd}).

\subsection{Computing natural transformations}

As an application of the abstract algorithms that allow us to compute within Freyd categories,
we show how to compute sets of natural transformations between finitely presented functors.
Within this subsection, $R$ denotes a commutative coherent ring.

\begin{construction}\label{construction:hom_for_fp}
 Recall from Subsection \ref{subsection:Freyd_category} that
 the cascade of category constructors
 \[
 \Freyd( \Freyd( \AsCat(R)^{\oplus} )^{\op} )
\]
defines a category equivalent to finitely presented functors on the category of finitely presented modules over $R$.
We use the findings of the previous subsections to define an $\Freyd( \AsCat(R)^{\oplus} )$-homomorphism structure for this category.
\begin{enumerate}
 \item By Example \ref{example:hom_structure_for_R}, $\AsCat(R)$ has a $\AsCat(R)$-homomorphism structure.
 \item By Construction \ref{Construction:lift_of_hom_structure_to_additive_closure} and Remark \ref{remark:Ab_to_additive_for_hom_structure},
       we can extend this to a $\AsCat(R)^{\oplus}$-homo\-morphism structure for $\AsCat(R)^{\oplus}$.
 \item By applying the natural embedding $\AsCat(R)^{\oplus} \longrightarrow \Freyd( \AsCat(R)^{\oplus} )$, the category 
       $\AsCat(R)^{\oplus}$ has an $\Freyd( \AsCat(R)^{\oplus} )$-homomorphism structure.
 \item Since $R$ is coherent, $\Freyd( \AsCat(R)^{\oplus} )$ is
       abelian and the distinguished object of the homomorphism structure, corresponding to $R$, is projective.
       Thus, by the findings of Subsubsection \ref{subsubsection:homomorphisms}, we obtain an $\Freyd( \AsCat(R)^{\oplus} )$-homomorphism structure for $\Freyd( \AsCat(R)^{\oplus} )$.
 \item If an additive category $\AC$ has a $\BC$-homomorphism structure, then $\AC^{\op}$ has a $\BC$-homomorphism structure as well.
       In particular, $\Freyd( \AsCat(R)^{\oplus} )^{\op}$ has a $\Freyd( \AsCat(R)^{\oplus} )$-homomorphism structure.
 \item Last, we apply the findings of Subsubsection \ref{subsubsection:homomorphisms} again and arrive at the desired $\Freyd( \AsCat(R)^{\oplus} )$-homomorphism structure
       for $\Freyd( \Freyd( \AsCat(R)^{\oplus} )^{\op} )$.
\end{enumerate}
\end{construction}

We demonstrate how the algorithm for the computation of homomorphisms
that results from Construction \ref{construction:hom_for_fp} is carried out concretely.
For simplifying the notation we use the equivalence $\Freyd( \AsCat(R)^{\oplus} ) \simeq R\fpmodl$,
but keep in mind that computing kernels, cokernels, and homomorphisms for $R\fpmodl$
can all be carried out by means of the results in Subsection \ref{subsection:Freyd_category} on Freyd categories.
We start with a simple example.
\begin{example}
 Given the functors $\Hom_{\Z}( \Z/2\Z, - )$ and $\Ext_{\Z}^1( \Z/2\Z, - )$,
 we want to confirm computationally
 \[
  \Hom\big( \Hom_{\Z}( \Z/2\Z, - ), \Ext_{\Z}^1( \Z/2\Z, - ) \big) \simeq \Ext_{\Z}^1( \Z/2\Z, \Z/2\Z ) \simeq \Z/2\Z.
 \]
 The functor $\Hom_{\Z}( \Z/2\Z, - )$ considered as an object in $\Freyd( \Z\fpmodl^{\op} )$ is given by
 \[
  \Z/2\Z \longrightarrow 0.
 \]
 The functor $\Ext_{\Z}^1( \Z/2\Z, - )$ considered as an object in $\Freyd( \Z\fpmodl^{\op} )$ is given by
 \[
  \Z \stackrel{2}{\longrightarrow} \Z,
 \]
 see Example \ref{example:exts}.
 Now, plugging these data into the diagram in Figure \ref{figure:hom_freyd} and
 computing the cokernels, the induced morphism, and the kernel,
 we end up with the diagram
 \begin{center}
    \begin{tikzpicture}[label/.style={postaction={
      decorate,
      decoration={markings, mark=at position .5 with \node #1;}}},
      mylabel/.style={thick, draw=black, align=center, minimum width=0.5cm, minimum height=0.5cm,fill=white}]
      \coordinate (r) at (6,0);
      \coordinate (u) at (0,2);
      \node (A) {$0$};
      \node[mylabel] (B) at ($(A)+0.5*(r)$) {$\Z/2\Z$};
      \node (C) at ($(B)+0.5*(r)$) {$\Z/2\Z$};
      \node (D) at ($(C)+(r)$) {$0$};
      
      \node (C2) at ($(C)-(u)$) {$0$};
      \node (D2) at ($(D)-(u)$) {$0$};
      
      \node (C3) at ($(C)+(u)$) {$\Z/2\Z \simeq H( \Z, \Z/2\Z ) $};
      \node (D3) at ($(D)+(u)$) {$0 \simeq H( \Z, 0 )$};
      
      \node (C4) at ($(C3)+(u)$) {$\Z/2\Z \simeq H( \Z, \Z/2\Z )$};
      \node (D4) at ($(D3)+(u)$) {$0 \simeq H( \Z, 0 )$};

      \draw[shorten >=0.2em,shorten <=0.2em,->,thick] (A) -- (B);
      \draw[shorten >=0.2em,shorten <=0.2em,->,thick] (B) -- (C);
      \draw[->,thick] (C) -- (D);
      
      \draw[->,thick] (C) -- (C2);
      \draw[->,thick] (D) -- (D2);
      
      \draw[->,thick] (C3) -- (C);
      \draw[->,thick] (D3) -- (D);
      
      \draw[->,thick] (C4) --node[left]{$2$} (C3);
      \draw[->,thick] (D4) -- (D3);
      
      \draw[->,thick] (C3) -- (D3);
    \end{tikzpicture}
\end{center}
where we find our desired result inside the box.
\end{example}

Let $M$ be a finitely presented $R$-module.
In order to provide more complicated examples,
we show how to represent the functor $(M \otimes -)$ in $\Freyd( R\fpmodl^{\op} )$, see also
\cite[Lemma 6.1]{A}.
Let
\begin{center}
         \begin{tikzpicture}[label/.style={postaction={
          decorate,
          decoration={markings, mark=at position .5 with \node #1;}},
          mylabel/.style={thick, draw=none, align=center, minimum width=0.5cm, minimum height=0.5cm,fill=white}}]
          \coordinate (r) at (2.5,0);
          \coordinate (u) at (0,2);
          \node (A) {$R^{1 \times b}$};
          \node (B) at ($(A)-(r)$) {$R^{1 \times a}$};
          \node (C) at ($(B)-(r)$) {$M$};
          \node (D) at ($(C)-(r)$) {$0$};
          \draw[->,thick] (A) --node[above]{$\rho_M$} (B);
          \draw[->,thick] (B) -- (C);
          \draw[->,thick] (C) -- (D);
          \end{tikzpicture}
\end{center}
be a presentation of $M$.
The right exactness of the tensor product yields an exact sequence of functors
\begin{center}
         \begin{tikzpicture}[label/.style={postaction={
          decorate,
          decoration={markings, mark=at position .5 with \node #1;}},
          mylabel/.style={thick, draw=none, align=center, minimum width=0.5cm, minimum height=0.5cm,fill=white}}]
          \coordinate (r) at (4,0);
          \coordinate (u) at (0,2);
          \node (A) {$(R^{1 \times b} \otimes -)$};
          \node (B) at ($(A)-1.3*(r)$) {$(R^{1 \times a} \otimes -)$};
          \node (C) at ($(B)-(r)$) {$(M \otimes -)$};
          \node (D) at ($(C)-0.7*(r)$) {$0$};
          \draw[->,thick] (A) --node[above]{$\rho_M \otimes -$} (B);
          \draw[->,thick] (B) -- (C);
          \draw[->,thick] (C) -- (D);
          \end{tikzpicture}
\end{center}
where $\otimes$ is taken over $R$. 
For any free module $R^{1 \times c}$ where $c \in \N_0$, there are isomorphisms
\[
 R^{1 \times c} \otimes N \simeq N^{1 \times c} \simeq \Hom_R(R^{1 \times c}, N)
\]
natural in $N \in R\fpmodl$. Applied to the exact sequence above yields the presentation
\begin{center}
         \begin{tikzpicture}[label/.style={postaction={
          decorate,
          decoration={markings, mark=at position .5 with \node #1;}},
          mylabel/.style={thick, draw=none, align=center, minimum width=0.5cm, minimum height=0.5cm,fill=white}}]
          \coordinate (r) at (4,0);
          \coordinate (u) at (0,2);
          \node (A) {$(R^{1 \times b},-)$.};
          \node (B) at ($(A)-1.3*(r)$) {$(R^{1 \times a},-)$};
          \node (C) at ($(B)-(r)$) {$(M \otimes -)$};
          \node (D) at ($(C)-0.7*(r)$) {$0$};
          \draw[->,thick] (A) --node[above]{$(\rho_M^{\tr},-)$} (B);
          \draw[->,thick] (B) -- (C);
          \draw[->,thick] (C) -- (D);
          \end{tikzpicture}
\end{center}
Thus, $(M \otimes -)$ is given as an object in $\Freyd( R\fpmodl^{\op} )$ by
\begin{center}
         \begin{tikzpicture}[label/.style={postaction={
          decorate,
          decoration={markings, mark=at position .5 with \node #1;}},
          mylabel/.style={thick, draw=none, align=center, minimum width=0.5cm, minimum height=0.5cm,fill=white}}]
          \coordinate (r) at (4,0);
          \coordinate (u) at (0,2);
          \node (A) {$R^{1 \times b}$.};
          \node (B) at ($(A)-1.3*(r)$) {$R^{1 \times a}$};
          \draw[->,thick] (B) --node[above]{$\rho_M^{\tr}$} (A);
          \end{tikzpicture}
\end{center}
\begin{example}
 Let $R \coloneqq \Q[x,y]$ and let 
 \[
 M \coloneqq R^{1 \times 2}/ \langle \pmatrow{x}{y} \rangle. 
 \]
 We wish to compute
 \[
  \Hom\big( (M \otimes_{R} -), \Ext^1(M,-) \big).
 \]
 As seen above, the functor $(M \otimes_{R} -)$ considered as an object in $\Freyd( R\fpmodl^{\op} )$ is given by
 \[
  R^{1 \times 2} \stackrel{\pmatcol{x}{y}}{\longrightarrow} R^{1 \times 1}
 \]
 and the functor $\Ext^1_{R}( M, - )$ considered as an object in $\Freyd( R\fpmodl^{\op} )$ is given by
 \[
  R^{1 \times 1} \stackrel{\pmatrow{x}{y}}{\longrightarrow} R^{1 \times 2}.
 \]
 Again, we use the diagram in Figure \ref{figure:hom_freyd} for our computation
 \begin{center}
     \begin{tikzpicture}[label/.style={postaction={
      decorate,
      decoration={markings, mark=at position .5 with \node #1;}}},
      mylabel/.style={thick, draw=black, align=center, minimum width=0.5cm, minimum height=0.5cm,fill=white}]
      \coordinate (r) at (6.5,0);
      \coordinate (u) at (0,2);
      \node (A) {$0$};
      \node[mylabel] (B) at ($(A)+0.4*(r)$) {$({R}/{\langle x,y \rangle})^{1 \times 2}$};
      \node (C) at ($(B)+0.5*(r)$) {$({R}/{\langle x,y \rangle})^{1 \times 2}$};
      \node (D) at ($(C)+(r)$) {${R}/{\langle x,y \rangle}$};
      
      \node (C2) at ($(C)-(u)$) {$0$};
      \node (D2) at ($(D)-(u)$) {$0$};
      
      \node (C3) at ($(C)+(u)$) {$R^{1 \times 2}$};
      \node (D3) at ($(D)+(u)$) {$R$};
      
      \node (C4) at ($(C3)+(u)$) {$R^{2 \times 2}$};
      \node (D4) at ($(D3)+(u)$) {$R^{2 \times 1}$};

      \draw[shorten >=0.2em,shorten <=0.2em,->,thick] (A) -- (B);
      \draw[shorten >=0.2em,shorten <=0.2em,->,thick] (B) -- (C);
      \draw[->,thick] (C) --node[above]{$0$} (D);
      
      \draw[->,thick] (C) -- (C2);
      \draw[->,thick] (D) -- (D2);
      
      \draw[->,thick] (C3) -- (C);
      \draw[->,thick] (D3) -- (D);
      
      \draw[->,thick] (C4) --node[left]{$(A \mapsto \pmatrow{x}{y}A)$} (C3);
      \draw[->,thick] (D4) --node[right]{$(v \mapsto \pmatrow{x}{y}v)$} (D3);
      
      \draw[->,thick] (C3) --node[above]{$(w \mapsto w\pmatcol{x}{y})$} (D3);
    \end{tikzpicture}
\end{center}
from which we conclude
\[
  \Hom\big( (M \otimes_{R} -), \Ext^1(M,-) \big) \simeq ({R}/{\langle x,y \rangle})^{1 \times 2}.
\]
\end{example}

Last, the functors $\Tor_i(M,-)$ for $i > 0$ are also finitely presented and can thus be represented as objects in $\Freyd( R\fpmodl^{\op} )$, see also \cite[Theorem 10.2.35]{PrestPSL}.
For $\Tor_1(M,-)$, let 
\begin{center}
         \begin{tikzpicture}[label/.style={postaction={
          decorate,
          decoration={markings, mark=at position .5 with \node #1;}},
          mylabel/.style={thick, draw=none, align=center, minimum width=0.5cm, minimum height=0.5cm,fill=white}}]
          \coordinate (r) at (2.5,0);
          \coordinate (u) at (0,2);
          \node (A) {$R^{1 \times b}$};
          \node (A2) at ($(A)+(r)$) {$R^{1 \times c}$};
          \node (B) at ($(A)-(r)$) {$R^{1 \times a}$};
          \node (C) at ($(B)-(r)$) {$M$};
          \node (D) at ($(C)-(r)$) {$0$};
          \draw[->,thick] (A) --node[above]{$\iota$} (B);
          \draw[->,thick] (B) --node[above]{$\epsilon$} (C);
          \draw[->,thick] (C) -- (D);
          \draw[->,thick] (A2) --node[above]{$\rho$} (A);
          \end{tikzpicture}
\end{center}
be an exact sequence, and set
\[
 \Omega^1( M ) \coloneqq \kernel( \epsilon ) \simeq \image( \iota ) \simeq \cokernel( \rho ).
\]
We have an isomorphism
\[
  \Tor_1(M,N) \simeq \kernel\big( \Omega^1(M) \otimes N \rightarrow R^{1\times a} \otimes N \big)
\]
natural in $N \in R\fpmodl$, which means that $\Tor_1(M,-)$ can be computed as the kernel of
\begin{equation}\label{equation:nat_trans}
  (\Omega^1(M) \otimes -) \rightarrow (R^{1\times a} \otimes -).
\end{equation}
Thus, all we need to do is to translate this natural transformation to a morphism in $\Freyd( R\fpmodl^{\op} )$ and take its kernel.
Lifting the embedding $\Omega^1(M) \longrightarrow R^{1 \times a}$ to presentations is simply given
by the following commutative diagram with exact rows:
\begin{center}
         \begin{tikzpicture}[label/.style={postaction={
          decorate,
          decoration={markings, mark=at position .5 with \node #1;}},
          mylabel/.style={thick, draw=none, align=center, minimum width=0.5cm, minimum height=0.5cm,fill=white}}]
          \coordinate (r) at (2.5,0);
          \coordinate (u) at (0,2);
          \node (A) {$R^{1 \times c}$};
          \node (B) at ($(A)-(r)$) {$R^{1 \times b}$};
          \node (C) at ($(B)-(r)$) {$\Omega^1(M)$};
          \node (D) at ($(C)-(r)$) {$0$};
          
          \node (A2) at ($(A) - (u)$) {$0$};
          \node (B2) at ($(A2)-(r)$) {$R^{1 \times a}$};
          \node (C2) at ($(B2)-(r)$) {$R^{1 \times a}$};
          \node (D2) at ($(C2)-(r)$) {$0$};
          \draw[->,thick] (A) --node[above]{$\rho$} (B);
          \draw[->,thick] (B) -- (C);
          \draw[->,thick] (C) -- (D);
          
          \draw[->,thick] (A2) -- (B2);
          \draw[->,thick] (B2) -- (C2);
          \draw[->,thick] (C2) -- (D2);
          
          \draw[->,thick] (A) -- (A2);
          \draw[->,thick] (B) --node[left]{$\iota$} (B2);
          \draw[->,thick] (C) -- (C2);
          \end{tikzpicture}
\end{center}
The transposition of its right square is our desired representation of \eqref{equation:nat_trans} in $\Freyd( R\fpmodl^{\op} )$:
\begin{center}
 \begin{tabular}{ccccccc}
    \begin{tikzpicture}[label/.style={postaction={
          decorate,
          decoration={markings, mark=at position .5 with \node #1;}},
          mylabel/.style={thick, draw=none, align=center, minimum width=0.5cm, minimum height=0.5cm,fill=white}},baseline=(base)]
          \coordinate (r) at (3,0);
          \coordinate (u) at (0,2);
          \node (A) {$(\Omega^1(M) \otimes -)$};
          \node (C) at ($(A) - (u)$) {$(R^{1\times a} \otimes -)$};
          \node (base) at ($(A) - 0.5*(u)$) {};
          \draw[->,thick] (A) -- (C);
      \end{tikzpicture}
  & & & corresponds to & & &
    \begin{tikzpicture}[label/.style={postaction={
          decorate,
          decoration={markings, mark=at position .5 with \node #1;}},
          mylabel/.style={thick, draw=none, align=center, minimum width=0.5cm, minimum height=0.5cm,fill=white}},baseline=(base)]
          \coordinate (r) at (3,0);
          \coordinate (u) at (0,2);
          \node (A) {$R^{1 \times b}$};
          \node (B) at ($(A)+(r)$) {$R^{1 \times c}$};
          \node (C) at ($(A) - (u)$) {$R^{1 \times a}$};
          \node (D) at ($(B) - (u)$) {$0$};
          \node (base) at ($(A) - 0.5*(u)$) {};
          \draw[<-,thick] (B) --node[above]{$\rho^{\tr}$} (A);
          \draw[<-,thick] (D) --node[above]{$0$} (C);
          \draw[<-,thick] (A) --node[left]{$\iota^{\tr}$} (C);
          \draw[<-,thick,dashed] (B) -- (D);
      \end{tikzpicture}
 \end{tabular}
\end{center}
For the construction of its kernel, we apply Construction \ref{construction:kernels_in_Freyd} with $\AC = R\fpmodl^{\op}$.
Since pullbacks in abelian categories are in particular weak pullbacks, and since pullbacks and pushouts are dual concepts,
we end up with
\begin{center}
 \begin{tabular}{ccccccc}
    \begin{tikzpicture}[label/.style={postaction={
          decorate,
          decoration={markings, mark=at position .5 with \node #1;}},
          mylabel/.style={thick, draw=none, align=center, minimum width=0.5cm, minimum height=0.5cm,fill=white}},baseline=(A)]
          \coordinate (r) at (3,0);
          \coordinate (u) at (0,2);
          \node (A) {$(\Omega^1(M) \otimes -)$};
          \node (C) at ($(A) - (u)$) {$(R^{1\times a} \otimes -)$};
          \node (K) at ($(A) + (u)$) {$\Tor_1(M,-)$};
          \node (base) at ($(A) - 0.5*(u)$) {};
          \draw[->,thick] (A) -- (C);
          \draw[->,thick] (K) -- (A);
      \end{tikzpicture}
  & & & corresponds to & & &
    \begin{tikzpicture}[label/.style={postaction={
          decorate,
          decoration={markings, mark=at position .5 with \node #1;}},
          mylabel/.style={thick, draw=none, align=center, minimum width=0.5cm, minimum height=0.5cm,fill=white}},baseline=(A)]
          \coordinate (r) at (4,0);
          \coordinate (u) at (0,2);
          \node (A) {$R^{1 \times b}$};
          \node (B) at ($(A)+(r)$) {$R^{1 \times c}$};
          \node (C) at ($(A) - (u)$) {$R^{1 \times a}$};
          \node (D) at ($(B) - (u)$) {$0$};
          
          \node (E) at ($(A) + (u)$) {$\cokernel( \iota^{\tr} )$};
          \node (F) at ($(B) + (u)$) {$\cokernel( \iota^{\tr} ) \amalg_{R^{1 \times b}} R^{1 \times c}$};
          
          \node (base) at ($(A) - 0.5*(u)$) {};
          \draw[<-,thick] (B) --node[above]{$\rho^{\tr}$} (A);
          \draw[<-,thick] (D) --node[above]{$0$} (C);
          \draw[<-,thick] (A) --node[left]{$\iota^{\tr}$} (C);
          \draw[<-,thick,dashed] (B) -- (D);
          
          \draw[->,thick] (A) -- (E);
          \draw[->,thick,dashed] (B) -- (F);
          \draw[->,thick] (E) -- (F);
      \end{tikzpicture}
 \end{tabular}
\end{center}
where $\cokernel( \iota^{\tr} ) \amalg_{R^{1 \times b}} R^{1 \times c}$ denotes the pushout of the cokernel projection
$R^{1 \times b} \rightarrow \cokernel( \iota^{\tr} )$
and $\rho^{\tr}$.
For higher $\Tor$s, we simply need to replace $\Omega^1(M)$ with a higher syzygy object.

\begin{example}
 We set $R \coloneqq \Q[x,y]$ and again take a look at the module
 \[
 M \coloneqq R^{1 \times 2}/ \langle \pmatrow{x}{y} \rangle. 
 \]
 This time, we wish to compute
 \[
  \Hom\big( \Tor_1(M,-), \Ext^1(M,-) \big).
 \]
 Again, the functor $\Ext^1_{R}( M, - )$ considered as an object in $\Freyd( R\fpmodl^{\op} )$ is given by
 \[
  R^{1 \times 1} \stackrel{\pmatrow{x}{y}}{\longrightarrow} R^{1 \times 2}.
 \]
 Using the description preceding this example, we see that $\Tor_1(M,-)$ considered as an object in $\Freyd( R\fpmodl^{\op} )$ is given by
 \[
  R^{1 \times 1}/\langle x,y \rangle \longrightarrow 0.
 \]
 Again, we use the diagram in Figure \ref{figure:hom_freyd} for our computation
 \begin{center}
     \begin{tikzpicture}[label/.style={postaction={
      decorate,
      decoration={markings, mark=at position .5 with \node #1;}}},
      mylabel/.style={thick, draw=black, align=center, minimum width=0.5cm, minimum height=0.5cm,fill=white}]
      \coordinate (r) at (6.5,0);
      \coordinate (u) at (0,2);
      \node (A) {$0$};
      \node[mylabel] (B) at ($(A)+0.4*(r)$) {${R}/{\langle x,y \rangle}$};
      \node (C) at ($(B)+0.5*(r)$) {${R}/{\langle x,y \rangle}$};
      \node (D) at ($(C)+(r)$) {$0$};
      
      \node (C2) at ($(C)-(u)$) {$0$};
      \node (D2) at ($(D)-(u)$) {$0$};
      
      \node (C3) at ($(C)+(u)$) {$R/\langle x, y \rangle$};
      \node (D3) at ($(D)+(u)$) {$0$};
      
      \node (C4) at ($(C3)+(u)$) {$R/\langle x, y \rangle^{1 \times 2}$};
      \node (D4) at ($(D3)+(u)$) {$0$};

      \draw[shorten >=0.2em,shorten <=0.2em,->,thick] (A) -- (B);
      \draw[shorten >=0.2em,shorten <=0.2em,->,thick] (B) -- (C);
      \draw[->,thick] (C) -- (D);
      
      \draw[->,thick] (C) -- (C2);
      \draw[->,thick] (D) -- (D2);
      
      \draw[->,thick] (C3) --node[left]{$\id$} (C);
      \draw[->,thick] (D3) -- (D);
      
      \draw[->,thick] (C4) --node[left]{$0$} (C3);
      \draw[->,thick] (D4) -- (D3);
      
      \draw[->,thick] (C3) -- (D3);
    \end{tikzpicture}
\end{center}
from which we conclude
\[
  \Hom\big( \Tor_1(M,-), \Ext^1(M,-) \big) \simeq {R}/{\langle x,y \rangle}.
\]
\end{example}

\section{Constructive diagram chases}
Diagram chases are a powerful tool used in homological algebra 
for proving the existence of morphisms situated in some diagram of
prescribed shape.
In this section, we will demonstrate how to perform diagram chases constructively.
The main idea is to employ a calculus that replaces 
the morphisms in an abelian category $\AC$
with a more flexible notion, yielding a new category $\G( \AC )$,
analogous to the replacement of functions in the category of sets
with relations.
This idea has first been pursued in an axiomatic way by Brinkmann and Puppe in \cite{BPKorr} and \cite{PuppeKorr},
and rendered into an explicit calculus by Hilton in \cite{HiltonCorr}.
A calculus of relations in so-called regular categories,
which are more general than abelian categories, was given by Johnstone \cite{JohnstoneElephantI}.

The first algorithmic usage of this calculus in the context of spectral sequence computations is due to Barakat in \cite{barhabil}.
Here, the term \emph{generalized morphism} is coined for morphisms in $\G( \AC )$
and we will follow this convention. Other appropriate terms would be: relations, correspondences, or pseudo morphisms\footnote{Suggested by Jean Michel.}.

The presented material follows closely the presentation of generalized morphisms given in \cite{PosurDoktor},
especially Subsections \ref{subsection:gen_mors} and \ref{subsection:comp_rules}.
\subsection{Additive relations}
We start with the following diagram with exact rows
in the category of abelian groups $\Ab$:
\begin{center}
          \begin{tikzpicture}[transform shape,mylabel/.style={thick, draw=black, align=center, minimum width=0.5cm, minimum height=0.5cm,fill=white}]
            \coordinate (r) at (2.5,0);
            \coordinate (u) at (0,2);
            \node (A) {$A$};
            \node (B) at ($(A) + (r)$) {$B$};
            \node (C) at ($(B) + (r)$) {$C$};
            \node (0u) at ($(C) + (r)$) {$0$};
            \node (ker) at ($(C) + (u)$) {$\kernel( \gamma )$};
            \node (Ap) at ($(A) - (u)$) {$A'$};
            \node (Bp) at ($(Ap) + (r)$) {$B'$};
            \node (Cp) at ($(Bp) + (r)$) {$C'$};
            \node (0d) at ($(Ap) - (r)$) {$0$};
            
            \node (coker) at ($(Ap) - (u)$) {$\cokernel( \alpha )$};
            \draw[->,thick] (A) --node[above]{$\delta$} (B);
            \draw[->,thick] (B) --node[above]{$\epsilon$} (C);
            \draw[->,thick] (C) -- (0u);
            \draw[->,thick] (Ap) --node[above]{$\iota$} (Bp);
            \draw[->,thick] (Bp) --node[above]{$\nu$} (Cp);
            \draw[->,thick] (0d) -- (Ap);
            \draw[->,thick] (ker) --node[right]{$\eta\coloneqq \KernelEmbedding( \gamma )$} (C);
            \draw[->,thick] (Ap) --node[right]{$\zeta\coloneqq \CokernelProjection( \alpha )$} (coker);
            \draw[->,thick] (A) --node[right]{$\alpha$} (Ap);
            \draw[->,thick] (B) --node[right]{$\beta$} (Bp);
            \draw[->,thick] (C) --node[right]{$\gamma$} (Cp);
            
            \node (0A) at ($(A) - (r)$) {$0$};
            \node (0Cp) at ($(Cp) + (r)$) {$0$};
            \draw[->,thick] (0A) -- (A);
            \draw[->,thick] (Cp) -- (0Cp);
          \end{tikzpicture}
\end{center}
The famous snake lemma claims the existence of a morphism
\[
 \partial: \kernel( \gamma ) \longrightarrow \cokernel( \alpha )
\]
fitting into an exact sequence
\begin{center}
          \begin{tikzpicture}[transform shape,mylabel/.style={thick, draw=black, align=center, minimum width=0.5cm, minimum height=0.5cm,fill=white}]
            \coordinate (r) at (2.5,0);
            \coordinate (u) at (0,0.5);
            \node (A) {};
            \node (B) at ($(A) + (r)$) {};
            \node (C) at ($(B) + (r)$) {};
            \node (0u) at ($(C) + (r)$) {};
            \node (ker) at ($(C) + (u)$) {$\kernel( \gamma )$};
            \node (kerB) at ($(B) + (u)$) {$\kernel( \beta )$};
            \node (kerA) at ($(A) + (u)$) {$\kernel( \alpha )$};
            \node (Ap) at ($(A) - (u)$) {};
            \node (Bp) at ($(Ap) + (r)$) {};
            \node (Cp) at ($(Bp) + (r)$) {};
            \node (0d) at ($(Ap) - (r)$) {};
            
            \node (coker) at ($(Ap) - (u)$) {$\cokernel( \alpha )$};
            \node (cokerB) at ($(Bp) - (u)$) {$\cokernel( \beta )$};
            \node (cokerC) at ($(Cp) - (u)$) {$\cokernel( \gamma )$};
            
            \node (0A2) at ($(kerA) - (r)$) {$0$};
            \node (0Cp2) at ($(cokerC) + (r)$) {$0$};
            
            \draw[->,thick] (0A2) -- (kerA);
            \draw[->,thick] (kerA) -- (kerB);
            \draw[->,thick] (kerB) -- (ker);
            
            \draw[->,thick] (coker) -- (cokerB);
            \draw[->,thick] (cokerB) -- (cokerC);
            \draw[->,thick] (cokerC) -- (0Cp2);

            \draw (ker) [->,thick,out=-1,in=180-1,looseness=2] to node[mylabel]{$\partial$} (coker);
          \end{tikzpicture}
\end{center}
We will focus on the existence part of this lemma.
A description of $\partial$ can be given on the level of elements:
\begin{enumerate}
 \item Start with an element $c \in \kernel( \gamma )$.
 \item Regard it as an element $c \in C$.
 \item \textbf{Choose} an element $b \in \epsilon^{-1}(\{ c \})$.
 \item Map $b$ via $\beta$ and obtain $b' \coloneqq \beta( b ) \in B'$.
 \item Find the uniquely determined element $a' \in \iota^{-1}(\{b'\})$.
 \item Consider the residue class of $a'$ in $\cokernel( \alpha )$.
\end{enumerate}
It is quite easy to prove that each of these steps can actually be carried out
and that the resulting map
\[
 \kernel( \gamma ) \rightarrow \cokernel( \alpha ): c \mapsto a' + \image( \alpha )
\]
is a group homomorphism independent of the choice made in step $(3)$.

A common approach to prove the existence of $\partial$ not only in the category
of abelian groups but in every abelian category is to use embedding theorems \cite{Freyd}.
Such theorems reduce constructions in a small abelian category to the case of categories of modules
where one can happily perform element-wise constructions like the one we did above.

We are going to follow a more computer-friendly approach that will enable us to construct
$\partial$ only using operations within our given abelian category and without
passing to an ambient module category.
To see how this goal can be achieved, let us take a look at the most crucial
step within the construction of $\partial$ in the category of abelian groups above,
namely step $(3)$. It is highly uncanonical to choose just any preimage of $c$,
and in fact, every choice is just as good as every other choice.
A possible way to overcome this problem is by not making any choice at all, but to work
with the whole preimage $\epsilon^{-1}(\{ c \})$ instead.
Following this idea, the steps in the construction of $\partial$ above can be reformulated as follows:
\begin{enumerate}
 \item Start with an element $c \in \kernel( \gamma )$.
 \item Regard it as an element $c \in C$.
 \item Construct the whole preimage $b \coloneqq \epsilon^{-1}(\{ c \}) \subseteq B$.
 \item Construct the image $b' \coloneqq \beta( b ) \subseteq B'$.
 \item Construct the whole preimage $a' \coloneqq \iota^{-1}(\{b'\}) \subseteq A'$.
 \item Construct the image of $a'$ under the cokernel projection: $\{ x + \image( \alpha ) \mid x \in a' \}$.
       It will consist of a single element.
\end{enumerate}
We got rid of the uncanonical step in this set of instructions
and all we do is to take images and fibers of \emph{sets of elements} instead of single elements.
One possible way to formulate these new instructions in a more categorical way is given by replacing
the notion of a group homomorphism by the notion of an additive relation.

\begin{definition}
 An \textbf{additive relation} from an abelian group $A$ to an abelian group $B$
 is given by a subgroup $f \subseteq A \times B$.
\end{definition}

\begin{example}
 Every abelian group homomorphism $\alpha: A \rightarrow B$ in $\Ab$
 defines via its graph an additive relation
 \[
  [\alpha]\coloneqq \{ ( a, b ) \mid \alpha( a ) = b \} \subseteq A \times B.
 \]
\end{example}

\begin{example}
 If $f \subseteq A \times B$ is an additive relation, then so is
 its \textbf{pseudo-inverse}
 \[
  f^{-1} \coloneqq \{ ( b, a ) \mid (a,b) \in f \} \subseteq B \times A.
 \]
\end{example}

Additive relations $f \subseteq A \times B$ and $g \subseteq B \times C$
can be composed via
\[
 f \cdot g \coloneqq \{ ( a, c ) \mid \exists b \in B: (a,b) \in f, (b,c) \in g \} \subseteq A \times C.
\]
This composition turns abelian groups and additive relations into a category $\Rel( \Ab )$
with graphs of the identity group homomorphisms as its identities.
Mapping a group homomorphism to its graph lets us think of 
$\Ab$ as a non-full subcategory of $\Rel( \Ab )$.

Our reformulated set of instructions for computing $\partial$
can now conveniently be written as a simple composition of relations:
\[
 [ \partial ] = [ \eta ] \cdot [ \epsilon ]^{-1} \cdot [ \beta ] \cdot [ \iota ]^{-1} \cdot [ \zeta ].
\]
To sum it up, it can be said that performing constructions in $\Ab$ via diagram chases
boils down to calculations in $\Rel( \Ab )$.
Thus, it is our goal to find a calculus for working with relations 
in an arbitrary abelian category $\AC$.

\subsection{Category of generalized morphisms}\label{subsection:gen_mors}
From now on, we denote by $\AC$ an arbitrary abelian category.
Given two objects $A,B \in \AC$, a \textbf{span} $S$ (from $A$ to $B$)
is simply given by an object $C \in \AC$ together with
a pair of morphisms $(A \stackrel{\alpha}{\longleftarrow} C, C \stackrel{\beta}{\longrightarrow} B)$.
We depict a span as
\begin{center}
          \begin{tikzpicture}[transform shape,mylabel/.style={thick, draw=black, align=center, minimum width=0.5cm, minimum height=0.5cm,fill=white}]
            \coordinate (r) at (1.5,0);
            \coordinate (u) at (0,1.5);
            
            \node (A) {$A$};
            \node (B) at ($(A)+2*(r)$) {$B$};
            \node (C) at ($(A)+(r) -(u)$) {$C$};
            \draw[->,thick, dashed] (A) --node[mylabel]{$S$} (B);
            \draw[->,thick] (C) -- node[right]{$\beta$} (B);
            \draw[->,thick] (C) -- node[left]{$\alpha$} (A);
          \end{tikzpicture}
        \end{center}
  or as 
  \begin{center}
          \begin{tikzpicture}[transform shape,mylabel/.style={thick, draw=black, align=center, minimum width=0.5cm, minimum height=0.5cm,fill=white}]
            \coordinate (r) at (1.5,0);
            \coordinate (u) at (0,1.5);
            \node (A) {$A$};
            \node (B) at ($(A)+2*(r)$) {$B$.};
            \node (C) at ($(A)+(r)$) {$C$};
            \draw[->,thick] (C) -- node[above]{$\beta$} (B);
            \draw[->,thick] (C) -- node[above]{$\alpha$} (A);
          \end{tikzpicture}
  \end{center}
Note that we included a direction within our definition of a span in the sense that
swapping the order of the pair of morphisms defines a different span
(from $B$ to $A$).

\begin{definition}\label{definition:category_of_spans}
 The \bfindex{category of spans of $\AC$}, denoted by $\Span( \AC )$, is defined by the following data:
 \begin{enumerate}
  \item Objects are given by $\Obj_{\AC}$.
  \item Morphisms from $A$ to $B$ are spans from $A$ to $B$.
  \item Two spans 
        $(A \stackrel{\alpha}{\longleftarrow} C \stackrel{\beta}{\longrightarrow} B)$ 
        and $(A \stackrel{\alpha'}{\longleftarrow} C'  \stackrel{\beta'}{\longrightarrow}  B)$
        are considered to be \textbf{equal as spans} if 
        there exists an isomorphism $\iota: C \longrightarrow C'$ compatible with the spans,
        i.e., such that
        $
         \alpha = \iota \cdot \alpha'
        $
        and
        $
        \beta = \iota \cdot \beta'.
        $
  \item The identity of $A$ is given by $(A \stackrel{\id}{\longleftarrow} A \stackrel{\id}{\longrightarrow} A)$, 
  where $\id$ denotes the identity of $A$ regarded as an object in $\AC$.
  \item Composition of $(A \stackrel{\alpha}{\longleftarrow} D \stackrel{\beta}{\longrightarrow} B)$ 
        and $(B \stackrel{\gamma}{\longleftarrow} E  \stackrel{\delta}{\longrightarrow}  C)$
        is given by the outer span in the following diagram:
        \begin{center}
          \begin{tikzpicture}[label/.style={postaction={
            decorate,
            decoration={markings, mark=at position .5 with \node #1;}}}]
            \coordinate (r) at (1.5,0);
            \coordinate (u) at (0,1.5);
            
            \node (A) {$A$};
            \node (B) at ($(A)+2*(r)$) {$B$};
            \node (C) at ($(A)+4*(r)$) {$C$};
            \node (D) at ($(A) + (r) - (u)$) {$D$};
            \node (E) at ($(D)+2*(r)$) {$E$};
            \node (F) at ($(B)-2*(u)$) {$D \times_{B} E$};
            \draw[->,thick, dashed] (A) -- (B);
            \draw[->,thick, dashed] (B) -- (C);
            \draw[->,thick] (D) -- node[left]{$\alpha$} (A);
            \draw[->,thick] (D) -- node[right]{$\beta$} (B);
            \draw[->,thick] (E) -- node[left,inner sep = 0.5em]{$\gamma$} (B);
            \draw[->,thick] (E) -- node[right]{$\delta$} (C);
            \draw[->,thick] (F) -- node[left,inner sep = 0.5em]{$\gamma^{\ast}$} (D);
            \draw[->,thick] (F) -- node[right]{$\beta^{\ast}$} (E);
          \end{tikzpicture}
        \end{center}
 \end{enumerate}
\end{definition}

We have to check compatibility of composition and identities with our
notion of equality for spans.

\begin{lemma}\mbox{}
 \begin{enumerate}
  \item The identity in $\Span( \AC )$ acts like a unit up to equality of spans.
  \item Composition of morphisms in $\Span( \AC )$ is associative up to equality of spans.
 \end{enumerate}
\end{lemma}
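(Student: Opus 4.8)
The plan is to reduce both claims to the universal property of pullbacks, since every composite span in $\Span( \AC )$ is constructed from a pullback. In each case I will exhibit the isomorphism $\iota$ between the two apex objects required by the definition of equality of spans and verify that it intertwines the two legs.

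First I would treat the unit law. Composing the identity span $(A \stackrel{\id}{\longleftarrow} A \stackrel{\id}{\longrightarrow} A)$ on the left with an arbitrary span $(A \stackrel{\alpha}{\longleftarrow} D \stackrel{\beta}{\longrightarrow} B)$ amounts to forming the pullback of the cospan $A \stackrel{\id}{\longrightarrow} A \stackrel{\alpha}{\longleftarrow} D$. Since one leg of this cospan is an identity, the pullback object is canonically isomorphic to $D$, with the projection onto the first factor equal to $\alpha$ and the projection onto the second factor equal to $\id_D$. Tracing the two outer legs of the composite span through this identification returns exactly $\alpha$ and $\beta$, so the composite equals the original span up to equality of spans. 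The right unit law is handled symmetrically, using the pullback of $D \stackrel{\beta}{\longrightarrow} B \stackrel{\id}{\longleftarrow} B$.

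For associativity, fix composable spans $S_i = (A_{i-1} \stackrel{\alpha_i}{\longleftarrow} D_i \stackrel{\beta_i}{\longrightarrow} A_i)$ for $i = 1,2,3$. The apex of $(S_1 \cdot S_2) \cdot S_3$ is $(D_1 \times_{A_1} D_2) \times_{A_2} D_3$, whereas the apex of $S_1 \cdot (S_2 \cdot S_3)$ is $D_1 \times_{A_1} (D_2 \times_{A_2} D_3)$. The key step is to show that both objects are limits of the single zig-zag diagram
\[
 D_1 \stackrel{\beta_1}{\longrightarrow} A_1 \stackrel{\alpha_2}{\longleftarrow} D_2 \stackrel{\beta_2}{\longrightarrow} A_2 \stackrel{\alpha_3}{\longleftarrow} D_3.
\]
I would argue this by decomposing each iterated double pullback into two adjacent pullback squares and applying the pasting law for pullbacks in both directions, thereby identifying each apex as the universal object carrying a cone over the whole zig-zag. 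By uniqueness of limits, there is then a unique isomorphism $\iota$ between the two apexes commuting with all projections, in particular with the outermost ones onto $D_1$ and $D_3$. Since the two outer legs of each composite are obtained by composing these projections with $\alpha_1$ (giving the leg to $A_0$) and $\beta_3$ (giving the leg to $A_3$), the isomorphism $\iota$ intertwines them, which is precisely the compatibility $\alpha = \iota \cdot \alpha'$ and $\beta = \iota \cdot \beta'$ demanded by equality of spans.

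I expect the bookkeeping in the associativity argument to be the main obstacle. One must decompose each triple composite into its two constituent pullback squares, check that the leg of the intermediate span $S_1 \cdot S_2$ (respectively $S_2 \cdot S_3$) through which the next pullback is taken is exactly the arrow into $A_2$ (respectively into $A_1$) appearing in the zig-zag, and only then invoke the pasting law. Matching each projection to the correct outer leg under the precomposition convention used throughout also requires care, but once the limit characterization is established the verification of the compatibility conditions for $\iota$ is immediate.
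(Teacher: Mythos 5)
Your proposal is correct and takes essentially the same route as the paper: the unit law via the observation that pulling back along an identity reproduces the given span, and associativity via the pasting (transitivity) law for pullbacks applied to a common threefold object --- your zig-zag limit is precisely the paper's $(E \times_B F) \times_F (F \times_C G)$. The only cosmetic difference is that you package the identification through uniqueness of limits of the zig-zag, whereas the paper exhibits the two pasted pullback rectangles explicitly.
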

\begin{proof}
 For the first assertion, let $(A \stackrel{\alpha}{\longleftarrow} D \stackrel{\beta}{\longrightarrow} B)$
 be a span. Composition with the identity $(B \stackrel{\id}{\longleftarrow} B \stackrel{\id}{\longrightarrow} B)$ from
 the right yields the diagram
 \begin{center}
          \begin{tikzpicture}[label/.style={postaction={
            decorate,
            decoration={markings, mark=at position .5 with \node #1;}}}]
            \coordinate (r) at (1.5,0);
            \coordinate (u) at (0,1.5);
            
            \node (A) {$A$};
            \node (B) at ($(A)+2*(r)$) {$B$};
            \node (C) at ($(A)+4*(r)$) {$B$};
            \node (D) at ($(A) + (r) - (u)$) {$D$};
            \node (E) at ($(D)+2*(r)$) {$B$};
            \node (F) at ($(B)-2*(u)$) {$D$};
            \draw[->,thick, dashed] (A) -- (B);
            \draw[->,thick, dashed] (B) -- (C);
            \draw[->,thick] (D) -- node[left]{$\alpha$} (A);
            \draw[->,thick] (D) -- node[right]{$\beta$} (B);
            \draw[->,thick] (E) -- node[left,inner sep = 0.5em]{$\id$} (B);
            \draw[->,thick] (E) -- node[right]{$\id$} (C);
            \draw[->,thick] (F) -- node[left,inner sep = 0.5em]{$\id$} (D);
            \draw[->,thick] (F) -- node[right]{$\beta$} (E);
          \end{tikzpicture}
        \end{center}
 This proves that the identity is a right unit. An analogous argument shows that it is also a left unit.
 For the second assertion, consider the following diagram of consecutive pullbacks:
 \begin{center}
          \begin{tikzpicture}
          [transform shape,mylabel/.style={thick, draw=black, align=center, minimum width=0.5cm, minimum height=0.5cm,fill=white}]
            \coordinate (r) at (1.5,0);
            \coordinate (u) at (0,1.5);
            
            \node (A) {$A$};
            \node (B) at ($(A)+2*(r)$) {$B$};
            \node (C) at ($(A)+4*(r)$) {$C$};
            \node (D) at ($(A)+6*(r)$) {$D$};
            \node (E) at ($(A)+(r)-(u)$) {$E$};
            \node (F) at ($(E) + 2*(r)$) {$F$};
            \node (G) at ($(F)+ 2*(r)$) {$G$};
            \node (H) at ($(B) - 2*(u)$) {$E \times_{B} F$};
            \node (I) at ($(H) + 2*(r)$) {$F \times_{C} G$};
            \node (J) at ($(F) -2*(u)$) {$(E \times_{B} F) \times_{F} (F \times_{C} G)$};
            \draw[->,thick, dashed] (A) --node[mylabel]{$S$} (B);
            \draw[->,thick, dashed] (B) --node[mylabel]{$T$} (C);
            \draw[->,thick, dashed] (C) --node[mylabel]{$U$} (D);
            \draw[->,thick] (E) --  (A);
            \draw[->,thick] (E) --  (B);
            \draw[->,thick] (F) --  (B);
            \draw[->,thick] (F) -- (C);
            \draw[->,thick] (G) --  (C);
            \draw[->,thick] (G) -- (D);
            \draw[->,thick] (H) -- (E);
            \draw[->,thick] (H) -- (F);
            \draw[->,thick] (I) -- (F);
            \draw[->,thick] (I) -- (G);
            \draw[->,thick] (J) -- (H);
            \draw[->,thick] (J) -- (I);
          \end{tikzpicture}
        \end{center}
  By transitivity of pullbacks, the rectangles with vertices $E,B,F \times_{C} G,(E \times_{B} F) \times_{F} (F \times_{C} G)$
  and $C,G,E \times_{B} F,(E \times_{B} F) \times_{F} (F \times_{C} G)$
  are also pullback squares. But this means that the outer span of the above diagram
  is isomorphic to both $S \cdot (T \cdot U)$ and $(S \cdot T) \cdot U$.
\end{proof}

\begin{definition}
 Given a span $(A \stackrel{\alpha}{\longleftarrow} C \stackrel{\beta}{\longrightarrow} B)$, we define its
 \bfindex{associated relation} as the image of the morphism
 \[
  (\alpha, \beta): C \longrightarrow A \oplus B.
 \]
 In particular, the associated relation of a span is a subobject of $A \oplus B$.
\end{definition}

\begin{definition}
 We say two spans from $A$ to $B$ are \bfindex{stably equivalent}
 if their associated relations are equal as subobjects of $A \oplus B$.
\end{definition}

\begin{remark}
 Being stably equivalent is coarser than being equal as spans.
\end{remark}

\begin{lemma}\label{lemma:epimorphism_stably_equivalent}
 Let $\epsilon: D \twoheadrightarrow C$ be an epimorphism in $\AC$.
 Every span of the form 
 \[
 (A \stackrel{\alpha}{\longleftarrow} C \stackrel{\beta}{\longrightarrow} B)
 \]
 is stably equivalent to the outer span in the diagram given by composition with $\epsilon$:
 \begin{center}
          \begin{tikzpicture}
            [transform shape,mylabel/.style={thick, draw=black, align=center, minimum width=0.5cm, minimum height=0.5cm,fill=white}]
            
            \coordinate (r) at (2,0);
            \coordinate (u) at (0,1);
            
            \node (A) {$A$};
            \node (B) at ($(A)+2*(r)$) {$B$};
            \node (C) at ($(A)-(u) + (r)$) {$C$};
            \node (D) at ($(C) - 2*(u)$) {$D$};
            \draw[->,thick, dashed] (A) -- (B);
            \draw[->,thick] (C) --node[below]{$\alpha$} (A);
            \draw[->,thick] (C) --node[below]{$\beta$} (B);
            \draw[->>,thick] (D) --node[mylabel]{$\epsilon$} (C);
            \draw (D) [->,thick, out = 150, in = 290] to (A);
            \draw (D) [->,thick, out = 30, in = 250] to (B);
          \end{tikzpicture}
        \end{center}
\end{lemma}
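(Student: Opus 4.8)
The plan is to reduce the claim to the behavior of images under precomposition with an epimorphism, since stable equivalence is defined via associated relations, which are images. First I would identify the outer span explicitly: its two legs are the composites $\epsilon \cdot \alpha \colon D \to A$ and $\epsilon \cdot \beta \colon D \to B$, so the morphism classifying it into the biproduct is $(\epsilon \cdot \alpha, \epsilon \cdot \beta) = \epsilon \cdot (\alpha, \beta) \colon D \to A \oplus B$, using that postcomposing the pairing $(\alpha,\beta)$ by $\epsilon$ from the source side distributes over both components. By definition, the associated relation of the original span is $\image\big( (\alpha,\beta) \big)$ and that of the outer span is $\image\big( \epsilon \cdot (\alpha,\beta) \big)$, both viewed as subobjects of $A \oplus B$. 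Hence it suffices to prove that these two subobjects coincide.

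The key step is the general fact that in an abelian category, precomposing a morphism with an epimorphism leaves its image unchanged. To establish this I would take the epi--mono factorization $(\alpha,\beta) = e \cdot m$, where $e \colon C \twoheadrightarrow I$ is an epimorphism and $m \colon I \hookrightarrow A \oplus B$ is the monomorphism representing $\image\big( (\alpha,\beta) \big)$. Then
\[
 \epsilon \cdot (\alpha,\beta) = \epsilon \cdot (e \cdot m) = (\epsilon \cdot e) \cdot m,
\]
and since a composite of epimorphisms is again an epimorphism, $\epsilon \cdot e \colon D \twoheadrightarrow I$ is epi. Thus $(\epsilon \cdot e) \cdot m$ is an epi--mono factorization of $\epsilon \cdot (\alpha,\beta)$ carrying the very same monomorphism $m$ as its mono part. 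By the uniqueness of epi--mono factorizations in an abelian category, $m$ therefore also represents $\image\big( \epsilon \cdot (\alpha,\beta) \big)$, so the two images agree as subobjects of $A \oplus B$. This is exactly the asserted stable equivalence.

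I do not anticipate a genuine obstacle; the argument is entirely formal once the epi--mono factorization is in place. The only point requiring care is to carry out the comparison on the level of \emph{subobjects} rather than merely up to isomorphism: uniqueness of the factorization delivers the identity of the representing monomorphisms, which is what the definition of stable equivalence demands.
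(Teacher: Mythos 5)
Your proposal is correct and follows essentially the same route as the paper: both identify the outer span's classifying morphism as $\epsilon \cdot (\alpha,\beta)$ and conclude by the fact that precomposition with an epimorphism does not change the image. The only difference is that the paper cites this fact as standard, whereas you supply its proof via the uniqueness of epi--mono factorizations, which is a sound and welcome elaboration.
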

\begin{proof}
 We have $( \epsilon \cdot \alpha,  \epsilon \cdot \beta ) = \epsilon \cdot (\alpha, \beta)$, and in an
 abelian category, the image is not affected by epimorphisms.
 Thus, $\im\left( ( \epsilon \cdot \alpha,  \epsilon \cdot \beta ) \right) = \im\left( (\alpha, \beta) \right)$.
\end{proof}

\begin{theorem}\label{theorem:stably_equivalence}
 Being stably equivalent defines a congruence on $\Span( \AC )$.
\end{theorem}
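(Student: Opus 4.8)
The plan is to check the two defining properties of a congruence on $\Span(\AC)$: that stable equivalence is an equivalence relation on each hom-set, and that it is compatible with composition. The first property is immediate, since equality of subobjects of $A \oplus B$ is reflexive, symmetric and transitive; moreover the Remark preceding the statement records that stable equivalence is coarser than equality of spans, so it genuinely descends to a relation on the morphisms of $\Span(\AC)$. The entire content therefore lies in compatibility with composition: for stably equivalent spans $S \sim S'$ from $A$ to $B$ and $T \sim T'$ from $B$ to $C$, I must prove $S \cdot T \sim S' \cdot T'$.

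First I would pass to canonical representatives. Writing the epi--mono factorization of $(\alpha,\beta)\colon C \to A \oplus B$ as $C \stackrel{\epsilon}{\twoheadrightarrow} R \hookrightarrow A \oplus B$, the subobject $R$ is by definition the associated relation of $S$. The \emph{relation span} $R_S \coloneqq (A \leftarrow R \rightarrow B)$, obtained by postcomposing the inclusion $R \hookrightarrow A \oplus B$ with the two projections, has associated relation $R$ as well, because the comparison $R \to A \oplus B$ is then just the inclusion and hence its own image. Thus $S$ and $R_S$ share the same associated relation, and two spans are stably equivalent precisely when they determine the \emph{same} relation span. It therefore suffices to compare an arbitrary span with its relation span after composing with a fixed span.

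The key step, which I would apply once on each side, is a base-change argument. Keep $S = (A \leftarrow C \rightarrow B)$ with its right leg $\beta = \epsilon \cdot r_B$ factoring through the epimorphism $\epsilon\colon C \twoheadrightarrow R$, and let $T = (B \stackrel{\gamma}{\leftarrow} E \rightarrow D)$. The composite $S \cdot T$ has apex the pullback $P = C \times_B E$, while $R_S \cdot T$ has apex $Q = R \times_B E$. Since $\beta$ factors through $r_B$, the pasting law for pullbacks gives $P \cong C \times_R Q$, so the comparison map $P \to Q$ is the pullback of $\epsilon$ along $Q \to R$. Because epimorphisms are stable under base change in an abelian category, $P \to Q$ is again an epimorphism, and one checks that it is compatible with the two legs into $A$ and $D$. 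Lemma \ref{lemma:epimorphism_stably_equivalent} then yields $S \cdot T \sim R_S \cdot T$; the symmetric argument on the other leg gives $S \cdot T \sim S \cdot R_T$ whenever $R_T$ is the relation span of $T$.

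Combining these finishes the proof: if $S \sim S'$ and $T \sim T'$, then their relation spans coincide, $R_S = R_{S'}$ and $R_T = R_{T'}$, and the base-change comparisons chain together as
\[
 S \cdot T \sim R_S \cdot T = R_{S'} \cdot T \sim S' \cdot T \sim S' \cdot R_T = S' \cdot R_{T'} \sim S' \cdot T'.
\]
I expect the main obstacle to be the bookkeeping in the key step: verifying via the pasting law that the induced map on pullback apexes really is an epimorphism \emph{and} compatible with both legs, so that Lemma \ref{lemma:epimorphism_stably_equivalent} applies. Once epimorphisms are known to be stable under pullback, everything else is formal.
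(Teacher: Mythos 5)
Your proposal is correct and follows essentially the same route as the paper: reduce each span to its relation span via epi--mono factorization, observe using the pasting law that the comparison map between the composite apexes is a base change of the factoring epimorphism (hence an epimorphism, since epimorphisms are pullback-stable in an abelian category), and conclude by Lemma \ref{lemma:epimorphism_stably_equivalent}, treating the other factor by symmetry. The only cosmetic difference is that you replace the left factor $S$ first while the paper replaces the right factor $T$ first.
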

\begin{proof}
 Let $S = (A \longleftarrow D \longrightarrow B)$ be a span
 and let $(\zeta, \eta): I \hookrightarrow B \oplus C$ be a monomorphism.
 Let $T = (B \longleftarrow E \longrightarrow C)$ be a span obtained by composing 
 $\zeta, \eta$ with an epimorphism $\epsilon: E \twoheadrightarrow I$.
 By transitivity of the pullback, we get $S \cdot T$ as the outer span in the following diagram:
 \begin{center}
          \begin{tikzpicture}
            [transform shape,mylabel/.style={thick, draw=black, align=center, minimum width=0.5cm, minimum height=0.5cm,fill=white}]
            
            \coordinate (r) at (1.5,0);
            \coordinate (u) at (0,1.2);
            
            \node (A) {$A$};
            \node (B) at ($(A)+2*(r)$) {$B$};
            \node (C) at ($(A)+ 4*(r)$) {$C$};
            \node (D) at ($(A) + (r) - (u)$) {$D$};
            \node (I) at ($(D) + 2*(r)$) {$I$};
            \node (F) at ($(D) + (r) - (u)$) {$D \times_{B} I$};
            \node (P) at ($(F) - 2*(u)$) {$(D \times_{B} I) \times_{I} E$};
            \node (E) at ($(I) - 2*(u)$) {$E$};
            \draw[->,thick, dashed] (A) --node[mylabel]{$S$} (B);
            \draw[->,thick, dashed] (B) -- (C);
            \draw[->,thick] (D) -- (A);
            \draw[->,thick] (D) -- (B);
            \draw[->,thick] (I) --node[below]{$\zeta$} (B);
            \draw[->,thick] (I) --node[below]{$\eta$} (C);
            \draw[->,thick] (F) -- (D);
            \draw[->>,thick] (E) --node[mylabel]{$\epsilon$} (I);
            \draw[->,thick] (F) -- (I);
            \draw[->>,thick] (P) --node[mylabel]{$\epsilon^{\ast}$} (F);
            \draw[->,thick] (P) -- (E);
            \draw (E) [->,thick, out = 30, in = 270] to (C);
            \draw (P) [->,thick, out = 150, in = 270] to (A);
          \end{tikzpicture}
        \end{center}
  In an abelian category the pullback of an epimorphism yields an epimorphism. Thus, $\epsilon^{\ast}$ is an epimorphism.
  Now, we apply Lemma \ref{lemma:epimorphism_stably_equivalent} to see that the
  stable equivalence class of $S \cdot T$ only depends on $(\zeta, \eta)$, which is the associated relation of $T$.
  Thus, if $T$ and $T'$ have the same associated relation, i.e., are stably equivalent, then so are $S \cdot T$ and $S \cdot T'$.
  By the symmetry of the situation, a similar statement holds for stably equivalent $S$, $S'$ and compositions $S \cdot T$, $S' \cdot T$.
  This shows the claim.
\end{proof}

Due to Theorem \ref{theorem:stably_equivalence}, we can now define the generalized morphism category.

\begin{definition}\label{definition:generalized_morphism_category}
 Let $\AC$ be an abelian category. The quotient category of $\Span( \AC )$
 modulo stable equivalences is called the \bfindex{generalized morphism category of $\AC$},
 and denoted by $\G( \AC )$.
 Concretely, it consists of the following data:
 \begin{enumerate}
  \item Objects are given by $\Obj_{\AC}$.
  \item Morphisms from $A$ to $B$ are spans from $A$ to $B$.
  \item Two spans are considered to be \textbf{equal as generalized morphisms} if and only if they are stably equivalent.
  \item Identity and composition are given as in Definition \ref{definition:category_of_spans}.
 \end{enumerate}
 We call a span from $A$ to $B$ a \bfindex{generalized morphism} when we regard it as a morphism in $\G( \AC )$.
\end{definition}

\subsection{Computation rules}\label{subsection:comp_rules}

We will see that computing within $\G( \AC )$ boils down to
computing compositions of morphisms and pseudo-inverses of morphisms in $\AC$.
Every morphism $\alpha: A \rightarrow B$ in $\AC$ gives rise to a morphism 
\begin{center}
 \begin{tikzpicture}[label/.style={postaction={
            decorate,
            decoration={markings, mark=at position .5 with \node #1;}}},
            mylabel/.style={thick, draw=black, align=center, minimum width=0.5cm, minimum height=0.5cm,fill=white}]
            \coordinate (r) at (2,0);
            \coordinate (u) at (0,1.3);
            
            \node (A) {$A$};
            \node (B) at ($(A)+2*(r)$) {$B$};
            \node (C) at ($(A)+(r) -(u)$) {$A$};
            \draw[->,thick, dashed] (A) --node[mylabel]{$[\alpha]$} (B);
            \draw[->,thick] (C) -- node[below]{$\alpha$} (B);
            \draw[->,thick] (C) -- node[below]{$\id_A$}(A);
 \end{tikzpicture}
\end{center}
in $\G( \AC )$.
Since the pullback of the identity can again be chosen as the identity, we actually have a functor
\[
 [-]: \AC \longrightarrow \G( \AC ).
\]
Moreover, assume that we have
$[\alpha] = [\alpha']$ for a given pair
$\alpha, \alpha': A \rightarrow B$.
Since the morphisms $(1, \alpha): A \longrightarrow A \oplus B$
and $(1, \alpha'): A \longrightarrow A \oplus B$ are monos,
it follows that $\alpha = \alpha'$. Thus, our functor $[-]$ is faithful,
and we can regard $\AC$ as a subcategory of $\G( \AC )$.
Any morphism in $\G(\AC)$ which is equal to a morphism of the form $[\alpha]$ 
for $\alpha \in \AC$ is called \textbf{honest}.

The most prominent feature of $\G( \AC )$ is the operation
of taking pseudo-inverses.

\begin{definition}\label{definition:pseudo_inverse}
 For a span $S = (A \stackrel{\alpha}{\longleftarrow} C \stackrel{\beta}{\longrightarrow} B)$ from $A$ to $B$, we call
 the span $(B \stackrel{\beta}{\longleftarrow} C \stackrel{\alpha}{\longrightarrow} A)$ from $B$ to $A$ its \bfindex{pseudo-inverse}
 and denote it by $S^{-1}$.
 \begin{center}
 \begin{tabular}{ccccccc}
          \begin{tikzpicture}[label/.style={postaction={
            decorate,
            decoration={markings, mark=at position .5 with \node #1;}}}, baseline = (D),
            mylabel/.style={thick, draw=black, align=center, minimum width=0.5cm, minimum height=0.5cm,fill=white}]
            \coordinate (r) at (2,0);
            \coordinate (u) at (0,1.3);
            
            \node (A) {$A$};
            \node (B) at ($(A)+2*(r)$) {$B$};
            \node (C) at ($(A)+(r) -(u)$) {$C$};
            \node (D) at ($(A) - 0.5*(u)$) {};
            \draw[->,thick, dashed] (A) --node[mylabel]{$S$} (B);
            \draw[->,thick] (C) -- node[below]{$\beta$} (B);
            \draw[->,thick] (C) -- node[below]{$\alpha$} (A);
          \end{tikzpicture}
        & & & $\longleftrightarrow$ & & &

          \begin{tikzpicture}[label/.style={postaction={
            decorate,
            decoration={markings, mark=at position .5 with \node #1;}}}, baseline = (D),
            mylabel/.style={thick, draw=black, align=center, minimum width=0.5cm, minimum height=0.5cm,fill=white}]
            \coordinate (r) at (2,0);
            \coordinate (u) at (0,1.3);
            
            \node (A) {$B$};
            \node (B) at ($(A)+2*(r)$) {$A$};
            \node (C) at ($(A)+(r) -(u)$) {$C$};
            \node (D) at ($(A) - 0.5*(u)$) {};
            \draw[->,thick, dashed] (A) --node[mylabel]{$S^{-1}$} (B);
            \draw[->,thick] (C) -- node[below]{$\alpha$} (B);
            \draw[->,thick] (C) -- node[below]{$\beta$} (A);
          \end{tikzpicture}
 \end{tabular}
 \end{center}
\end{definition}

\begin{remark}\label{remark:taking_pseudo_inverses_as_a_functor}
 Taking pseudo-inverses is compatible with stable equivalences.
 Thus, it defines an equivalence of categories
 \[
  (-)^{-1}: \G( \AC )^{\mathrm{op}} \rightarrow \G( \AC ).
 \]
\end{remark}

Now, we show that we may represent every generalized morphism
as a composition of a pseudo-inverse of an honest morphism
with another honest morphism.

\begin{lemma}\label{lemma:symbolic_computation_for_generalized_morphisms}
 Every span $(A \stackrel{\alpha}{\longleftarrow} C \stackrel{\beta}{\longrightarrow} B)$ is equal
 to $[\alpha]^{-1} \cdot [\beta]$
 as generalized morphisms.
\end{lemma}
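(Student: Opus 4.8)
The plan is to unfold both sides of the claimed equality and observe that the composite is, up to a span-compatible isomorphism, the original span. First I would spell out the three honest ingredients. For the morphism $\alpha \colon C \to A$, the honest morphism $[\alpha]$ is the span $(C \stackrel{\id_C}{\longleftarrow} C \stackrel{\alpha}{\longrightarrow} A)$; by Definition \ref{definition:pseudo_inverse}, taking its pseudo-inverse swaps the two legs, so $[\alpha]^{-1} = (A \stackrel{\alpha}{\longleftarrow} C \stackrel{\id_C}{\longrightarrow} C)$. Likewise $[\beta] = (C \stackrel{\id_C}{\longleftarrow} C \stackrel{\beta}{\longrightarrow} B)$.

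Next I would compute the composite $[\alpha]^{-1} \cdot [\beta]$ using the composition rule of Definition \ref{definition:category_of_spans}. This requires the pullback of the cospan $C \stackrel{\id_C}{\longrightarrow} C \stackrel{\id_C}{\longleftarrow} C$ formed by the right leg of $[\alpha]^{-1}$ and the left leg of $[\beta]$. As already used in the text when verifying that $[-]$ is a functor, the pullback along an identity may be chosen to be the object itself with identity projections, so we take $C \times_C C = C$ with both structure maps equal to $\id_C$. The outer span of the composition diagram then has left leg $\id_C \cdot \alpha = \alpha$ and right leg $\id_C \cdot \beta = \beta$, i.e. it is precisely $(A \stackrel{\alpha}{\longleftarrow} C \stackrel{\beta}{\longrightarrow} B)$.

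Finally I would conclude. With this choice of pullback the composite agrees on the nose with the given span; for any other choice of pullback the universal property supplies a compatible isomorphism of the apex objects, so the two coincide as spans in the sense of Definition \ref{definition:category_of_spans}(3). Since equality as spans is finer than stable equivalence (the remark that being stably equivalent is coarser than being equal as spans), the two are in particular equal as generalized morphisms in $\G( \AC )$, which is the claim.

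There is essentially no obstacle here: the only point requiring a word of care is the choice of pullback, and its real content, namely that pulling back along an identity returns the original object, has already been invoked earlier. The lemma is really the bookkeeping step that licenses writing every generalized morphism symbolically as $[\alpha]^{-1} \cdot [\beta]$, which is what turns the subsequent calculus of relations into explicit computations with ordinary morphisms in $\AC$.
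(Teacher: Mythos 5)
Your proposal is correct and follows essentially the same route as the paper's proof: both rest on the observation that a square of identities is a pullback, so the composite $[\alpha]^{-1}\cdot[\beta]$ can be computed with apex $C$ and identity projections, yielding the original span on the nose and hence equality even as spans, a fortiori as generalized morphisms. Your additional remarks on the independence of the choice of pullback and on equality of spans being finer than stable equivalence are accurate and merely make explicit what the paper leaves implicit.
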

\begin{proof}
 A square consisting of identities is a pullback square.
 Thus, we have an equation of generalized morphisms (even as spans):
 \begin{center}
 \begin{tabular}{ccccccc}
          \begin{tikzpicture}[label/.style={postaction={
            decorate,
            decoration={markings, mark=at position .5 with \node #1;}}}, baseline = (D)]
            \coordinate (r) at (1.5,0);
            \coordinate (u) at (0,1.5);
            \node (A) {$A$};
            \node (base) at ($(A) - 0.5*(u)$) {};
            \node (B) at ($(A)+2*(r)$) {$C$};
            \node (C) at ($(A)+4*(r)$) {$B$};
            \node (D) at ($(A) + (r) - (u)$) {$C$};
            \node (E) at ($(D)+2*(r)$) {$C$};
            \node (F) at ($(B)-2*(u)$) {$C$};
            \draw[->,thick, dashed] (A) -- (B);
            \draw[->,thick, dashed] (B) -- (C);
            \draw[->,thick] (D) -- node[left]{$\alpha$} (A);
            \draw[->,thick] (D) -- node[left,inner sep = 0.5em]{$\id$} (B);
            \draw[->,thick] (E) -- node[right,inner sep = 0.5em]{$\id$} (B);
            \draw[->,thick] (E) -- node[right]{$\beta$} (C);
            \draw[->,thick] (F) -- node[left,inner sep = 0.5em]{$\id$} (D);
            \draw[->,thick] (F) -- node[right]{$\id$} (E);
          \end{tikzpicture}
        & & & $=$ & & &
          \begin{tikzpicture}[label/.style={postaction={
            decorate,
            decoration={markings, mark=at position .5 with \node #1;}}}, baseline = (base)]
            \coordinate (r) at (1.5,0);
            \coordinate (u) at (0,1.5);
            
            \node (A) {$A$};
            \node (base) at ($(A) - 0.5*(u)$) {};
            \node (B) at ($(A)+2*(r)$) {$B$};
            \node (D) at ($(A) + (r) - (u)$) {$C$};
            \draw[->,thick, dashed] (A) -- (B);
            \draw[->,thick] (D) -- node[left]{$\alpha$} (A);
            \draw[->,thick] (D) -- node[right]{$\beta$} (B);    
          \end{tikzpicture}
 \end{tabular}
 \end{center}
\end{proof}

\begin{theorem}\label{theorem:epi_mono_split}
 Given a mono $\iota$ in $\AC$, then $[\iota]$ is split in $\G( \AC )$
 with its pseudo-inverse as a retraction.
 Dually, 
 given an epi $\epsilon$ in $\AC$, then $[\epsilon]$ is split in $\G(\AC)$
 with its pseudo-inverse as a section.
\end{theorem}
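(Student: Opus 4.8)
The plan is to verify both splitting identities by directly computing the relevant composite spans according to Definition \ref{definition:category_of_spans} and then comparing the outcome to the identity span. Recall that an honest morphism $[\iota]$ is the span $(A \xleftarrow{\id_A} A \xrightarrow{\iota} B)$ and that its pseudo-inverse $[\iota]^{-1}$ is the span $(B \xleftarrow{\iota} A \xrightarrow{\id_A} A)$ (Definition \ref{definition:pseudo_inverse}). Since the paper uses precomposition, claiming that $[\iota]^{-1}$ is a retraction of $[\iota]$ amounts to proving $[\iota] \cdot [\iota]^{-1} = \id_A$, while claiming that $[\epsilon]^{-1}$ is a section of $[\epsilon]$ amounts to proving $[\epsilon]^{-1} \cdot [\epsilon] = \id_B$.

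For the mono case I would compute $[\iota] \cdot [\iota]^{-1}$. The pullback called for in the composition diagram is the fiber product of the right leg $\iota$ of $[\iota]$ with the left leg $\iota$ of $[\iota]^{-1}$, that is, the pullback of $\iota$ along itself. The key input is that $\iota$ is a monomorphism precisely when the diagonal $A \to A \times_B A$ is an isomorphism, so that $A \times_B A$ may be chosen to be $A$ with both projections equal to $\id_A$. Substituting this into the composition diagram, both legs of the resulting outer span become $\id_A$, so the composite is literally the identity span $(A \xleftarrow{\id_A} A \xrightarrow{\id_A} A)$; hence $[\iota] \cdot [\iota]^{-1} = \id_A$ already as spans, and a fortiori as generalized morphisms.

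For the epi case I would compute $[\epsilon]^{-1} \cdot [\epsilon]$. Here the middle pullback is taken over two identity morphisms and is therefore trivial, so the composite is the span $(B \xleftarrow{\epsilon} A \xrightarrow{\epsilon} B)$. This is not the identity span on the nose, so I would invoke stable equivalence rather than equality of spans: by Lemma \ref{lemma:epimorphism_stably_equivalent}, applied to the identity span $(B \xleftarrow{\id_B} B \xrightarrow{\id_B} B)$ and the epimorphism $\epsilon$, the span $(B \xleftarrow{\epsilon} A \xrightarrow{\epsilon} B)$ is stably equivalent to $\id_B$. Concretely, its associated relation is $\im((\epsilon, \epsilon)) = \im(\epsilon \cdot (\id_B, \id_B)) = \im((\id_B, \id_B))$, the diagonal of $B \oplus B$, since images are unchanged under precomposition with epimorphisms; this is exactly the associated relation of $\id_B$. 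Therefore $[\epsilon]^{-1} \cdot [\epsilon] = \id_B$ in $\G( \AC )$.

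The two cases are formally dual, so an alternative would be to prove one and deduce the other by passing to $\AC^{\op}$; I would avoid this route, since it requires first identifying $\G( \AC^{\op} )$ with $\G( \AC )^{\op}$, which is more overhead than the two short direct computations. The only genuine subtlety is bookkeeping: keeping track of the precomposition convention, so that the correct composite (and not its reverse) is shown to be the identity, and recognizing which case closes on the nose (the mono case, via the pullback of a mono along itself) versus which case truly needs stable equivalence (the epi case, via Lemma \ref{lemma:epimorphism_stably_equivalent}).
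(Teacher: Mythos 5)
Your proposal is correct and takes essentially the same route as the paper: the mono case is exactly the paper's displayed computation, choosing the pullback of $\iota$ along itself to be $A$ with identity projections so that $[\iota]\cdot[\iota]^{-1}$ equals the identity already as spans. For the epi case the paper only says ``the dual statement can be proved analogously,'' and your explicit treatment---reducing $[\epsilon]^{-1}\cdot[\epsilon]$ to the span $(B \stackrel{\epsilon}{\longleftarrow} A \stackrel{\epsilon}{\longrightarrow} B)$ and then invoking Lemma \ref{lemma:epimorphism_stably_equivalent} (equivalently, that the associated relation $\im\big((\epsilon,\epsilon)\big)$ is the diagonal of $B \oplus B$)---is precisely the intended argument, including the correct observation that this case, unlike the mono case, genuinely requires stable equivalence rather than equality of spans.
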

\begin{proof}
 The composition of $[\iota]$ with $[\iota]^{-1}$ yields the diagram
 \begin{center}
          \begin{tikzpicture}[label/.style={postaction={
            decorate,
            decoration={markings, mark=at position .5 with \node #1;}}}]
            \coordinate (r) at (1.5,0);
            \coordinate (u) at (0,1.5);
            
            \node (A) {$A$};
            \node (B) at ($(A)+2*(r)$) {$B$};
            \node (C) at ($(A)+4*(r)$) {$A$};
            \node (D) at ($(A) + (r) - (u)$) {$A$};
            \node (E) at ($(D)+2*(r)$) {$A$};
            \node (F) at ($(B)-2*(u)$) {$A$};
            \draw[->,thick, dashed] (A) -- (B);
            \draw[->,thick, dashed] (B) -- (C);
            \draw[->,thick] (D) -- node[left,yshift= -0.1em]{$\id$} (A);
            \draw[right hook->,thick] (D) -- node[right]{$\iota$} (B);
            \draw[right hook->,thick] (E) -- node[left,inner sep = 0.5em]{$\iota$} (B);
            \draw[->,thick] (E) -- node[right,xshift = 0em,yshift= -0.1em]{$\id$} (C);
            \draw[->,thick] (F) -- node[left,yshift = -0.1em]{$\id$} (D);
            \draw[->,thick] (F) -- node[right,yshift = -0.1em]{$\id$} (E);
          \end{tikzpicture}
        \end{center}
  The dual statement can be proved analogously.
\end{proof}

\begin{corollary}\label{corollary:liftcolift_formula}
 Given a commutative diagram
 \begin{center}
    \begin{tikzpicture}[label/.style={postaction={
            decorate,
            decoration={markings, mark=at position .5 with \node #1;}},
            mylabel/.style={thick, draw=none, align=center, minimum width=0.5cm, minimum height=0.5cm,fill=white}}]
            \coordinate (r) at (3.5,0);
            \coordinate (u) at (0,2);
            \node (A) {$A$};
            \node (B) at ($(A)+(r)$) {$B$};
            \node (C) at ($(A)-(u)$) {$C$};
            \node (D) at ($(C)+(r)$) {$D$};
            
            \draw[->,thick] (A) --node[above]{$\alpha$} (B);
            \draw[->,thick] (C) --node[above]{$\gamma$} (D);
            
            \draw[->>,thick] (C) --node[left]{$\epsilon$} (A);
            \draw[left hook->,thick] (B) --node[right]{$\iota$} (D);
    \end{tikzpicture}
  \end{center}
 in $\AC$ with $\epsilon$ epi and $\iota$ mono,
 we get a commutative diagram 
 \begin{center}
    \begin{tikzpicture}[label/.style={postaction={
            decorate,
            decoration={markings, mark=at position .5 with \node #1;}},
            mylabel/.style={thick, draw=none, align=center, minimum width=0.5cm, minimum height=0.5cm,fill=white}}]
            \coordinate (r) at (3.5,0);
            \coordinate (u) at (0,2);
            \node (A) {$A$};
            \node (B) at ($(A)+(r)$) {$B$};
            \node (C) at ($(A)-(u)$) {$C$};
            \node (D) at ($(C)+(r)$) {$D$};
            
            \draw[->,thick] (A) --node[above]{$[\alpha]$} (B);
            \draw[->,thick] (C) --node[above]{$[\gamma]$} (D);
            
            \draw[left hook->,thick] (A) --node[left]{$[\epsilon]^{-1}$} (C);
            \draw[->>,thick] (D) --node[right]{$[\iota]^{-1}$} (B);
    \end{tikzpicture}
  \end{center}
  in $\G( \AC )$, i.e., the equation
 \[
  [ \alpha ] = [ \epsilon ]^{-1} \cdot [ \gamma ] \cdot [ \iota ]^{-1}
 \]
 holds.
\end{corollary}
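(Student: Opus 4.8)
The plan is to reduce the claimed equation $[\alpha] = [\epsilon]^{-1} \cdot [\gamma] \cdot [\iota]^{-1}$ to a short formal computation inside $\G(\AC)$, using only the functoriality of $[-]$ together with the splitting statements of Theorem \ref{theorem:epi_mono_split}. The key observation is that the hypothesis is a commutative square in $\AC$, which in the pre-composition convention of this paper reads $\epsilon \cdot \alpha \cdot \iota = \gamma$ (the two paths $C \to D$ agree). Since $[-]\colon \AC \to \G(\AC)$ is a functor, I would first rewrite this as the identity of honest generalized morphisms
\[
[\gamma] = [\epsilon] \cdot [\alpha] \cdot [\iota].
\]

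Next I would substitute this expression for $[\gamma]$ into the right-hand side of the target equation and cancel using the two splitting facts. Concretely, Theorem \ref{theorem:epi_mono_split} tells us that for the epi $\epsilon$ the pseudo-inverse is a section, i.e.\ $[\epsilon]^{-1} \cdot [\epsilon] = \id_A$, and that for the mono $\iota$ the pseudo-inverse is a retraction, i.e.\ $[\iota] \cdot [\iota]^{-1} = \id_B$. Using associativity of composition in $\G(\AC)$, the computation then runs
\[
[\epsilon]^{-1} \cdot [\gamma] \cdot [\iota]^{-1} = [\epsilon]^{-1} \cdot [\epsilon] \cdot [\alpha] \cdot [\iota] \cdot [\iota]^{-1} = \id_A \cdot [\alpha] \cdot \id_B = [\alpha],
\]
which is exactly the asserted equality; the commutativity of the second diagram is merely a restatement of this single equation, since its two paths from $A$ to $B$ are $[\alpha]$ and $[\epsilon]^{-1} \cdot [\gamma] \cdot [\iota]^{-1}$.

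The only point requiring care — and the one I expect to be the main (and essentially sole) obstacle — is bookkeeping the direction of composition and the correct side of each cancellation. Because this paper composes in the pre-composition order $\alpha \cdot \beta$ (``first $\alpha$, then $\beta$''), one must check that it is genuinely $[\epsilon]^{-1} \cdot [\epsilon]$ (and not $[\epsilon] \cdot [\epsilon]^{-1}$) that collapses to an identity on the appropriate object, and likewise that $[\iota]$ is cancelled on the right by $[\iota]^{-1}$. Matching these against the precise ``section'' and ``retraction'' wording of Theorem \ref{theorem:epi_mono_split} confirms the cancellations land on the correct sides, after which the argument is purely formal and requires no further input from the internal structure of $\AC$.
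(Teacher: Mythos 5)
Your proof is correct and is essentially the paper's own argument: the paper likewise multiplies the honest identity $[\epsilon]\cdot[\alpha]\cdot[\iota]=[\gamma]$ on the left by $[\epsilon]^{-1}$ and on the right by $[\iota]^{-1}$ and then cancels via Theorem \ref{theorem:epi_mono_split}. Your side-checking of the cancellations ($[\epsilon]^{-1}\cdot[\epsilon]=\id_A$ from the section property for the epi, $[\iota]\cdot[\iota]^{-1}=\id_B$ from the retraction property for the mono) is exactly right under the paper's pre-composition convention.
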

\begin{proof}
 We simply multiply the equation
 \[
  [ \epsilon ] \cdot [ \alpha ] \cdot [ \iota ] =  [ \gamma ]
 \]
 from the left with $[ \epsilon ]^{-1}$ and from the right with $[ \iota ]^{-1}$.
 Then we apply Theorem \ref{theorem:epi_mono_split}.
\end{proof}

\begin{theorem}\label{theorem:computation_rules}
 Given a pullback diagram
 \begin{center}
          \begin{tikzpicture}
          [transform shape,mylabel/.style={thick, draw=black, align=center, minimum width=0.5cm, minimum height=0.5cm,fill=white}]
            \coordinate (r) at (1.5,0);
            \coordinate (u) at (0,1.3);
            \node (B) {$B$};
            \node (D) at ($(B) - (r) - (u)$) {$A$};
            \node (E) at ($(D)+2*(r)$) {$C$};
            \node (F) at ($(B)-2*(u)$) {$A \times_{B} C$};
            \draw[->,thick] (D) -- node[left, inner sep = 0.5em]{$\alpha$} (B);
            \draw[->,thick] (E) -- node[right,inner sep = 0.5em]{$\gamma$} (B);
            \draw[->,thick] (F) -- node[left,yshift=-0.3em]{$\gamma^{\ast}$} (D);
            \draw[->,thick] (F) -- node[right,yshift=-0.3em]{$\alpha^{\ast}$} (E);
          \end{tikzpicture}
\end{center}
 the \bfindex{pullback computation rule}
 \[
 [\alpha] \cdot [\gamma]^{-1} = [\gamma^{\ast}]^{-1} \cdot [\alpha^{\ast}]  
 \]
 holds. Dually, given a pushout square
 \begin{center}
          \begin{tikzpicture}
          [transform shape,mylabel/.style={thick, draw=black, align=center, minimum width=0.5cm, minimum height=0.5cm,fill=white}]
            \coordinate (r) at (1.5,0);
            \coordinate (u) at (0,1.3);
            
            \node (A) {$A \amalg_B C$};
            \node (B) at ($(A)-(u)-(r)$) {$A$};
            \node (C) at ($(B) + 2*(r)$) {$C$};
            \node (D) at ($(A)-2*(u)$) {$B$};
            
            \draw[->,thick] (D) -- node[right,yshift=-0.3em]{$\gamma$} (C);
            \draw[->,thick] (D) -- node[left,yshift=-0.3em]{$\alpha$} (B);
            \draw[->,thick] (C) -- node[right]{$\alpha_{\ast}$} (A);
            \draw[->,thick] (B) -- node[left]{$\gamma_{\ast}$} (A);
          \end{tikzpicture}
\end{center}
the \bfindex{pushout computation rule}
\[
 [\alpha]^{-1} \cdot [\gamma] = [\gamma_{\ast}] \cdot [\alpha_{\ast}]^{-1}
\]
holds.
\end{theorem}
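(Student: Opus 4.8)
The plan is to evaluate each of the two claimed equalities by computing both sides as explicit spans and comparing them in $\G(\AC)$. Since composition in $\Span(\AC)$ is carried out by forming pullbacks (Definition \ref{definition:category_of_spans}), the pullback rule is essentially immediate, whereas the pushout rule needs one genuinely homological input.

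For the pullback rule I first unwind the two sides. By Definition \ref{definition:pseudo_inverse} we have $[\alpha] = (A \xleftarrow{\id} A \xrightarrow{\alpha} B)$ and $[\gamma]^{-1} = (B \xleftarrow{\gamma} C \xrightarrow{\id} C)$, so their composite is obtained by pulling back the cospan $A \xrightarrow{\alpha} B \xleftarrow{\gamma} C$. Choosing the given square as this pullback, the outer span of Definition \ref{definition:category_of_spans} is exactly $(A \xleftarrow{\gamma^*} A\times_B C \xrightarrow{\alpha^*} C)$. Computing $[\gamma^*]^{-1}\cdot[\alpha^*]$ instead requires a pullback of two identity morphisms, which may be taken to be $A\times_B C$ itself; its outer span is again $(A \xleftarrow{\gamma^*} A\times_B C \xrightarrow{\alpha^*} C)$. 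Hence both sides coincide already as spans, and therefore as generalized morphisms.

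For the pushout rule I would argue dually, but now the two spans only agree up to stable equivalence. Unwinding as before, $[\alpha]^{-1}\cdot[\gamma]$ again involves a pullback of identities and reduces to the span $(A \xleftarrow{\alpha} B \xrightarrow{\gamma} C)$, whereas $[\gamma_*]\cdot[\alpha_*]^{-1}$ forms the pullback of $A \xrightarrow{\gamma_*} A\amalg_B C \xleftarrow{\alpha_*} C$ and yields $(A \xleftarrow{p} A\times_{A\amalg_B C} C \xrightarrow{q} C)$ with the two pullback projections $p$ and $q$. The pushout square gives $\alpha\cdot\gamma_* = \gamma\cdot\alpha_*$, so the universal property of this pullback produces a comparison morphism $\theta\colon B \to A\times_{A\amalg_B C} C$ with $\theta\cdot p = \alpha$ and $\theta\cdot q = \gamma$. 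Precomposing the pullback span with $\theta$ returns precisely $(A \xleftarrow{\alpha} B \xrightarrow{\gamma} C)$, so by Lemma \ref{lemma:epimorphism_stably_equivalent} the two spans are stably equivalent---hence equal as generalized morphisms---provided $\theta$ is an epimorphism.

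The one non-formal step, and the main obstacle, is this epimorphism claim; I would settle it using the homological description of pushouts in an abelian category. The pushout $A\amalg_B C$ is the cokernel of the morphism $B \to A\oplus C$ assembled from $\alpha$ and $\gamma$, so the sequence $B \to A\oplus C \to A\amalg_B C \to 0$ is exact, while the pullback $A\times_{A\amalg_B C} C$ is the kernel of the induced map $A\oplus C \to A\amalg_B C$. Exactness at $A\oplus C$ identifies the image of $B \to A\oplus C$ with this kernel (up to the usual sign convention on the second summand), and since $\theta$ is exactly the corestriction of $B \to A\oplus C$ onto that kernel, $\theta$ is an epimorphism. This completes the pushout rule via Lemma \ref{lemma:epimorphism_stably_equivalent}.
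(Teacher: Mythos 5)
Your proposal is correct and takes essentially the same route as the paper: the pullback rule is read off directly from the composition diagram (the paper packages this as Lemma \ref{lemma:symbolic_computation_for_generalized_morphisms}), and your pushout rule rests on the identical key fact that the pullback of the pushout cocone, embedded in $A \oplus C$, identifies with the image of $(\alpha,\gamma)\colon B \to A \oplus C$ because images in abelian categories are kernel embeddings of cokernel projections. Your certification of stable equivalence via the comparison morphism $\theta$ and Lemma \ref{lemma:epimorphism_stably_equivalent} is only a cosmetic repackaging of the paper's argument, which instead rewrites $[\gamma_{\ast}] \cdot [\alpha_{\ast}]^{-1}$ as $[\alpha_{\ast}^{\ast}]^{-1} \cdot [\gamma_{\ast}^{\ast}]$ using the already-established pullback rule and then compares associated relations directly.
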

\begin{proof}
 From the diagram
\begin{center}
          \begin{tikzpicture}
          [transform shape,mylabel/.style={thick, draw=black, align=center, minimum width=0.5cm, minimum height=0.5cm,fill=white}]
            \coordinate (r) at (1.5,0);
            \coordinate (u) at (0,1.5);
            
            \node (A) {$A$};
            \node (B) at ($(A)+2*(r)$) {$B$};
            \node (C) at ($(A)+4*(r)$) {$C$};
            \node (D) at ($(A) + (r) - (u)$) {$A$};
            \node (E) at ($(D)+2*(r)$) {$C$};
            \node (F) at ($(B)-2*(u)$) {$A \times_{B} C$};
            \draw[->,thick, dashed] (A) --node[mylabel]{$[\alpha]$} (B);
            \draw[->,thick, dashed] (B) --node[mylabel]{$[\gamma]^{-1}$} (C);
            \draw[->,thick] (D) -- node[left]{$\id_A$} (A);
            \draw[->,thick] (D) -- node[right]{$\alpha$} (B);
            \draw[->,thick] (E) -- node[left,inner sep = 0.5em]{$\gamma$} (B);
            \draw[->,thick] (E) -- node[right,inner sep = 0.5em]{$\id_C$} (C);
            \draw[->,thick] (F) -- node[left,inner sep = 0.5em]{$\gamma^{\ast}$} (D);
            \draw[->,thick] (F) -- node[right]{$\alpha^{\ast}$} (E);
          \end{tikzpicture}
\end{center}
and Lemma \ref{lemma:symbolic_computation_for_generalized_morphisms}, 
we get the pullback computation rule.

Next, we consider the situation for the pushout computation rule.
Let \[\alpha_{\ast}^{\ast}: A \times_{A \amalg_B C} C \rightarrow A\]
 and \[\gamma_{\ast}^{\ast}: A \times_{A \amalg_B C} C \rightarrow C\]
 be the pullback projections of $\gamma_{\ast}$, $\alpha_{\ast}$:
 \begin{center}
          \begin{tikzpicture}
          [transform shape,mylabel/.style={thick, draw=black, align=center, minimum width=0.5cm, minimum height=0.5cm,fill=white}]
            \coordinate (r) at (1.5,0);
            \coordinate (u) at (0,1.3);
            
            \node (A) {$A \amalg_B C$};
            \node (B) at ($(A)-(u)-(r)$) {$A$};
            \node (C) at ($(B) + 2*(r)$) {$C$};
            \node (D) at ($(A)-2*(u)$) {$A \times_{A \amalg_B C} C$.};
            
            \draw[->,thick] (D) -- node[right,yshift=-0.3em]{$\gamma_{\ast}^{\ast}$} (C);
            \draw[->,thick] (D) -- node[left,yshift=-0.3em]{$\alpha_{\ast}^{\ast}$} (B);
            \draw[->,thick] (C) -- node[right]{$\alpha_{\ast}$} (A);
            \draw[->,thick] (B) -- node[left]{$\gamma_{\ast}$} (A);
          \end{tikzpicture}
\end{center}
 By the pullback computation rule, we have
 \[
  [\gamma_{\ast}] \cdot [\alpha_{\ast}]^{-1}  =  [\alpha_{\ast}^{\ast}]^{-1} \cdot [\gamma_{\ast}^{\ast}].
 \]
 But taking pushout followed by taking pullback yields a monomorphism
 \[
  ( \alpha_{\ast}^{\ast}, \gamma_{\ast}^{\ast} ): A \times_{A \amalg_B C} C \longrightarrow A \oplus C
 \]
 which identifies with
the image embedding
 of the morphism
 \[
  ( \alpha, \gamma ): B \longrightarrow A \oplus C,
 \]
 since images in abelian categories are defined as the kernel embeddings of cokernel projections.
 It follows that 
 \[
  [\alpha_{\ast}^{\ast}]^{-1} \cdot [\gamma_{\ast}^{\ast}]  = [\alpha]^{-1} \cdot [\gamma].
 \]
\end{proof}

\subsection{Cohomology}

Generalized morphisms are a convenient tool to write
down closed formulas for morphisms whose existence
is induced by some prescribed diagram.
We demonstrate this principle by means of a standard
example in homological algebra, namely the
induced morphism on cohomology.

\begin{theorem}
 Suppose given a commutative diagram in $\AC$ of the following form:
 \begin{center}
    \begin{tikzpicture}[label/.style={postaction={
            decorate,
            decoration={markings, mark=at position .5 with \node #1;}},
            mylabel/.style={thick, draw=none, align=center, minimum width=0.5cm, minimum height=0.5cm,fill=white}}]
            \coordinate (r) at (2.5,0);
            \coordinate (u) at (0,1.5);
            \node (A) {$A$};
            \node (B) at ($(A)+(r)$) {$B$};
            \node (C) at ($(B)+(r)$) {$C$};
            
            \node (Ap) at ($(A)-(u)$) {$A'$};
            \node (Bp) at ($(Ap)+(r)$) {$B'$};
            \node (Cp) at ($(Bp)+(r)$) {$C'$};
            
            \node (KB) at ($(B)+(u)$) {$\kernel( d_B )$};
            \node (HB) at ($(KB)+(u)$) {$\frac{\kernel(d_B)}{\image(d_A)}$};
            
            \node (KBp) at ($(Bp)-(u)$) {$\kernel( d_{B'} )$};
            \node (HBp) at ($(KBp)-(u)$) {$\frac{\kernel(d_{B'})}{\image(d_{A'})}$};
            
            \draw[->,thick] (A) --node[above]{$d_A$} (B);
            \draw[->,thick] (B) --node[above]{$d_B$} (C);
            
            \draw[->,thick] (Ap) --node[above]{$d_{A'}$} (Bp);
            \draw[->,thick] (Bp) --node[above]{$d_{B'}$} (Cp);
            
            \draw[left hook->,thick] (KB) --node[left]{$\iota_B$} (B);
            \draw[->>,thick] (KB) --node[left]{$\epsilon_B$} (HB);
            
            \draw[left hook->,thick] (KBp) --node[left]{$\iota_{B'}$} (Bp);
            \draw[->>,thick] (KBp) --node[left]{$\epsilon_{B'}$} (HBp);
            
            \draw[->,thick] (A) -- (Ap);
            \draw[->,thick] (B) --node[left]{$\beta$} (Bp);
            \draw[->,thick] (C) -- (Cp);
            
    \end{tikzpicture}
  \end{center}
  where we have $\im( d_A ) \subseteq \kernel( d_B )$,
  $\im( d_{A'} ) \subseteq \kernel( d_{B'} )$,
  and $\iota_B, \iota_{B'}$ are the kernel embeddings,
  and $\epsilon_B$, $\epsilon_{B'}$ are the natural projections.
  Then the induced morphism on cohomologies 
  \[
   \frac{\kernel(d_B)}{\image(d_A)} \longrightarrow \frac{\kernel(d_{B'})}{\image(d_{A'})}
  \]
  is given by the following composition of generalized morphisms:
  \[
   [ \epsilon_B ]^{-1} \cdot [ \iota_B ] \cdot [ \beta ] \cdot [ \iota_{B'} ]^{-1} \cdot [ \epsilon_{B'} ].
  \]
\end{theorem}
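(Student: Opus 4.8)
The plan is to prove the identity by showing that the displayed composition of generalized morphisms equals $[\beta_H]$, where $\beta_H$ denotes the honest induced map on cohomology, and then collapsing all the pseudo-inverses by means of Theorem \ref{theorem:epi_mono_split}. The whole argument splits into a (routine) homological part, in which I build two honest auxiliary morphisms together with their naturality squares, and a (formal) part, in which I run the resulting equations through the functor $[-]\colon \AC \to \G( \AC )$.

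First I would produce the auxiliary honest morphisms. Write $f\colon A \to A'$ and $h\colon C \to C'$ for the two unlabelled vertical maps. Because the right-hand square commutes, $\iota_B \cdot \beta \cdot d_{B'} = \iota_B \cdot d_B \cdot h = 0$, so $\iota_B \cdot \beta$ factors uniquely through the kernel embedding $\iota_{B'}$, giving a morphism $\overline{\beta}\colon \kernel(d_B) \to \kernel(d_{B'})$ with $\iota_B \cdot \beta = \overline{\beta} \cdot \iota_{B'}$. Next, let $\tilde{d}_A \colon A \to \kernel(d_B)$ be the factorization of $d_A$ through $\iota_B$, which exists since $\image(d_A) \subseteq \kernel(d_B)$, so that $\epsilon_B = \cokernel( \tilde{d}_A )$. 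Using the commutativity of the left-hand square, $d_A \cdot \beta = f \cdot d_{A'}$, together with the fact that $\iota_{B'}$ is a monomorphism, one checks that $\tilde{d}_A \cdot \overline{\beta} \cdot \epsilon_{B'} = 0$; hence $\overline{\beta} \cdot \epsilon_{B'}$ factors through $\epsilon_B$ via $\beta_H \coloneqq \CokernelColift( \tilde{d}_A, \overline{\beta} \cdot \epsilon_{B'} )$, which is exactly the induced morphism on cohomology and satisfies $\epsilon_B \cdot \beta_H = \overline{\beta} \cdot \epsilon_{B'}$. It is uniquely characterized by this equation because $\epsilon_B$ is an epimorphism.

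Now I would carry out the algebra in $\G( \AC )$. Since $[-]$ is a faithful functor, both naturality squares pass to generalized morphisms as $[\iota_B] \cdot [\beta] = [\overline{\beta}] \cdot [\iota_{B'}]$ and $[\epsilon_B] \cdot [\beta_H] = [\overline{\beta}] \cdot [\epsilon_{B'}]$. By Theorem \ref{theorem:epi_mono_split}, the mono $\iota_{B'}$ gives $[\iota_{B'}] \cdot [\iota_{B'}]^{-1} = \id$ and the epi $\epsilon_B$ gives $[\epsilon_B]^{-1} \cdot [\epsilon_B] = \id$. Substituting and cancelling then yields
\begin{align*}
 [\epsilon_B]^{-1} \cdot [\iota_B] \cdot [\beta] \cdot [\iota_{B'}]^{-1} \cdot [\epsilon_{B'}]
 &= [\epsilon_B]^{-1} \cdot [\overline{\beta}] \cdot \big( [\iota_{B'}] \cdot [\iota_{B'}]^{-1} \big) \cdot [\epsilon_{B'}] \\
 &= [\epsilon_B]^{-1} \cdot [\overline{\beta}] \cdot [\epsilon_{B'}] \\
 &= [\epsilon_B]^{-1} \cdot [\epsilon_B] \cdot [\beta_H] \\
 &= [\beta_H],
\end{align*}
which is the asserted formula. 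Here the first equality uses the kernel square, the second collapses the mono $\iota_{B'}$, the third uses the cohomology square, and the last collapses the epi $\epsilon_B$; all regroupings are legitimate by associativity of composition in $\G( \AC )$.

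I expect the only delicate point to be the element-free verification in the second paragraph, namely that $\beta$ carries cocycles to cocycles and coboundaries to coboundaries, encoded by the two squares $\iota_B \cdot \beta = \overline{\beta} \cdot \iota_{B'}$ and $\epsilon_B \cdot \beta_H = \overline{\beta} \cdot \epsilon_{B'}$. Both are obtained purely from the commutativity of the two squares of the given diagram and the universal properties of $\iota_{B'}$ as a kernel and $\epsilon_B$ as a cokernel, with no recourse to elements. Once these squares are in place, the passage to $\G( \AC )$ and the cancellation of pseudo-inverses are immediate formal consequences of the splitting behaviour of $[-]$ on monomorphisms and epimorphisms recorded in Theorem \ref{theorem:epi_mono_split}.
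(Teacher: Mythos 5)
Your proposal is correct and follows essentially the same route as the paper: restrict $\beta$ to the kernels, induce the map on cohomology by the cokernel property, and then eliminate the pseudo-inverses in $\G( \AC )$. The only cosmetic difference is that you re-derive the two cancellations directly from Theorem \ref{theorem:epi_mono_split}, whereas the paper packages exactly this step as two applications of Corollary \ref{corollary:liftcolift_formula} (once for the epi $\epsilon_B$, once for the mono $\iota_{B'}$) and substitutes one formula into the other.
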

\begin{proof}
The induced morphism on cohomologies is constructed by the cokernel functor applied to the commutative square
\begin{center}
    \begin{tikzpicture}[label/.style={postaction={
            decorate,
            decoration={markings, mark=at position .5 with \node #1;}},
            mylabel/.style={thick, draw=none, align=center, minimum width=0.5cm, minimum height=0.5cm,fill=white}}]
            \coordinate (r) at (2.5,0);
            \coordinate (u) at (0,1.5);
            \node (A) {$\image( d_A )$};
            \node (B) at ($(A)+(r)$) {$\kernel( d_B )$};
            
            \node (Ap) at ($(A)-(u)$) {$\image( d_{A'} )$};
            \node (Bp) at ($(Ap)+(r)$) {$\kernel( d_{B'} )$};
            
            \draw[right hook->,thick] (A) -- (B);
            
            \draw[right hook->,thick] (Ap) -- (Bp);
            \draw[->,thick] (A) -- (Ap);
            \draw[->,thick] (B) -- (Bp);
            
    \end{tikzpicture}
  \end{center}
which itself is defined by restricting $\beta$.
Thus, we have a commutative diagram
\begin{center}
    \begin{tikzpicture}[label/.style={postaction={
            decorate,
            decoration={markings, mark=at position .5 with \node #1;}},
            mylabel/.style={thick, draw=none, align=center, minimum width=0.5cm, minimum height=0.5cm,fill=white}}]
            \coordinate (r) at (2.5,0);
            \coordinate (u) at (0,2);
            \node (A) {$B$};
            \node (B) at ($(A)+(r)$) {$\kernel( d_B )$};
            \node (H) at ($(B)+(r)$) {$\frac{\kernel(d_B)}{\image(d_A)}$};
            
            \node (Ap) at ($(A)-(u)$) {$B'$};
            \node (Bp) at ($(Ap)+(r)$) {$\kernel( d_{B'} )$};
            \node (Hp) at ($(Bp)+(r)$) {$\frac{\kernel(d_{B'})}{\image(d_{A'})}$};
            \draw[left hook->,thick] (B) --node[above]{$\iota_B$} (A);
            
            \draw[left hook->,thick] (Bp) --node[above]{$\iota_{B'}$} (Ap);
            \draw[->,thick] (A) --node[left]{$\beta$} (Ap);
            \draw[->,thick] (B) --node[left]{$\gamma$} (Bp);
            \draw[->>,thick] (B) --node[above]{$\epsilon_{B}$} (H);
            \draw[->>,thick] (Bp) --node[above]{$\epsilon_{B'}$} (Hp);
            \draw[->,thick,dashed] (H) --node[left]{$\delta$} (Hp);
    \end{tikzpicture}
  \end{center}
  where the dashed arrow $\delta$ is the induced morphism on cohomologies.
  
  Now, since $\epsilon_B$ is an epi, by Corollary \ref{corollary:liftcolift_formula} we have
  \[
   [\delta] = [ \epsilon_B ]^{-1} \cdot [ \gamma ] \cdot [ \epsilon_{B'} ].
  \]
  Moreover, since $\iota_{B'}$ is a mono, by Corollary \ref{corollary:liftcolift_formula} we have
  \[
   [\gamma] = [ \iota_B ] \cdot [ \beta ] \cdot [ \iota_{B'} ]^{-1}.
  \]
  Substituting the latter formula in the former yields the claim.
\end{proof}

\subsection{Snake lemma}

The induced morphism in the famous snake lemma
can also be constructed as a composition
of the obvious generalized morphisms.
For seeing this, we analyze the construction of
the snake following \cite{MLCWM} in the light of the theory of generalized morphisms.

The starting point of the snake lemma is a commutative diagram in $\AC$ with exact rows:
\begin{center}
          \begin{tikzpicture}[transform shape,mylabel/.style={thick, draw=black, align=center, minimum width=0.5cm, minimum height=0.5cm,fill=white}]
            \coordinate (r) at (2.5,0);
            \coordinate (u) at (0,2);
            \node (A) {$A$};
            \node (B) at ($(A) + (r)$) {$B$};
            \node (C) at ($(B) + (r)$) {$C$};
            \node (0u) at ($(C) + (r)$) {$0$};
            \node (ker) at ($(C) + (u)$) {$\kernel( \gamma )$};
            \node (Ap) at ($(A) - (u)$) {$A'$};
            \node (Bp) at ($(Ap) + (r)$) {$B'$};
            \node (Cp) at ($(Bp) + (r)$) {$C'$};
            \node (0d) at ($(Ap) - (r)$) {$0$};
            
            \node (coker) at ($(Ap) - (u)$) {$\cokernel( \alpha )$};
            \draw[->,thick] (A) --node[above]{$\delta$} (B);
            \draw[->,thick] (B) --node[above]{$\epsilon$} (C);
            \draw[->,thick] (C) -- (0u);
            \draw[->,thick] (Ap) --node[above]{$\iota$} (Bp);
            \draw[->,thick] (Bp) --node[above]{$\nu$} (Cp);
            \draw[->,thick] (0d) -- (Ap);
            \draw[->,thick] (ker) --node[right]{$\eta\coloneqq \KernelEmbedding( \gamma )$} (C);
            \draw[->,thick] (Ap) --node[right]{$\zeta\coloneqq \CokernelProjection( \alpha )$} (coker);
            \draw[->,thick] (A) --node[right]{$\alpha$} (Ap);
            \draw[->,thick] (B) --node[right]{$\beta$} (Bp);
            \draw[->,thick] (C) --node[right]{$\gamma$} (Cp);
            
            \node (0A) at ($(A) - (r)$) {$0$};
            \node (0Cp) at ($(Cp) + (r)$) {$0$};
            \draw[->,thick] (0A) -- (A);
            \draw[->,thick] (Cp) -- (0Cp);
          \end{tikzpicture}
\end{center}
In \cite{MLCWM}, Mac Lane constructs the snake morphism
\[
 \delta: \kernel( \gamma ) \longrightarrow \cokernel( \alpha )
\]
by first computing the pullback
\begin{center}
    \begin{tikzpicture}[label/.style={postaction={
            decorate,
            decoration={markings, mark=at position .5 with \node #1;}},
            mylabel/.style={thick, draw=none, align=center, minimum width=0.5cm, minimum height=0.5cm,fill=white}}]
            \coordinate (r) at (3.5,0);
            \coordinate (u) at (0,2);
            \node (A) {$\kernel( \gamma ) \times_C B$};
            \node (B) at ($(A)+(r)$) {$\kernel( \gamma )$};
            \node (C) at ($(A)-(u)$) {$B$};
            \node (D) at ($(C)+(r)$) {$C$};
            
            \draw[->,thick] (A) --node[above]{$\epsilon^{\ast}$} (B);
            \draw[->,thick] (C) --node[above]{$\epsilon$} (D);
            
            \draw[left hook->,thick] (A) --node[left]{$\eta^{\ast}$} (C);
            \draw[left hook->,thick] (B) --node[right]{$\eta$} (D);
    \end{tikzpicture}
\end{center}
and pushout
\begin{center}
    \begin{tikzpicture}[label/.style={postaction={
            decorate,
            decoration={markings, mark=at position .5 with \node #1;}},
            mylabel/.style={thick, draw=none, align=center, minimum width=0.5cm, minimum height=0.5cm,fill=white}}]
            \coordinate (r) at (3.5,0);
            \coordinate (u) at (0,2);
            \node (A) {$A'$};
            \node (B) at ($(A)+(r)$) {$B'$};
            \node (C) at ($(A)-(u)$) {$\cokernel( \alpha )$};
            \node (D) at ($(C)+(r)$) {$\cokernel( \alpha ) \amalg_{A'} B'$};
            
            \draw[->,thick] (A) --node[above]{$\iota$} (B);
            \draw[->,thick] (C) --node[above]{$\iota_{\ast}$} (D);
            
            \draw[->>,thick] (A) --node[left]{$\zeta$} (C);
            \draw[->>,thick] (B) --node[right]{$\zeta_{\ast}$} (D);
    \end{tikzpicture}
\end{center}
and second proving the existence of 
a unique morphism $\delta$ rendering the diagram
\begin{center}
    \begin{tikzpicture}[label/.style={postaction={
            decorate,
            decoration={markings, mark=at position .5 with \node #1;}},
            mylabel/.style={thick, draw=none, align=center, minimum width=0.5cm, minimum height=0.5cm,fill=white}}]
            \coordinate (r) at (6,0);
            \coordinate (u) at (0,2);
            \node (A) {$\kernel( \gamma )$};
            \node (B) at ($(A)+(r)$) {$\cokernel(\alpha)$};
            \node (C) at ($(A)-(u)$) {$\kernel( \gamma ) \times_C B$};
            \node (D) at ($(C)+(r)$) {$\cokernel( \alpha ) \amalg_{A'} B'$};
            
            \draw[->,thick,dashed] (A) --node[above]{$\delta$} (B);
            \draw[->,thick] (C) --node[above]{$\eta^{\ast} \cdot \beta \cdot \zeta_{\ast}$} (D);
            
            \draw[->>,thick] (C) --node[left]{$\epsilon^{\ast}$} (A);
            \draw[left hook->,thick] (B) --node[right]{$\iota_{\ast}$} (D);
    \end{tikzpicture}
  \end{center}
commutative.

Analyzing this process in the light of generalized morphisms,
the first step of taking the pullback/pushout 
can be interpreted as rewriting the generalized morphisms
\begin{equation}\label{equation:term1}
 [ \eta ] \cdot [ \epsilon ]^{-1} = [ \epsilon^{\ast} ]^{-1} \cdot [ \eta^{\ast} ]
\end{equation}
and
\begin{equation}\label{equation:term2}
 [ \iota ]^{-1} \cdot [ \zeta ] = [ \zeta_{\ast} ] \cdot [ \iota_{\ast} ]^{-1}
\end{equation}
employing the pullback/pushout computation rule.
From Corollary \ref{corollary:liftcolift_formula}, we know that
we can produce $\delta$ as the composition
\begin{equation}\label{equation:term3}
 [\delta] = [ \epsilon^{\ast} ]^{-1} \cdot [ \eta^{\ast} ] \cdot [ \beta ] \cdot [ \zeta_{\ast} ] \cdot [ \iota_{\ast} ]^{-1}.
\end{equation}
Substituting \eqref{equation:term1} and \eqref{equation:term2} in \eqref{equation:term3},
the equation
\[
 [\delta] = [ \eta ] \cdot [ \epsilon ]^{-1} \cdot [ \beta ] \cdot [ \iota ]^{-1} \cdot [ \zeta ]
\]
follows, which is nothing but straightforwardly following the arrows
regardless of their direction from $\kernel( \gamma )$ to $\cokernel( \alpha )$:
\begin{center}
          \begin{tikzpicture}[transform shape,mylabel/.style={thick, draw=black, align=center, minimum width=0.5cm, minimum height=0.5cm,fill=white}]
            \coordinate (r) at (2.5,0);
            \coordinate (u) at (0,2);
            \node (A) {};
            \node (B) at ($(A) + (r)$) {$B$};
            \node (C) at ($(B) + (r)$) {$C$};
            \node (0u) at ($(C) + (r)$) {};
            \node (ker) at ($(C) + (u)$) {$\kernel( \gamma )$};
            \node (Ap) at ($(A) - (u)$) {$A'$};
            \node (Bp) at ($(Ap) + (r)$) {$B'$};
            \node (Cp) at ($(Bp) + (r)$) {};
            \node (0d) at ($(Ap) - (r)$) {};
            
            \node (coker) at ($(Ap) - (u)$) {$\cokernel( \alpha )$};
            
            \draw[->,thick] (C) --node[above]{$[\epsilon]^{-1}$} (B);
            
            \draw[->,thick] (Bp) --node[above]{$[\iota]^{-1}$} (Ap);

            \draw[->,thick] (ker) --node[right]{$[\eta]$} (C);
            \draw[->,thick] (Ap) --node[right]{$[\zeta]$} (coker);
            
            \draw[->,thick] (B) --node[right]{$[\beta]$} (Bp);

          \end{tikzpicture}
\end{center}

\begin{remark}
 This is not a proof of the snake lemma, but a way to construct
 the connecting homomorphism once we know it exists.
 For a proof of the snake lemma using the language of generalized morphisms,
 see \cite[Lemma II.2.1]{PosurDoktor}
\end{remark}

\subsection{Generalized homomorphism theorem}\label{subsection:gen_hom_thm}

To any morphism $\alpha: A \longrightarrow B$ in an abelian category $\AC$,
we can associate two canonical subobjects:
its image $\im(\alpha)$ and its kernel $\kernel(\alpha)$.
The homomorphism theorem states that, using these canonical subobjects,
we get a commutative diagram
\begin{center}
    \begin{tikzpicture}[label/.style={postaction={
            decorate,
            decoration={markings, mark=at position .5 with \node #1;}},
            mylabel/.style={thick, draw=none, align=center, minimum width=0.5cm, minimum height=0.5cm,fill=white}}]
            \coordinate (r) at (3.5,0);
            \coordinate (u) at (0,2);
            \node (A) {$A$};
            \node (B) at ($(A)+(r)$) {$B$};
            \node (C) at ($(A)-(u)$) {$\frac{A}{\kernel( \alpha )}$};
            \node (D) at ($(C)+(r)$) {$\im( \alpha )$};
            
            \draw[->,thick] (A) --node[above]{$\alpha$} (B);
            \draw[->,thick] (C) --node[above]{$\widetilde{\alpha}$} node[below]{$\simeq$} (D);
            
            \draw[->>,thick] (A) -- (C);
            \draw[left hook->,thick] (D) -- (B);
    \end{tikzpicture}
\end{center}

Given a generalized morphism 
\begin{tikzpicture}[label/.style={postaction={
            decorate,
            decoration={markings, mark=at position .5 with \node #1;}},
            mylabel/.style={thick, draw=none, align=center, minimum width=0.5cm, minimum height=0.5cm,fill=white}},baseline=(A)]
            \coordinate (r) at (1.5,0);
            \coordinate (u) at (0,0.5);
            \node (A) {$A$};
            \node (B) at ($(A)+2*(r)$) {$B$};
            \node (C) at ($(A)-(u) + (r)$) {$C$};
            
            \draw[->,dashed] (A) --node[above]{$\alpha$} (B);
            \draw[->] (C) --node[left,yshift=-0.4em,xshift=0.2em]{$\lambda$} (A);
            \draw[->] (C) --node[right,yshift=-0.4em,xshift=-0.4em]{$\rho$} (B);   
\end{tikzpicture},
we have four canonical subobjects:
\begin{itemize}
 \setlength\itemsep{1em}
 \item \textbf{Domain:} \\$\domain( \alpha ) \coloneqq \image( \lambda ) \subseteq A$
 \item \textbf{Generalized kernel:} \\$\gkernel( \alpha ) \coloneqq \lambda( \kernel( \rho ) ) \subseteq A$
 \item \textbf{Generalized image:} \\$\generalizedimage( \alpha ) \coloneqq \image( \rho ) \subseteq B$
 \item \textbf{Defect:} \\$\defect( \alpha ) \coloneqq \rho( \kernel( \lambda ) ) \subseteq B$
\end{itemize}

We claim that
a generalized homomorphism theorem holds, namely,
the existence of a commutative diagram
\begin{center}
    \begin{tikzpicture}[label/.style={postaction={
            decorate,
            decoration={markings, mark=at position .5 with \node #1;}},
            mylabel/.style={thick, draw=none, align=center, minimum width=0.5cm, minimum height=0.5cm,fill=white}}]
            \coordinate (r) at (3.5,0);
            \coordinate (u) at (0,2);
            \node (A) {$A$};
            \node (B) at ($(A)+(r)$) {$B$};
            \node (C) at ($(A)-(u)$) {$\frac{\domain{(\alpha)}}{\gkernel( \alpha )}$};
            \node (D) at ($(C)+(r)$) {$\frac{\generalizedimage( \alpha )}{\defect( \alpha )}$};
            
            \draw[->,thick,dashed] (A) --node[above]{$\alpha$} (B);
            \draw[->,thick] (C) --node[above]{$\widetilde{\alpha}$} node[below]{$\simeq$} (D);
            
            \draw[->>,thick,dashed] (A) -- (C);
            \draw[left hook->,thick,dashed] (D) -- (B);
    \end{tikzpicture}
\end{center}

The two vertical arrows are simply given by the generalized subquotient projection
\[
 A \hookleftarrow \domain( \alpha ) \twoheadrightarrow \frac{\domain(\alpha)}{\gkernel(\alpha)},
\]
which is an epimorphism in $\G( \AC )$ by Theorem \ref{theorem:epi_mono_split},
and the generalized subquotient injection
\[
 \frac{\generalizedimage(\alpha)}{\defect(\alpha)} \twoheadleftarrow \generalizedimage(\alpha) \hookrightarrow B,
\]
which is a monomorphism in $\G( \AC )$ also by Theorem \ref{theorem:epi_mono_split}.

The validity of the generalized homomorphism theorem can be easily extracted from
the following commutative diagram and from the pushout computation rule:

\begin{center}
    \begin{tikzpicture}[label/.style={postaction={
            decorate,
            decoration={markings, mark=at position .5 with \node #1;}},
            mylabel/.style={thick, draw=none, align=center, minimum width=0.5cm, minimum height=0.5cm,fill=white}}]
            \coordinate (r) at (3.5,0);
            \coordinate (u) at (0,2);
            \node (A) {$A$};
            \node (B) at ($(A)+2*(r)$) {$B$};
            \node (C) at ($(A)-(u) + (r)$) {$C$};
            \node (Il) at ($(C)-(r)$) {$\image( \lambda )$};
            \node (Ir) at ($(C)+(r)$) {$\image( \rho )$};
            \node (P) at ($(C)-(u)$) {$\image( \lambda ) \amalg_{C} \image( \rho )$};
            \node (Ql) at ($(P)-(r)$) {$\frac{\image( \lambda )}{\lambda( \kernel( \rho ) )}$};
            \node (Qr) at ($(P)+(r)$) {$\frac{\image( \rho )}{\rho( \kernel( \lambda ) )}$};
            
            \draw[->,thick] (C) --node[above]{$\lambda$} (A);
            \draw[->,thick] (C) --node[above]{$\rho$} (B);
            \draw[->>,thick] (C) -- (Il);
            \draw[left hook->,thick] (Il) -- (A);
            \draw[->>,thick] (C) -- (Ir);
            \draw[right hook->,thick] (Ir) -- (B);
            
            \draw[->>,thick] (Il) -- (Ql);
            \draw[->>,thick] (Il) -- (P);
            
            \draw[->>,thick] (Ir) -- (Qr);
            \draw[->>,thick] (Ir) -- (P);
            
            \draw[->,thick] (Ql) --node[above,yshift=-0.1em] {$\simeq$} (P);
            \draw[->,thick] (P) --node[above,yshift=-0.1em] {$\simeq$} (Qr);
            \draw[->,thick] (P) --node[above,yshift=-0.1em] {$\simeq$} (Qr);
            
            \draw (Ql) [->,thick,out=-25,in=180+25] to node[below]{$\widetilde{\alpha}$} (Qr);

    \end{tikzpicture}
\end{center}

\subsection{Computing spectral sequences}
This subsection serves as an introduction to spectral sequences.
We use generalized morphisms as a fundamental tool in our
explanation. This has two advantages:
\begin{enumerate}
 \item The main idea behind spectral sequences becomes quite transparent when you already
 have generalized morphisms available as a tool.
 \item Instead of mere existence theorems, we will get explicit formulas for all the differentials
 within a spectral sequence.
\end{enumerate}

Let $\AC$ be an abelian category.
A spectral sequence is a lot of data that can naturally be associated to
a given filtered cochain complex, i.e., a cochain complex
\begin{center}
          \begin{tikzpicture}[transform shape,mylabel/.style={thick, draw=black, align=center, minimum width=0.5cm, minimum height=0.5cm,fill=white}]
            \coordinate (r) at (2.5,0);
            \coordinate (u) at (0,2);
            \node (dotsl) {$\dots$};
            \node (A) at ($(dotsl) + (r)$) {$M^{i}$};
            \node (B) at ($(A) + (r)$) {$M^{i+1}$};
            \node (C) at ($(B) + (r)$) {$M^{i+2}$};
            \node (D) at ($(C) + (r)$) {$M^{i+3}$};
            \node (dotsr) at ($(D) + (r)$) {$\dots$};
            
            \draw[->,thick] (dotsl) -- (A);
            \draw[->,thick] (A) --node[above]{$\partial^i$} (B);
            \draw[->,thick] (B) --node[above]{$\partial^{i+1}$} (C);
            \draw[->,thick] (C) --node[above]{$\partial^{i+2}$} (D);
            \draw[->,thick] (D) -- (dotsr);
          \end{tikzpicture}
\end{center}
in which each object $M^i$ is equipped with a chain of subobjects
\[
 M^i \supseteq \dots \supseteq F^{j}M^i \supseteq F^{j+1}M^i \supseteq F^{j+2}M^i \supseteq \dots
\]
compatible with the differentials, i.e., $\partial^i$ restricts to a morphism
\[
 F^j\partial^i: F^jM^i \longrightarrow F^jM^{i+1}
\]
for every $i,j \in \Z$.
To simplify our explanation, we will concentrate on a finite excerpt of
such a filtered cochain complex, and denote it as follows:
\begin{center}
          \begin{tikzpicture}[transform shape,mylabel/.style={thick, draw=black, align=center, minimum width=0.5cm, minimum height=0.5cm,fill=white}]
            \coordinate (r) at (2.5,0);
            \coordinate (u) at (0,2);
            \node (dotsl) {$\dots$};
            \node (A) at ($(dotsl) + (r)$) {$A$};
            \node (B) at ($(A) + (r)$) {$B$};
            \node (C) at ($(B) + (r)$) {$C$};
            \node (D) at ($(C) + (r)$) {$D$};
            \node (dotsr) at ($(D) + (r)$) {$\dots$};
            
            \draw[->,thick] (dotsl) -- (A);
            \draw[->,thick] (A) --node[above]{$\partial^A$} (B);
            \draw[->,thick] (B) --node[above]{$\partial^{B}$} (C);
            \draw[->,thick] (C) --node[above]{$\partial^{C}$} (D);
            \draw[->,thick] (D) -- (dotsr);
          \end{tikzpicture}
\end{center}
with chain of subobjects
\[
 A \supseteq \dots \supseteq A^{j} \supseteq A^{j+1} \supseteq A^{j+2} \supseteq \dots
\]
and likewise for $B$, $C$, and $D$. The restrictions of the differentials to the $j$-th subobjects are
denoted by adding an extra index, e.g., $\partial^{A,j}: A^j \longrightarrow B^j$.

For every $j \in \Z$, 
we can restrict our filtered cochain complex to its $j$-th graded part 
and again obtain a cochain complex:
\begin{center}
          \begin{tikzpicture}[transform shape,mylabel/.style={thick, draw=black, align=center, minimum width=0.5cm, minimum height=0.5cm,fill=white}]
            \coordinate (r) at (2.5,0);
            \coordinate (u) at (0,2);
            \node (dotsl) {$\dots$};
            \node (A) at ($(dotsl) + (r)$) {$\frac{A^{j}}{A^{j+1}}$};
            \node (B) at ($(A) + (r)$) {$\frac{B^{j}}{B^{j+1}}$};
            \node (C) at ($(B) + (r)$) {$\frac{C^{j}}{C^{j+1}}$};
            \node (D) at ($(C) + (r)$) {$\frac{D^{j}}{D^{j+1}}$};
            \node (dotsr) at ($(D) + (r)$) {$\dots$};
            
            \draw[->,thick] (dotsl) -- (A);
            \draw[->,thick] (A) --node[above]{$\overline{\partial^{A,j}}$} (B);
            \draw[->,thick] (B) --node[above]{$\overline{\partial^{B,j}}$} (C);
            \draw[->,thick] (C) --node[above]{$\overline{\partial^{C,j}}$} (D);
            \draw[->,thick] (D) -- (dotsr);
          \end{tikzpicture}
\end{center}
It is the common convention to arrange this $\Z$-indexed family of cochain complexes
between the graded parts as follows:
\begin{center}
          \begin{tikzpicture}[transform shape,mylabel/.style={thick, draw=black, align=center, minimum width=0.5cm, minimum height=0.5cm,fill=white}]
            \coordinate (r) at (2.5,0);
            \coordinate (u) at (0,2);
            \node (A11) {};
            \node (A12) at ($(A11) + (r)$) {$\dots$};
            \node (A13) at ($(A12) + (r)$) {};
            \node (A14) at ($(A13) + (r)$) {};
            \node (A15) at ($(A14) + (r)$) {};
            
            \node (A21) at ($(A11) - (u)$) {$\dots$};
            \node (A22) at ($(A21) + (r)$) {$\frac{D^j}{D^{j+1}}$};
            \node (A23) at ($(A22) + (r)$) {$\dots$};
            \node (A24) at ($(A23) + (r)$) {};
            \node (A25) at ($(A24) + (r)$) {};
            
            \node (A31) at ($(A21) - (u)$) {$\dots$};
            \node (A32) at ($(A31) + (r)$) {$\frac{C^j}{C^{j+1}}$};
            \node (A33) at ($(A32) + (r)$) {$\frac{D^{j+1}}{D^{j+2}}$};
            \node (A34) at ($(A33) + (r)$) {$\dots$};
            \node (A35) at ($(A34) + (r)$) {};
            
            \node (A41) at ($(A31) - (u)$) {$\dots$};
            \node (A42) at ($(A41) + (r)$) {$\frac{B^j}{B^{j+1}}$};
            \node (A43) at ($(A42) + (r)$) {$\frac{C^{j+1}}{C^{j+2}}$};
            \node (A44) at ($(A43) + (r)$) {$\frac{D^{j+2}}{D^{j+3}}$};
            \node (A45) at ($(A44) + (r)$) {$\dots$};
            
            \node (A51) at ($(A41) - (u)$) {$\dots$};
            \node (A52) at ($(A51) + (r)$) {$\frac{A^j}{A^{j+1}}$};
            \node (A53) at ($(A52) + (r)$) {$\frac{B^{j+1}}{B^{j+2}}$};
            \node (A54) at ($(A53) + (r)$) {$\frac{C^{j+2}}{C^{j+3}}$};
            \node (A55) at ($(A54) + (r)$) {$\dots$};
            
            \node (A61) at ($(A51) - (u)$) {};
            \node (A62) at ($(A61) + (r)$) {$\dots$};
            \node (A63) at ($(A62) + (r)$) {$\frac{A^{j+1}}{A^{j+2}}$};
            \node (A64) at ($(A63) + (r)$) {$\frac{B^{j+2}}{B^{j+3}}$};
            \node (A65) at ($(A64) + (r)$) {$\dots$};
            
            \node (A71) at ($(A61) - (u)$) {};
            \node (A72) at ($(A71) + (r)$) {};
            \node (A73) at ($(A72) + (r)$) {$\dots$};
            \node (A74) at ($(A73) + (r)$) {$\frac{A^{j+2}}{A^{j+3}}$};
            \node (A75) at ($(A74) + (r)$) {$\dots$};
            
            \node (A81) at ($(A71) - (u)$) {};
            \node (A82) at ($(A81) + (r)$) {};
            \node (A83) at ($(A82) + (r)$) {};
            \node (A84) at ($(A83) + (r)$) {$\dots$};
            \node (A85) at ($(A84) + (r)$) {};
            
            \draw[->,thick] (A22) -- (A12);
            \draw[->,thick] (A32) -- (A22);
            \draw[->,thick] (A42) -- (A32);
            \draw[->,thick] (A52) -- (A42);
            \draw[->,thick] (A62) -- (A52);
            
            \draw[->,thick] (A33) -- (A23);
            \draw[->,thick] (A43) -- (A33);
            \draw[->,thick] (A53) -- (A43);
            \draw[->,thick] (A63) -- (A53);
            \draw[->,thick] (A73) -- (A63);
            
            \draw[->,thick] (A44) -- (A34);
            \draw[->,thick] (A54) -- (A44);
            \draw[->,thick] (A64) -- (A54);
            \draw[->,thick] (A74) -- (A64);
            \draw[->,thick] (A84) -- (A74);
          \end{tikzpicture}
\end{center}

Let us take a closer look at the induced differentials $\overline{\partial^{A,j}}$.
They fit into a commutative diagram
\begin{center}
          \begin{tikzpicture}[transform shape,mylabel/.style={thick, draw=black, align=center, minimum width=0.5cm, minimum height=0.5cm,fill=white}]
            \coordinate (r) at (2.5,0);
            \coordinate (u) at (0,2);
            \node (A) {$A$};
            \node (B) at ($(A) + (r)$) {$B$};
            \node (Aj) at ($(A) - (u)$) {$A^j$};
            \node (Bj) at ($(B) - (u)$) {$B^j$};
            \node (Ajp) at ($(Aj) - (u)$) {$\frac{A^j}{A^{j+1}}$};
            \node (Bjp) at ($(Bj) - (u)$) {$\frac{B^j}{B^{j+1}}$};
            
            \draw[->,thick] (A) --node[above]{$\partial^A$} (B);
            \draw[->,thick] (Aj) --node[above]{$\partial^{A,j}$} (Bj);
            \draw[->,thick] (Ajp) --node[above]{$\overline{\partial^{A,j}}$} (Bjp);
            
            \draw[->>,thick] (Aj) --node[left]{$\epsilon^{A,j}$} (Ajp);
            \draw[->>,thick] (Bj) --node[right]{$\epsilon^{B,j}$} (Bjp);
            
            \draw[left hook->,thick] (Aj) --node[left]{$\iota^{A,j}$} (A);
            \draw[left hook->,thick] (Bj) --node[right]{$\iota^{B,j}$} (B);
          \end{tikzpicture}
\end{center}
which shows, using Corollary \ref{corollary:liftcolift_formula}, that
we may express $\overline{\partial^{A,j}}$ as a composition of generalized morphisms,
following the outer path from $\frac{A^j}{A^{j+1}}$ to $\frac{B^j}{B^{j+1}}$ in the diagram above:
\[
 [ \overline{\partial^{A,j}} ] 
 =
 [ \epsilon^{A,j} ]^{-1} \cdot [ \iota^{A,j} ] \cdot [ \partial^{A} ] \cdot [ \iota^{B,j} ]^{-1} \cdot [ \epsilon^{B,j} ].
\]
To simplify this expression, let us introduce
\[
 \emb^{A,j} \coloneqq  [ \epsilon^{A,j} ]^{-1} \cdot [ \iota^{A,j} ]: \frac{A^j}{A^{j+1}} \dashrightarrow A
\]
as notation for the \textbf{generalized subquotient embedding}
and
\[
 \proj^{B,j} \coloneqq  [ \iota^{B,j} ]^{-1} \cdot [ \epsilon^{B,j} ]: B \dashrightarrow \frac{B^j}{B^{j+1}}
\]
as notation for the \textbf{generalized subquotient projection}.
Then, the induced morphism between graded parts is literally given
by restricting $\partial^{A}: A \rightarrow B$ to the appropriate subquotients:
\[
 [ \overline{\partial^{A,j}} ] 
 =
 \emb^{A,j} \cdot [ \partial^{A} ] \cdot \proj^{B,j}.
\]
Now, the main idea behind spectral sequences is that too much information is lost
when we only focus on restrictions of $\partial^{A}$ to subquotients of the same index $j$,
and thus, we should try and see what happens if we increase the index of the projection by $1$:
\[
 \partial_1^{A,j}
 \coloneqq
 \emb^{A,j} \cdot [ \partial^{A} ] \cdot \proj^{B,j+1}.
\]
In general, we cannot expect this generalized morphism
to be honest anymore and so we depict it with a dashed arrow
\[
 \partial_1^{A,j}: \frac{A^j}{A^{j+1}} \dashrightarrow \frac{B^{j+1}}{B^{j+2}}.
\]
We can assemble these generalized differentials within 
a structure that we would like to call a \emph{generalized} cochain complex:

\begin{equation}\label{equation:term5}
\begin{tikzpicture}[baseline = (A)]
  \coordinate (r) at (2.5,0);
  \coordinate (u) at (0,2);
  \node (dotsl) {$\dots$};
  \node (A) at ($(dotsl) + (r)$) {$\frac{A^{j}}{A^{j+1}}$};
  \node (B) at ($(A) + (r)$) {$\frac{B^{j+1}}{B^{j+2}}$};
  \node (C) at ($(B) + (r)$) {$\frac{C^{j+2}}{C^{j+3}}$};
  \node (D) at ($(C) + (r)$) {$\frac{D^{j+3}}{D^{j+4}}$};
  \node (dotsr) at ($(D) + (r)$) {$\dots$};
  
  \draw[->,dashed,thick] (dotsl) -- (A);
  \draw[->,dashed,thick] (A) --node[above]{$\partial_1^{A,j}$} (B);
  \draw[->,dashed,thick] (B) --node[above]{$\partial_1^{B,j+1}$} (C);
  \draw[->,dashed,thick] (C) --node[above]{$\partial_1^{C,j+2}$} (D);
  \draw[->,dashed,thick] (D) -- (dotsr);
\end{tikzpicture} 
\end{equation}

\begin{definition}
We define a \textbf{generalized cochain complex} to be a $\Z$-indexed family
of objects $M^i$ together with a $\Z$-indexed family of \emph{generalized} morphisms
\[
  \partial^i: M^i \dashrightarrow M^{i+1}
\]
such that
\[
\generalizedimage( \partial^i ) \subseteq \gkernel( \partial^{i+1} ).
\]
\end{definition}

We show that two consecutive morphisms in \eqref{equation:term5},
e.g., $\partial_1^{A,j}$ and $\partial_1^{B,j+1}$, satisfy
\begin{equation}\label{equation:term4}
 \generalizedimage( \partial_1^{A,j} ) \subseteq \gkernel( \partial_1^{B,j+1} ).
\end{equation}
Indeed, we can calculate
\[
 \generalizedimage( \partial_1^{A,j} ) = \frac{ \big(\partial^A(A^j) \cap B^{j+1}\big) + B^{j+2}}{B^{j+2}}
\]
and
\[
\gkernel( \partial_1^{B,j+1} ) = \frac{ \big((\partial^B)^{-1}(C^{j+3}) \cap B^{j+1}\big) + B^{j+2}}{B^{j+2}}
\]
where we use standard notation for dealing with subobjects in abelian categories, i.e., $\cap$ and $(-)^{-1}$ are shorthand
for the corresponding pullbacks, and $+$ for the join of subobjects.
Since
\[
 \partial^A(A^j) \subseteq \image( \partial^A ) \subseteq \kernel(\partial^B) = (\partial^B)^{-1}(0) \subseteq (\partial^B)^{-1}(C^{j+3})
\]
we really get our desired inclusion \eqref{equation:term4}. Thus, \eqref{equation:term5}
forms a generalized cochain complex.

The whole collection of generalized cochain complexes that we get in this way may be depicted as follows:
\begin{center}
          \begin{tikzpicture}[transform shape,mylabel/.style={thick, draw=black, align=center, minimum width=0.5cm, minimum height=0.5cm,fill=white}]
            \coordinate (r) at (2.5,0);
            \coordinate (u) at (0,2);
            \node (A11) {};
            \node (A12) at ($(A11) + (r)$) {$\dots$};
            \node (A13) at ($(A12) + (r)$) {};
            \node (A14) at ($(A13) + (r)$) {};
            \node (A15) at ($(A14) + (r)$) {};
            
            \node (A21) at ($(A11) - (u)$) {$\dots$};
            \node (A22) at ($(A21) + (r)$) {$\frac{D^j}{D^{j+1}}$};
            \node (A23) at ($(A22) + (r)$) {$\dots$};
            \node (A24) at ($(A23) + (r)$) {};
            \node (A25) at ($(A24) + (r)$) {};
            
            \node (A31) at ($(A21) - (u)$) {$\dots$};
            \node (A32) at ($(A31) + (r)$) {$\frac{C^j}{C^{j+1}}$};
            \node (A33) at ($(A32) + (r)$) {$\frac{D^{j+1}}{D^{j+2}}$};
            \node (A34) at ($(A33) + (r)$) {$\dots$};
            \node (A35) at ($(A34) + (r)$) {};
            
            \node (A41) at ($(A31) - (u)$) {$\dots$};
            \node (A42) at ($(A41) + (r)$) {$\frac{B^j}{B^{j+1}}$};
            \node (A43) at ($(A42) + (r)$) {$\frac{C^{j+1}}{C^{j+2}}$};
            \node (A44) at ($(A43) + (r)$) {$\frac{D^{j+2}}{D^{j+3}}$};
            \node (A45) at ($(A44) + (r)$) {$\dots$};
            
            \node (A51) at ($(A41) - (u)$) {$\dots$};
            \node (A52) at ($(A51) + (r)$) {$\frac{A^j}{A^{j+1}}$};
            \node (A53) at ($(A52) + (r)$) {$\frac{B^{j+1}}{B^{j+2}}$};
            \node (A54) at ($(A53) + (r)$) {$\frac{C^{j+2}}{C^{j+3}}$};
            \node (A55) at ($(A54) + (r)$) {$\dots$};
            
            \node (A61) at ($(A51) - (u)$) {};
            \node (A62) at ($(A61) + (r)$) {$\dots$};
            \node (A63) at ($(A62) + (r)$) {$\frac{A^{j+1}}{A^{j+2}}$};
            \node (A64) at ($(A63) + (r)$) {$\frac{B^{j+2}}{B^{j+3}}$};
            \node (A65) at ($(A64) + (r)$) {$\dots$};
            
            \node (A71) at ($(A61) - (u)$) {};
            \node (A72) at ($(A71) + (r)$) {};
            \node (A73) at ($(A72) + (r)$) {$\dots$};
            \node (A74) at ($(A73) + (r)$) {$\frac{A^{j+2}}{A^{j+3}}$};
            \node (A75) at ($(A74) + (r)$) {$\dots$};
            
            \node (A81) at ($(A71) - (u)$) {};
            \node (A82) at ($(A81) + (r)$) {};
            \node (A83) at ($(A82) + (r)$) {};
            \node (A84) at ($(A83) + (r)$) {$\dots$};
            \node (A85) at ($(A84) + (r)$) {};
            
            \draw[->,dashed,thick] (A21) -- (A22);
            \draw[->,dashed,thick] (A22) -- (A23);
            
            \draw[->,dashed,thick] (A31) -- (A32);
            \draw[->,dashed,thick] (A32) -- (A33);
            \draw[->,dashed,thick] (A33) -- (A34);

            \draw[->,dashed,thick] (A41) -- (A42);
            \draw[->,dashed,thick] (A42) -- (A43);
            \draw[->,dashed,thick] (A43) -- (A44);
            \draw[->,dashed,thick] (A44) -- (A45);

            \draw[->,dashed,thick] (A51) -- (A52);
            \draw[->,dashed,thick] (A52) -- (A53);
            \draw[->,dashed,thick] (A53) -- (A54);
            \draw[->,dashed,thick] (A54) -- (A55);
            
            \draw[->,dashed,thick] (A62) -- (A63);
            \draw[->,dashed,thick] (A63) -- (A64);
            \draw[->,dashed,thick] (A64) -- (A65);
            
            \draw[->,dashed,thick] (A73) -- (A74);
            \draw[->,dashed,thick] (A74) -- (A75);
            
          \end{tikzpicture}
\end{center}
Increasing the index of the projection by $2$ would yield
the following picture (again of generalized cochain complexes):
\begin{center}
          \begin{tikzpicture}[transform shape,mylabel/.style={thick, draw=black, align=center, minimum width=0.5cm, minimum height=0.5cm,fill=white}]
            \coordinate (r) at (2.5,0);
            \coordinate (u) at (0,2);
            \node (A11) {};
            \node (A12) at ($(A11) + (r)$) {$\dots$};
            \node (A13) at ($(A12) + (r)$) {};
            \node (A14) at ($(A13) + (r)$) {};
            \node (A15) at ($(A14) + (r)$) {};
            
            \node (A21) at ($(A11) - (u)$) {$\dots$};
            \node (A22) at ($(A21) + (r)$) {$\frac{D^j}{D^{j+1}}$};
            \node (A23) at ($(A22) + (r)$) {$\dots$};
            \node (A24) at ($(A23) + (r)$) {};
            \node (A25) at ($(A24) + (r)$) {};
            
            \node (A31) at ($(A21) - (u)$) {$\dots$};
            \node (A32) at ($(A31) + (r)$) {$\frac{C^j}{C^{j+1}}$};
            \node (A33) at ($(A32) + (r)$) {$\frac{D^{j+1}}{D^{j+2}}$};
            \node (A34) at ($(A33) + (r)$) {$\dots$};
            \node (A35) at ($(A34) + (r)$) {};
            
            \node (A41) at ($(A31) - (u)$) {$\dots$};
            \node (A42) at ($(A41) + (r)$) {$\frac{B^j}{B^{j+1}}$};
            \node (A43) at ($(A42) + (r)$) {$\frac{C^{j+1}}{C^{j+2}}$};
            \node (A44) at ($(A43) + (r)$) {$\frac{D^{j+2}}{D^{j+3}}$};
            \node (A45) at ($(A44) + (r)$) {$\dots$};
            
            \node (A51) at ($(A41) - (u)$) {$\dots$};
            \node (A52) at ($(A51) + (r)$) {$\frac{A^j}{A^{j+1}}$};
            \node (A53) at ($(A52) + (r)$) {$\frac{B^{j+1}}{B^{j+2}}$};
            \node (A54) at ($(A53) + (r)$) {$\frac{C^{j+2}}{C^{j+3}}$};
            \node (A55) at ($(A54) + (r)$) {$\dots$};
            
            \node (A61) at ($(A51) - (u)$) {};
            \node (A62) at ($(A61) + (r)$) {$\dots$};
            \node (A63) at ($(A62) + (r)$) {$\frac{A^{j+1}}{A^{j+2}}$};
            \node (A64) at ($(A63) + (r)$) {$\frac{B^{j+2}}{B^{j+3}}$};
            \node (A65) at ($(A64) + (r)$) {$\dots$};
            
            \node (A71) at ($(A61) - (u)$) {};
            \node (A72) at ($(A71) + (r)$) {};
            \node (A73) at ($(A72) + (r)$) {$\dots$};
            \node (A74) at ($(A73) + (r)$) {$\frac{A^{j+2}}{A^{j+3}}$};
            \node (A75) at ($(A74) + (r)$) {$\dots$};
            
            \node (A81) at ($(A71) - (u)$) {};
            \node (A82) at ($(A81) + (r)$) {};
            \node (A83) at ($(A82) + (r)$) {};
            \node (A84) at ($(A83) + (r)$) {$\dots$};
            \node (A85) at ($(A84) + (r)$) {};
            \draw[->,dashed,thick] (A21) -- (A33);
            \draw[->,dashed,thick] (A22) -- (A34);
            
            \draw[->,dashed,thick] (A31) -- (A43);
            \draw[->,dashed,thick] (A32) -- (A44);
            \draw[->,dashed,thick] (A33) -- (A45);
            \draw[->,dashed,thick] (A41) -- (A53);
            \draw[->,dashed,thick] (A42) -- (A54);
            \draw[->,dashed,thick] (A43) -- (A55);
%
            \draw[->,dashed,thick] (A52) -- (A64);
            \draw[->,dashed,thick] (A53) -- (A65);
%
            \draw[->,dashed,thick] (A63) -- (A75);
%
%
          \end{tikzpicture}
\end{center}

It follows that we are able to construct for every integer $i \geq 0$,
and not only for the case $i = 0$,
a $\Z$-indexed family of \emph{generalized} cochain complexes

\begin{equation}\label{equation:generalized_complexes}
 \begin{tikzpicture}[baseline = (A)]
  \coordinate (r) at (2.5,0);
  \coordinate (u) at (0,2);
  \node (dotsl) {$\dots$};
  \node (A) at ($(dotsl) + (r)$) {$\frac{A^{j}}{A^{j+1}}$};
  \node (B) at ($(A) + (r)$) {$\frac{B^{j+i}}{B^{j+i+1}}$};
  \node (C) at ($(B) + (r)$) {$\frac{C^{j+2i}}{C^{j+2i + 1}}$};
  \node (D) at ($(C) + (r)$) {$\frac{D^{j+3i}}{D^{j+3i+1}}$};
  \node (dotsr) at ($(D) + (r)$) {$\dots$};
  
  \draw[->,dashed,thick] (dotsl) -- (A);
  \draw[->,dashed,thick] (A) --node[above]{$\partial_i^{A,j}$} (B);
  \draw[->,dashed,thick] (B) --node[above]{$\partial_i^{B,j+i}$} (C);
  \draw[->,dashed,thick] (C) --node[above]{$\partial_i^{C,j+2i}$} (D);
  \draw[->,dashed,thick] (D) -- (dotsr);
\end{tikzpicture}
\end{equation}
Next, we will see how to produce from a generalized cochain complex
an ordinary cochain complex having honest differentials.
Applying this process to the just created generalized cochain complexes
will then yield our desired spectral sequence.

So, let
\begin{center}
          \begin{tikzpicture}[transform shape,mylabel/.style={thick, draw=black, align=center, minimum width=0.5cm, minimum height=0.5cm,fill=white}]
            \coordinate (r) at (2.5,0);
            \coordinate (u) at (0,2);
            \node (dotsl) {$\dots$};
            \node (A) at ($(dotsl) + (r)$) {$M^{i}$};
            \node (B) at ($(A) + (r)$) {$M^{i+1}$};
            \node (C) at ($(B) + (r)$) {$M^{i+2}$};
            \node (D) at ($(C) + (r)$) {$M^{i+3}$};
            \node (dotsr) at ($(D) + (r)$) {$\dots$};
            
            \draw[->,thick,dashed] (dotsl) -- (A);
            \draw[->,thick,dashed] (A) --node[above]{$\partial^i$} (B);
            \draw[->,thick,dashed] (B) --node[above]{$\partial^{i+1}$} (C);
            \draw[->,thick,dashed] (C) --node[above]{$\partial^{i+2}$} (D);
            \draw[->,thick,dashed] (D) -- (dotsr);
          \end{tikzpicture}
\end{center}
be an arbitrary generalized cochain complex.
Since we have
\[
 \generalizedimage( \partial^i ) \subseteq \gkernel( \partial^{i+1} ),
\]
we also have
\[
 \defect( \partial^i ) \subseteq \generalizedimage( \partial^i ) \subseteq \gkernel( \partial^{i+1} ) \subseteq \domain( \partial^{i+1} ) .
\]
We apply the generalized homomorphism theorem (see Subsection \ref{subsection:gen_hom_thm})
to the generalized morphisms $\partial^i$
in order to produce honest morphisms $d^i$ fitting in the following
commutative diagram:
\begin{center}
          \begin{tikzpicture}[transform shape,mylabel/.style={thick, draw=black, align=center, minimum width=0.5cm, minimum height=0.5cm,fill=white}]
            \coordinate (r) at (1.9,0);
            \coordinate (d) at (0,-2);
            \node (M11) {$\frac{\domain( \partial^{i+1} )}{\defect( \partial^i )}$};
            \node (M14) at ($(M11) + 3*(r)$) {$\frac{\domain( \partial^{i+2} )}{\defect( \partial^{i+1} )}$};
            \node (M17) at ($(M14) + 3*(r)$) {$\frac{\domain( \partial^{i+3} )}{\defect( \partial^{i+2} )}$};
            
            \node (M22) at ($(M11) + 0.75*(r) + (d)$) {$\frac{\domain( \partial^{i+1} )}{\gkernel( \partial^{i+1} )}$};
            \node (M23) at ($(M14) - 0.75*(r) + (d)$) {$\frac{\generalizedimage( \partial^{i+1} )}{\defect( \partial^{i+1} )}$};
            \node (M25) at ($(M14) + 0.75*(r) + (d)$) {$\frac{\domain( \partial^{i+2} )}{\gkernel( \partial^{i+2} )}$};
            \node (M26) at ($(M17) - 0.75*(r) + (d)$) {$\frac{\generalizedimage( \partial^{i+2} )}{\defect( \partial^{i+2} )}$};
            
            \node (M32) at ($(M22) + (d)$) {$M^{i+1}$};
            \node (M33) at ($(M23) + (d)$) {$M^{i+2}$};
            \node (M35) at ($(M25) + (d)$) {$M^{i+2}$};
            \node (M36) at ($(M26) + (d)$) {$M^{i+3}$};
            
            \node (dotsl) at ($(M11) - (r)$) {$\dots$};
            \node (dotsr) at ($(M17) + (r)$) {$\dots$};
            
            \draw[->,thick] (M11) --node[above]{$d^{i+1}$} (M14);
            \draw[->,thick] (M14) --node[above]{$d^{i+2}$} (M17);
            
            \draw[->>,thick] (M11) -- (M22);
            \draw[->,thick] (M22) --node[above]{$\widetilde{\partial^{i+1}}$} (M23);
            \draw[right hook->,thick] (M23) -- (M14);
            
            \draw[->>,thick] (M14) -- (M25);
            \draw[->,thick] (M25) --node[above]{$\widetilde{\partial^{i+2}}$} (M26);
            \draw[right hook->,thick] (M26) -- (M17);
            
            \draw[->>,thick,dashed] (M32) -- (M22);
            \draw[->,thick,dashed] (M32) --node[above]{$\partial^{i+1}$} (M33);
            \draw[right hook->,thick,dashed] (M23) -- (M33);
            
            \draw[->>,thick,dashed] (M35) -- (M25);
            \draw[->,thick,dashed] (M35) --node[above]{$\partial^{i+1}$} (M36);
            \draw[right hook->,thick,dashed] (M26) -- (M36);
            
            \draw[->,thick] (dotsl) -- (M11);
            \draw[->,thick] (M17) -- (dotsr);
            
            \draw[draw = none] (M33) --node[yshift = -0.1em]{$=$} (M35);
            
            \draw[->,thick] (M23) --node[above]{$0$} (M25);
          \end{tikzpicture}
\end{center}
We can directly read off the equation
\[
 d^{i+1} \cdot d^{i+2} = 0.
\]
The collection of the $d^i$ is what we call the
\textbf{associated honest cochain complex} of the generalized cochain complex given by the $\partial^i$.
Note that the rectangles of the above diagram
\begin{center}
          \begin{tikzpicture}[transform shape,mylabel/.style={thick, draw=black, align=center, minimum width=0.5cm, minimum height=0.5cm,fill=white}]
            \coordinate (r) at (2,0);
            \coordinate (d) at (0,-2);
            \node (M11) {$\frac{\domain( \partial^{i+1} )}{\defect( \partial^i )}$};
            \node (M14) at ($(M11) + 3*(r)$) {$\frac{\domain( \partial^{i+2} )}{\defect( \partial^{i+1} )}$};
            
            \node (M22) at ($(M11) + 0.75*(r) + (d)$) {$\frac{\domain( \partial^{i+1} )}{\gkernel( \partial^{i+1} )}$};
            \node (M23) at ($(M14) - 0.75*(r) + (d)$) {$\frac{\generalizedimage( \partial^{i+1} )}{\defect( \partial^{i+1} )}$};
            
            \draw[->,thick] (M11) --node[above]{$d^{i+1}$} (M14);
            \draw[->>,thick] (M11) -- (M22);
            \draw[->,thick] (M22) --node[above]{$\widetilde{\partial^{i+1}}$} (M23);
            \draw[right hook->,thick] (M23) -- (M14);
          \end{tikzpicture}
\end{center}
are actually decompositions of the $d^{i+1}$ in the sense of the homomorphism theorem, since $\widetilde{\partial^{i+1}}$ is an isomorphism.
But then it follows that
\[
 \kernel( d^{i+1} ) = \frac{\gkernel( \partial^{i+1} )}{\defect( \partial^i )}
\]
and
\[
 \image( d^{i+1} ) = \frac{\generalizedimage( \partial^{i+1} )}{\defect( \partial^{i+1} )}.
\]
In particular, we can compute the cohomologies of the associated honest cochain complex $d^{\bullet}$
in terms of $\partial^{\bullet}$:
\begin{equation}\label{equation:term6}
 \CH^{i+1}( d^{\bullet} ) \simeq \frac{\gkernel( \partial^{i+1} )}{\generalizedimage( \partial^{i} )}.
\end{equation}

Now, let us go back to our generalized cochain complexes \eqref{equation:generalized_complexes}.
As we have learned in \eqref{equation:term6}, computing the cohomologies of their associated honest cochain complexes
boils down to the computation of generalized images and generalized kernels, for which we have:
\[
 \generalizedimage( \partial_i^{A,j} ) = \frac{ \big(\partial^A(A^j) \cap B^{j+i}\big) + B^{j+i+1}}{B^{j+i+1}}
\]
and
\[
\gkernel( \partial_i^{B,j+1} ) = \frac{ \big((\partial^B)^{-1}(C^{j+2i+1}) \cap B^{j+i}\big) + B^{j+i+1}}{B^{j+i+1}}.
\]
Computing the remaining two canonical subobjects can be performed analogously and yields
\[
 \defect( \partial_i^{A,j} ) = \frac{ \big(\partial^A(A^{j+1}) \cap B^{j+i}\big) + B^{j+i+1}}{B^{j+i+1}}
\]
and
\[
\domain( \partial_i^{B,j+1} ) = \frac{ \big((\partial^B)^{-1}(C^{j+2i}) \cap B^{j+i}\big) + B^{j+i+1}}{B^{j+i+1}}.
\]
But from this, we can deduce by a simple variable substitution
\[
 \gkernel( \partial_i^{B,j+1} ) = \domain( \partial_{i+1}^{B,j} )
\]
and
\[
 \generalizedimage( \partial_i^{A,j} ) = \defect( \partial_{i+1}^{A,j-1} ).
\]
In particular, we deduce
\[
 \frac{\gkernel( \partial_i^{B,j+1} )}{\generalizedimage( \partial_i^{A,j} )} \simeq \frac{\domain( \partial_{i+1}^{B,j} )}{\defect( \partial_{i+1}^{A,j-1} )}.
\]
Putting these information together, it follows that
the cohomologies of the $i$-th associated honest cochain complexes
determine the objects of the $(i+1)$-th associated honest cochain complexes.
This is exactly the defining feature of a spectral sequence,
which we are going to define now.

\begin{definition}\label{definition:category_of_spectral_sequences}
  A \bfindex{cohomological spectral sequence}
  (starting at $0$) consists of the following data: For all $p,q \in \Z$, $r \geq 0$,
  we have:
  \begin{enumerate}
   \item objects $E_r^{p,q} \in \AC$,
   \item morphisms $d_{r}^{p,q}: E_r^{p,q} \longrightarrow E_r^{p+r,q-(r-1)} \in \AC$,
   \item isomorphisms
         $\iota_r^{p,q}: E_{r+1}^{p,q} \xlongrightarrow{\sim} \frac{\kernel( d_{r}^{p,q} )}{\image( d_{r}^{p-r,q+(r-1)} )}$,
   \item the equation $ d_{r}^{p,q}\cdot d_{r}^{p+r,q-(r-1)}  = 0$ holds.
  \end{enumerate}
\end{definition}

From the discussion in this subsection, it follows that
if we are given a filtered cochain complex
\begin{center}
          \begin{tikzpicture}[transform shape,mylabel/.style={thick, draw=black, align=center, minimum width=0.5cm, minimum height=0.5cm,fill=white}]
            \coordinate (r) at (2.5,0);
            \coordinate (u) at (0,2);
            \node (dotsl) {$\dots$};
            \node (A) at ($(dotsl) + (r)$) {$M^{i}$};
            \node (B) at ($(A) + (r)$) {$M^{i+1}$};
            \node (C) at ($(B) + (r)$) {$M^{i+2}$};
            \node (D) at ($(C) + (r)$) {$M^{i+3}$};
            \node (dotsr) at ($(D) + (r)$) {$\dots$};
            
            \draw[->,thick] (dotsl) -- (A);
            \draw[->,thick] (A) --node[above]{$\partial^i$} (B);
            \draw[->,thick] (B) --node[above]{$\partial^{i+1}$} (C);
            \draw[->,thick] (C) --node[above]{$\partial^{i+1}$} (D);
            \draw[->,thick] (D) -- (dotsr);
          \end{tikzpicture}
\end{center}
then we can construct a spectral sequence by first defining the auxiliary data
\[
 E_0^{p,q} \coloneqq \frac{F^{p}M^{p+q}}{F^{p+1}M^{p+q}}
\]
and
\begin{center}
          \begin{tikzpicture}[transform shape,mylabel/.style={thick, draw=black, align=center, minimum width=0.5cm, minimum height=0.5cm,fill=white}]
            \coordinate (r) at (3.5,0);
            \coordinate (u) at (0,2);
            \node (dotsl) {$\partial_r^{p,q}$};
            \node (A) at ($(dotsl) + 0.5*(r)$) {$E_0^{p,q}$};
            \node (B) at ($(A) + (r)$) {$M^{p+q}$};
            \node (C) at ($(B) + (r)$) {$M^{p+q+1}$};
            \node (D) at ($(C) + (r)$) {$E_0^{p+r,q-(r-1)} $};
            
            \draw[draw = none] (dotsl) --node[]{$\coloneqq$} (A);
            \draw[right hook->,thick,dashed] (A) --node[above]{$\emb$} (B);
            \draw[->,thick] (B) --node[above]{$\partial^{p+q}$} (C);
            \draw[->>,thick,dashed] (C) --node[above]{$\proj$} (D);
          \end{tikzpicture}
\end{center}
and second constructing the data for the spectral sequence as
\[
 E_r^{p,q} \coloneqq \frac{\domain( \partial_r^{p,q})}{\defect( \partial_r^{p-r,q+(r-1)} )}
\]
and
\begin{center}
          \begin{tikzpicture}[transform shape,mylabel/.style={thick, draw=black, align=center, minimum width=0.5cm, minimum height=0.5cm,fill=white}]
            \coordinate (r) at (3.5,0);
            \coordinate (u) at (0,2);
            \node (dotsl) {$d_r^{p,q}$};
            \node (A) at ($(dotsl) + 0.5*(r)$) {$E_r^{p,q}$};
            \node (B) at ($(A) + (r)$) {$M^{p+q}$};
            \node (C) at ($(B) + (r)$) {$M^{p+q+1}$};
            \node (D) at ($(C) + (r)$) {$E_r^{p+r,q-(r-1)} $.};
            
            \draw[draw = none] (dotsl) --node[]{$\coloneqq$} (A);
            \draw[right hook->,thick,dashed] (A) --node[above]{$\emb$} (B);
            \draw[->,thick] (B) --node[above]{$\partial^{p+q}$} (C);
            \draw[->>,thick,dashed] (C) --node[above]{$\proj$} (D);
          \end{tikzpicture}
\end{center}

Note that all our constructions in this subsection were formulated purely in the language of generalized morphisms.
We have seen that computing with generalized morphisms only involves computations in the underlying abelian category
like taking pushouts and pullbacks.
It follows that we reached our second computational goal: computing the differentials on the pages of a spectral sequence associated to a filtered
cochain complex only with the help of direct computations in the underlying abelian category.

\def\cprime{$'$} \def\cprime{$'$} \def\cprime{$'$} \def\cprime{$'$}
  \def\cprime{$'$}
\providecommand{\bysame}{\leavevmode\hbox to3em{\hrulefill}\thinspace}
\providecommand{\MR}{\relax\ifhmode\unskip\space\fi MR }
\providecommand{\MRhref}[2]{%
  \href{http://www.ams.org/mathscinet-getitem?mr=#1}{#2}
}
\providecommand{\href}[2]{#2}

\end{document}